\newtheorem{proposition}{Proposition}
\newtheorem{theorem}{Theorem}
\newtheorem{lemma}{Lemma}
\newtheorem{cor}{Corollary}
\newcommand{\RR}{\mathbb{R}}
\newcommand{\NN}{\mathbb{N}}
\newcommand{\CC}{\mathbb{C}}
\newcommand{\cdotsc}{,\dotsc,}
\newcommand{\stproca}[1]{\left(#1\right)_{t \ge 0}}
\newcommand{\stproc}[1]{\stproca{#1_t}}
\DeclareMathOperator{\rRe}{Re}
\renewcommand{\Re}{\rRe}
\newcommand{\iu}{\mathrm{i}} % imaginary unit
\newenvironment{eqnarr}{\begin{IEEEeqnarray}{rCl}}{\end{IEEEeqnarray}\ignorespacesafterend}
\renewcommand{\eqref}[1]{\hyperref[#1]{(\ref*{#1})}}
\newcommand{\rhohat}{\hat{\rho}}
\newcommand{\GGt}{(\mathcal{G}_t)_{t \ge 0}}
\DeclareMathOperator{\diag}{diag}
\newcommand{\Had}{\circ}
\newcommand{\dd}{\mathrm{d}}
\newcommand{\for}{\qquad}
\newcommand{\define}{\emph}
\newcommand{\abs}[1]{\lvert #1 \rvert}
    \def\beq{\begin{eqnarr}}%arw
    \def\eeq{\end{eqnarr}}%arw
    \def\beqq{\begin{eqnarray*}} %arw
    \def\eeqq{\end{eqnarray*}} %arw
    \def\d{{\textnormal d}}
\newtheorem{remark}{Remark}
\newcommand*{\pref}[1]{\hyperref[#1]{(\ref*{#1})}}
\newcommand*{\refpref}[2]{\hyperref[#2]{\ref*{#1}(\ref*{#2})}}
\title{Deep  factorisation of the stable process}
\author{Andreas E. Kyprianou\thanks{Department of Mathematical Sciences, University of Bath, Claverton Down, Bath, BA2 7AY, UK. Email: \texttt{a.kyprianou@bath.ac.uk}} 
%\, and 
% Satitkanitkul\thanks{Email: \texttt{ws250@bath.ac.uk}}%\fnms{Curdin} \snm{Ott}\thanksref{m3}\ead[label=e3]%{curdin.ott@bluewin.ch}
}
\begin{document}

\maketitle

\begin{abstract}

The Lamperti--Kiu transformation for real-valued self-similar Markov processes (rssMp) states that, associated to each rssMp via a space-time transformation, there is a Markov additive process (MAP). In the case that the rssMp is taken to be  an $\alpha$-stable process with $\alpha\in(0,2)$, \cite{CPR} and \cite{T_0} have computed explicitly the characteristics of the matrix exponent of the semi-group of the embedded MAP, which we henceforth refer to as the {\it Lamperti-stable MAP}. Specifically, the matrix exponent of the Lamperti-stable MAP's transition semi-group can be written in a compact form using only gamma functions. 

Just as with L\'evy processes, there exists a factorisation of the (matrix) exponents of MAPs, with each of the two factors uniquely characterising the ascending and descending ladder processes, which themselves are again MAPs. 
 To the author's knowledge, not a single  example of such a factorisation currently exists in the literature. 
In this article we provide a completely explicit Wiener--Hopf factorisation for the Lamperti-stable MAP. %As a consequence of our methodology, we also get additional new results concerning space-time invariance properties of stable processes. Accordingly we develop some new fluctuation identities therewith.  

The main value and novelty of exploring the matrix Wiener--Hopf factorisation of the underlying MAP comes about through style of the computational approach. Understanding the fluctuation theory of the underlying MAP offers new insight into different ways of analysing stable processes. Indeed, we obtain  new space-time invariance properties of stable processes, as well as demonstrating examples how new fluctuation identities for stable processes can be developed as a consequence of the reasoning in deriving the matrix Wiener--Hopf  factors. The methodology in this paper has already lead to new applications in the forthcoming work of \cite{Deep2} and \cite{KRS}.

%Our approach appeals to three main techniques. First, information about the the respective individual entries in the matrix factors can be gleaned using asymptotic Markov additive renewal theory in the setting of excursion theory for MAPs. Second, the quantities that are identified in that way can be related to complex first entry problems for domains of the form $D$, where $D$ can be either an interval or the complement of an interval. Third, the aforesaid first entry problems can be simplified by appealing to a version of the Riesz--Bogdan--Zak transform, which relates the conformal mapping of the path of a stable process, together with an endogenous time change, to a Doob $h$-transform of the stable process. For $\alpha\in(1,2)$, this Doob $h$-transform corresponds to conditioning the stable process to avoid the origin, as explored in \cite{CPR}.

\medskip

\noindent {\bf Key words:} Self-similar Markov process, Lamperti--Kiu transform, Markov additive factorisation, matrix Wiener--Hopf factorisation.

\medskip

\noindent {\bf Mathematics Subject Classification:}  60G52, 60G18, 60G51.
\end{abstract}

\section{Introduction}\label{c5_introduction}

Let $X: = (X_t)_{t\geq 0}$ be a one-dimensional L\'evy process,
starting from zero,
with law $\mathbb{P}$. The L\'evy--Khintchine formula
states that, for all 
%$t\geq 0$ and 
$\theta\in \mathbb{R}$,
the characteristic exponent
$\Psi(\theta) : = %t^{-1}
-\log \mathbb{E} ({\rm e}^{\iu\theta X_1})$ satisfies 
\begin{equation}
\Psi(\theta)
  = \iu a\theta 
  + \frac{1}{2}\sigma^2\theta^2 
  + \int_{\mathbb{R}} (1 - {\rm e}^{\iu\theta x} + \iu\theta x\mathbf{1}_{(|x|\leq 1)})\Pi({\rm d} x),\qquad \theta\in\mathbb{R},
\label{charexp}
\end{equation}
where $a\in\mathbb{R}$, $\sigma\geq 0$ and $\Pi$ is a measure
(the \textit{L\'evy measure}) concentrated on
$\mathbb{R}\setminus\{0\}$
such that $\int_{\mathbb{R}}(1\wedge x^2)\Pi({\rm d} x)<\infty$.
When analytical extension is possible, we refer to $\psi(z): = -\Psi(-\iu z)$ as the Laplace exponent.

The process $(X,\mathbb{P})$ is
said to be a \textit{strictly $\alpha$-stable process} (henceforth just written `stable process')
if it is an unkilled L\'evy process which
also satisfies the \textit{scaling property}: under $\mathbb{P}$,
for every $c > 0$,
the process $(cX_{t c^{-\alpha}})_{t \ge 0}$
has the same law as $X$.
It is known that $\alpha$ necessarily belongs to $(0,2]$, and the case $\alpha = 2$
corresponds to Brownian motion, which we exclude.
The L\'evy-Khintchine representation of such a process
is as follows:
$\sigma = 0$, 
$\Pi$ is absolutely continuous with density given by 
\[ 
 \pi(x): =  c_+ x^{-(\alpha+1)} \mathbf{1}_{(x > 0)} + c_- \abs{x}^{-(\alpha+1)} \mathbf{1}_{(x < 0)},
\qquad  x \in \mathbb{R},
\]
where $c_+,\, c_- \ge 0$, and $a = (c_+-c_-)/(\alpha-1)$.

%Up to a multiplicative constant $c>0$,
The process $X$ has the
characteristic exponent
\begin{equation}\label{e:stable CE}
  \Psi(\theta) =
  c\abs{\theta}^\alpha
  (1  - \iu\beta\tan\tfrac{\pi\alpha}{2}\text{sgn}(\theta)), \qquad   \theta\in\mathbb{R},
\end{equation}
where $\beta = (c_+- c_-)/(c_+ + c_-)$ and 
$c = - (c_++c_-)\Gamma(-\alpha)\cos (\pi\alpha/2)$. Self-similarity dictates that we must necessarily have $\beta = 0$ when $\alpha = 1$, which is to say that the process is symmetric. For more details,
see \cite[Theorems 14.10 and 14.15]{Sato}. 

For consistency with the literature that we shall appeal to in this article,
we shall always parametrise our $\alpha$-stable process such that 
\[ c_+ = \Gamma(\alpha+1) \frac{\sin(\pi \alpha \rho)}{\pi} \quad \text{and} \quad
  c_- = \Gamma(\alpha+1) \frac{\sin(\pi \alpha \hat\rho)}{\pi },
  \]
where
$\rho = \mathbb{P}(X_t \ge 0)$ is the positivity parameter, 
and $\hat\rho = 1-\rho$.  In that case, the constant $c$ simplifies to just $c = \cos (\pi\alpha(\rho - 1/2))$. Moreover,   we may also identify the exponent as taking the form 
\begin{equation}\label{Psi_alpha_rho_parameterization}
\Psi(\theta) = |\theta|^\alpha ({\rm e}^{\pi\iu\alpha(\frac{1}{2} -\rho)} \mathbf{1}_{(\theta>0)} + {\rm e}^{-\pi\iu\alpha (\frac{1}{2} - \rho)}\mathbf{1}_{(\theta<0)}), \qquad \theta\in\mathbb{R}.
\end{equation}

With this normalisation, we take the point of view that the class of stable processes is
parametrised by $\alpha$ and $\rho$; the reader will note that
all the quantities above can be written in terms of these parameters.
We shall restrict ourselves a little further within this class
by excluding the possibility of having only one-sided jumps. In particular, this rules out the possibility that $X$ is a subordinator or the negative of a subordinator, which occurs when $\alpha\in(0,1)$ and either $\rho =1$ or $0$. In the case of a subordinator, for future reference, we note that the characteristic exponent takes the form 
$\Psi(\theta) = 
(-\iu\theta)^\alpha
$, $\theta\in\mathbb{R}$, which is the analytic extension of the Bernstein function $\lambda \mapsto \lambda^\alpha$, $\lambda \geq 0$.

A fascinating theoretical feature of all characteristic exponents of L\'evy processes is that they can always be written in terms of the so-called {\it Wiener--Hopf factors}. That is to say, for a given characteristic exponent of a L\'evy process, $\Psi$, there exist unique Bernstein functions, $\kappa$ and $\hat\kappa$ such that, up to a multiplicative constant,
\begin{equation}\label{LWHF}
\Psi(\theta) = \hat\kappa(\iu\theta)\kappa(-\iu\theta), \qquad \theta\in\mathbb{R}.
\end{equation}
As Bernstein functions, $\kappa$ and $\hat\kappa$ can be seen as the exponents of (killed) subordinators. The probabilistic significance of these subordinators, known as the ascending and descending ladder height processes respectively, is that their range corresponds precisely to the range of the running maximum of $X$ and of $-X$ respectively. In this sense, they play an important role in understanding the path fluctuations of the underlying L\'evy processes. In particular, a rich history of literature has shown their fundamental significance in the understanding of a variety first passage problems; see for example their extensive use the the development of fluctuation theory  of L\'evy processes in the texts \cite{BertoinLP} and \cite{Kyp} as well as \cite{Bingham}. 

In the case of stable processes, the Wiener--Hopf factorisation takes a relatively straightforward form. Indeed, it is straightforward to argue that the ascending and descending ladder processes must necessarily be stable subordinators. One is therefore forced to take (up to a multiplicative constants) $\kappa(\lambda)=\lambda^{\alpha_1}$, $\lambda\geq 0$, and $\hat \kappa(\lambda)=\lambda^{\alpha_2}$, $\lambda\geq 0$, for some $\alpha_1, \alpha_2 \in (0,1)$. Comparing (\ref{LWHF}) with \eqref{Psi_alpha_rho_parameterization}, we must choose the parameters $\alpha_1$ and $\alpha_2$ such that, for example, when $z>0$,  
\begin{equation}
z^{\alpha} {\rm e}^{\pi \iu\alpha (\frac{1}{2}-\rho)}=z^{\alpha_1} {\rm e}^{-\frac{1}{2}{\pi \iu\alpha_1}} \times 
z^{\alpha_2} {\rm e}^{\frac{1}{2}{\pi \iu\alpha_2}}.
\label{WHFSproof}
\end{equation}
Matching radial and angular parts, we find that 
\begin{eqnarray}
\begin{cases}
\alpha_1+\alpha_2=\alpha, \\
\alpha_1-\alpha_2=-\alpha(1-2\rho),
\end{cases}
\end{eqnarray}
which gives us $\alpha_1=\alpha \rho$ and $\alpha_2=\alpha\hat\rho$. As we have assumed that $X$ does not have monotone paths, it is necessarily the case   that $0< \alpha\rho\leq 1$ and $0<\alpha\hat\rho\leq 1$. Note also that when $\alpha\rho=1$, the ascending ladder height process is a pure linear drift. In that case, the range of the maximum process $\overline{X}$ is $[0,\infty)$. This can only happen when  $X$ is spectrally negative which  has been ruled out by assumption in the introduction. Similarly the case that $\alpha\hat\rho=1$ corresponds to spectral positivity which has also been  ruled out by assumption.
In conclusion, 
\[
\kappa(\lambda) = \lambda^{\alpha\rho}\text{ and }\hat\kappa(\lambda) = \lambda^{\alpha\hat\rho}, \qquad \lambda \geq 0
\]
where $0<\alpha\rho,\alpha\hat\rho<1$. 

This discussion also helps us justify that, taking account of all the special cases of stable processes that we have chosen to exclude, the set of admissible parameters  we are left to work with is
\[
%  \mathcal{A}: &=&
  \bigl\{ (\alpha,\rho) : \alpha \in (0,2), \, \rho \in (1-1/\alpha, 1/\alpha)
 \text{ and } \rho = {1}/{2} \text{ if } \alpha = 1
   \bigr\}.
\]
%(see \cite[\S VII.1]{BertoinLP})

%The decomposition in (\ref{WHFSproof}) is what we refer to as the classical Wiener--Hopf factorisation of the stable process. 

In this article, we expose a second Wiener--Hopf factorisation which is `deeply' embedded within the stable processes through its so-called Lamperti--Kiu representation. Like the factorisation (\ref{WHFSproof}), the `deep factorisation' we will present has value in that it informs us about the fluctuations of the stable process.  As we shall see, this gives us access to an array of new results for stable processes, as well as a methodology for obtaining even more than those presented in this paper.

The Lampert--Kiu representation is a pathwise decomposition that holds more generally for any real-valued self-similar Markov processes (rssMp)  and shows that any such process can be written as a space-time changed Markov additive process (MAP). In a similar spirit to (\ref{charexp}), the semi-group of a MAP can be characterised via an exponent, albeit that it now takes the form of a complex-valued matrix function. Moreover,  just as with L\'evy processes, there exists a factorisation of the aforesaid matrix exponent.
Not a single concrete example of such a factorisation currently exists in the literature for such MAPs to the author's knowledge. 
Our main objective here is to provide a completely explicit Wiener--Hopf factorisation for the MAP that underlies the stable process via the Lamerti--Kiu transform: the so-called {\it Lamperti-stable MAP}.

Our approach appeals to three main techniques. First, information about the the respective individual entries in the matrix factors can be gleaned using asymptotic Markov additive renewal theory in the setting of excursion theory for MAPs. Second, the quantities that are identified in that way can be related to complex first passage problems for stable processes 
Third, the aforesaid first entry problems can be simplified by appealing to a version of the Riesz--Bogdan--Zak transform, which relates the  mapping of the path of a stable process via a Kelvin transform, together with an endogenous time change, to a Doob $h$-transform of the stable process. For $\alpha\in(1,2)$, this Doob $h$-transform corresponds to conditioning the stable process to avoid the origin, as explored in \cite{CPR}. For $\alpha\in(0,1)$ it corresponds to conditioning the stable process to being absorbed at the origin. Finally, for $\alpha=1$ there is, in effect, no $h$-transform as  $h\equiv 1$.

\bigskip

The remainder of this article is structured as follows. In the next section we explain the nature of the  Lamperti--Kiu representation for pssMps, due to \cite{CPR}, and its relation to MAPs. In particular we give the example of the of the Lamperti-stable MAP, also due to \cite{CPR}.
With this in hand, we are able to state our `deep' Wiener--Hopf factorisation result for the stable process. 
In section \ref{sectBZ} we discuss the Riesz--Bogdan--Zak transform.
In section \ref{ascending} we compute the first matrix factor of the Wiener--Hopf factorisation for the Lampert-stable MAP, which is the analogue of the contribution from the exponent of the ascending ladder height process in the Wiener--Hopf factorisation for L\'evy processes. In section \ref{updual}, we compute the second  Wiener--Hopf matrix factor, which is the analogue of the contribution from the ascending ladder height process of the dual in the Wiener--Hopf factorisation for L\'evy processes. Finally, in Section \ref{cramer}, we outline  how some Cram\'er-type asymptotics and explicit identities for the Lamperti-stable MAP and stable processes can be obtained from the methods that underly the deep factorisation. 

\section{MAPs and the Lamperti--Kiu transform}

This section is laid out as follows. We
devote the first two subsections to a discussion of Markov
additive processes and 
real self-similar Markov processes via the Lamperti--Kiu
representation. Finally, in the last
subsection, give our main result, the deep Wiener--Hopf factorisation of the stable process. 

\subsection{Markov additive processes}
\label{ss:MAP}

Let $E$ be a finite state space and $\GGt$ a standard
filtration. 
A c\`adl\`ag process $(\xi,J)$ in $\mathbb{R} \times E$
with law $\mathbf{P}$ is called a
\define{Markov additive process (MAP)} with respect to $\GGt$
if $(J(t))_{t \ge 0}$ is a continuous-time Markov chain in $E$, and
the following property is satisfied,
for any $i \in E$, $s,t \ge 0$:
\begin{eqnarray}
\label{e:MAP}
& \text{ given $\{J(t) = i\}$,
the pair $(\xi(t+s)-\xi(t), J(t+s))$ is independent of
$\mathcal{G}_t$,}\notag\\
&\text{ and has the same distribution as $(\xi(s)-\xi(0), J(s))$
given $\{J(0) = i\}$.}
\end{eqnarray}

Aspects of the theory of Markov additive processes
are covered in a number of texts, among them
\cite{Asm-rp1} and \cite{Asm-apq2}.
We will mainly use the notation of
\cite{Iva-thesis},
where it was principally assumed that 
$\xi$ is spectrally negative; the results which
we quote are valid without this hypothesis, however.

Let us introduce some notation.
For $x\in\mathbb{R}$,  write $\mathbf{P}_{x,i} = \mathbf{P}( \cdot \,\vert\, \xi(0) = x, J(0) = i)$.
If $\mu$ is a probability distribution on $E$, we write
$\mathbf{P}_{x, \mu}  
  = \sum_{i \in E} \mu_i \mathbf{P}_{x,i}$. % \mu_i?
% It will also be convenient to denote by $\mathbf{P}(A)$ the column vector
% with $i$th element $\mathbf{P}_i(A)$,
% and by $\mathbf{P}(A; J(t))$ the matrix whose $(i,j)$th entry
% is $\mathbf{P}_i(A, J(t) = j)$.
We adopt a similar convention
for expectations.

It is well-known that a Markov additive process $(\xi,J)$ also satisfies
\eqref{e:MAP} with $t$ replaced by a stopping time, albeit on the event that the stopping time is finite. The following proposition gives a characterisation of MAPs in terms of a mixture of L\'evy processes, a Markov chain and a family of additional jump distributions;
see \cite[\S XI.2a]{Asm-apq2}, \cite[Proposition 2.5]{Iva-thesis} as well as more classical literature such as \cite{Cinlar1, Cinlar2, AS}.

\begin{proposition}
  The pair $(\xi,J)$ is a Markov additive process if and only if, for each $i,j\in E$, 
  there exist a sequence of iid L\'evy processes
  $(\xi_i^n)_{n \ge 0}$ and  a sequence of iid random variables
  $(U_{i,j}^n)_{n\ge 0}$, independent
  of the chain $J$, such that if $\sigma_0 = 0$
  and $(\sigma_n)_{n \ge 1}$ are the
  jump times of $J$, the process $\xi$ has the representation
  \[ \xi(t) = \mathbf{1}_{(n > 0)}( \xi(\sigma_n -) + U_{J(\sigma_n-), J(\sigma_n)}^n) + \xi_{J(\sigma_n)}^n(t-\sigma_n),
    \for t \in [\sigma_n, \sigma_{n+1}),\, n \ge 0. \]
\end{proposition}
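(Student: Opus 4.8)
The plan is to prove the two implications separately, with almost all of the work lying in the forward (only-if) direction. For the converse I would begin from the stated pathwise representation and verify the defining property \eqref{e:MAP} by direct computation. The ingredients are that $J$, being a continuous-time Markov chain on the finite set $E$, is strong Markov with memoryless (exponential) holding times, and that the families $(\xi_i^n)_{n\ge 0}$ and $(U_{i,j}^n)_{n\ge 0}$ are independent of $J$ and mutually independent across $n$. Fixing $t \ge 0$ and working on $\{J(t)=i\}$, I would split $\xi(t+s)-\xi(t)$ into the increment accrued before the first jump of $J$ after $t$ and the increments accrued on the subsequent sojourn intervals. The lack of memory of the holding time in force at $t$, the stationary independent increments of each $\xi_i^n$, and the independence built into the construction then show that this increment, together with $J(t+s)$, is independent of $\mathcal{G}_t$ and has the law of the pair started afresh from $i$, which is precisely \eqref{e:MAP}.

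For necessity I would use that \eqref{e:MAP} persists when $t$ is replaced by a stopping time (as recalled just before the statement) and that the jump times $\sigma_0 = 0 < \sigma_1 < \cdots$ of $J$ are stopping times. Applying the strong Markov additive property at $\sigma_n$ shows that, conditionally on $\{J(\sigma_n)=j\}$, the shifted pair $(\xi(\sigma_n + \cdot) - \xi(\sigma_n),\, J(\sigma_n+\cdot))$ is independent of $\mathcal{G}_{\sigma_n}$ and is a fresh copy started from $j$. Iterating over $n$ reduces the construction to a single sojourn, and the iid families $(\xi_i^n)_n$ and $(U_{i,j}^n)_n$ then arise by gluing together the independent blocks obtained on the successive intervals $[\sigma_n,\sigma_{n+1})$.

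The crux, and the step I expect to be the main obstacle, is to show that while $J$ rests in a state $i$ the increments of $\xi$ coincide with those of a L\'evy process $\xi_i$ depending on $i$ alone. Writing $T$ for the first jump time of $J$ started from $i$, I note that on $\{T > t\}$ one has $J(u)=i$ for all $u \le t$; since $\{T>t\}\in\mathcal{G}_t$, the MAP property at the deterministic time $t$ applies and shows that, conditionally on not having left $i$ by time $t$, the subsequent increment of $\xi$ is independent of the past and identically distributed. Combined with the lack of memory of the exponential holding time $T$, this yields stationary independent increments for the sojourn-increment process, and the L\'evy--Khintchine characterisation then identifies a unique L\'evy law, from which a genuine L\'evy process $\xi_i$ on $[0,\infty)$ can be built in the standard manner. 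The delicate point is to disentangle the displacement contributed by the L\'evy motion during the sojourn from the additional jump attached to the transition itself.

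Finally I would isolate these transition jumps. At a jump time $\sigma_n$ with $J(\sigma_n-)=i$ and $J(\sigma_n)=j$, set $U_{i,j} := \xi(\sigma_n)-\xi(\sigma_n-)$. Using the strong Markov additive property at $\sigma_n$ together with the independence of the pre- and post-transition sojourn increments established above, I would argue that the law of $U_{i,j}$ depends only on the ordered pair $(i,j)$ and is independent of $\mathcal{G}_{\sigma_n-}$ and of the sojourn L\'evy motions, producing the iid families $(U_{i,j}^n)_n$. Reassembling the sojourn L\'evy motions and the transition jumps across all the intervals $[\sigma_n,\sigma_{n+1})$ then reproduces the claimed representation and completes the proof.
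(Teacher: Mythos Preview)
The paper does not supply its own proof of this proposition: it is stated as a known structural characterisation of MAPs and attributed to \cite[\S XI.2a]{Asm-apq2}, \cite[Proposition 2.5]{Iva-thesis} and the classical references \cite{Cinlar1, Cinlar2, AS}. There is therefore nothing in the paper to compare your argument against; your outline is essentially the standard decomposition one finds in those sources, and the overall strategy (strong MAP property at the jump times $\sigma_n$, L\'evy structure on each sojourn via stationary independent increments on $\{T>t\}$, extraction of the transition jumps) is correct.

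One point worth tightening in your write-up is the identification $U_{i,j}^n = \xi(\sigma_n)-\xi(\sigma_n-)$. You flag the need to ``disentangle the displacement contributed by the L\'evy motion during the sojourn from the additional jump attached to the transition itself'', but you do not say how. The clean way is to note that, conditionally on $J$, the holding time $\sigma_n - \sigma_{n-1}$ is an independent exponential variable; since a L\'evy process almost surely does not jump at an independent, absolutely continuous random time, the sojourn L\'evy process contributes no jump at $\sigma_n$, and the entire discontinuity $\xi(\sigma_n)-\xi(\sigma_n-)$ is legitimately attributed to $U_{i,j}^n$. Making that explicit removes the only real gap in your sketch.
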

For each $i \in E$, it will be convenient to define,
on the same probability space, $\xi_i$ as a L\'evy process whose distribution is the common
  law of the $\xi_i^n$ processes in the above representation; and similarly, for each $i,j \in E$, define $U_{i,j}$ to
  be a random variable having the common law of the $U_{i,j}^n$ variables.

Henceforth, we confine ourselves
to irreducible (and hence ergodic) Markov chains $J$.
Let the state space $E$ be the finite set $\{1 \cdotsc N\}$, for some $N \in \NN$.
Denote the transition rate matrix of the chain $J$ by
${\boldsymbol{Q}} = (q_{i,j})_{i,j \in E}$.
For each $i \in E$, the Laplace exponent of the L\'evy process $\xi_i$
will be written $\psi_i$. % , in the sense that ${\rm e}^{\psi_i(z)} = \LevE({\rm e}^{z \xi_i(1)})$, for all $z \in \CC$ for which the right-hand side exists. 
For each pair of $i,j \in E$,
define
the Laplace transform $G_{i,j}(z) = \mathbf{E}({\rm e}^{z U_{i,j}})$
 of the jump
distribution $U_{i,j}$,
where this exists. Write ${\boldsymbol{G}}(z)$ for the $N \times N$ matrix
whose $(i,j)$th element is $G_{i,j}(z)$. %\footnote{Here, we understand the Laplace transform  of a random variable $W$ with law $P$ (and associated expectation operator $E$) to mean $E(\exp\{z W\})$, for all $z\in\mathbf{C}$ such that the expectation is finite. Moreover, we define the associated Laplace exponent as $\log E(\exp\{z W\}).$}
We will adopt the convention that $U_{i,j} = 0$ if
$q_{i,j} = 0$, $i \ne j$, and also set $U_{ii} = 0$ for each $i \in E$.

The multidimensional analogue of the Laplace exponent of a L\'evy process is
provided by the matrix-valued function
\begin{equation}\label{e:MAP F}
 {\bf F}(z) = \diag( \psi_1(z) \cdotsc \psi_N(z))
  + {\boldsymbol{Q}} \Had {\boldsymbol{G}}(z),
\end{equation}
for all $z \in \CC$ where the elements on the right are defined,
where $\Had$ indicates elementwise multiplication, also called
Hadamard multiplication.
It is then known
%\cite[Lemma 2.1]{AK-mg}
that
\[ \mathbf{E}_{0,i}( {\rm e}^{z \xi(t)} ; J(t)=j) = \bigl({\rm e}^{\boldsymbol{F}(z) t}\bigr)_{i,j} , \for i,\,j \in E,  t\geq 0,\]
for all $z \in \CC$ where one side of the equality is defined.
For this reason, $\boldsymbol{F}$ is called the \define{matrix exponent} of
the MAP $(\xi, J)$.

Just as is the case with L\'evy processes, the exponents of MAPs are also known to have a Wiener--Hopf factorisation. However, this time, the two factors correspond to the matrix exponent of the  ascending (resp.  descending) ladder processes.  These are  themselves MAPs with trajectories which agree with    the range and state of the modulating chain at times of new maxima (resp. minima).  In order to explain the nature of the Wiener--Hopf factorisation for MAPs, we need to introduce a little more notation.

Associated to the running maximum process $(\sup_{s\leq t}\xi(s))_{t\geq 0}$ is a Markov additive subordinator. That is, a MAP, say $(H^+(t), J^+(t))_{t\geq 0}$, with the property that $H^+ $ is non-decreasing  with the same range as the running maximum. Moreover, its exponent can be identified by  $-\boldsymbol{\kappa}(-z)$, where
\begin{equation}
\boldsymbol\kappa(\lambda) = \text{diag}(\Phi _1(\lambda), \cdots, \Phi _N(\lambda)) - {\boldsymbol{\Lambda}}\circ {\boldsymbol{K}}(\lambda),
\qquad \lambda\geq 0,
\label{MAPBernstein}
\end{equation}
is a matrix analogue of a Bernstein function.
Here, for $i =1,\cdots, N$, $\Phi _i$ are Bernstein functions (exponents of subordinators), ${\boldsymbol{\Lambda}}  = (\Lambda_{i,j})_{i,j\in E}$ is the intensity matrix of $J^+$ and ${\boldsymbol{K}}(\lambda)_{i,j} = {\mathbf E}[{\rm e}^{-\lambda U^+_{i,j}}]$, where $U^+_{i,j}\geq 0$ are the additional discontinuities added to the path of $\xi$ each time the chain $J^+$ switches from $i$ to $j$, and $U^+_{i,i}: = 0$, $i\in E$.
 
We also need to talk about the same quantity but for the dual of $(\xi, J)$. Whilst the dual of a L\'evy process is equal in law to nothing more than its negative, the situation for MAPs is a little more involved. First note that, thanks to irreducibility,  the Markov chain $J$ necessarily has a stationary distribution. We denote it by the vector $\boldsymbol\pi  = (\pi_1, \cdots, \pi_N)$.  The dual process that is the  MAP with probabilities $\hat{\mathbf P}_{x,i}$, $x\in\mathbb{R}$, $i\in E$, whose matrix exponent, when it is defined, is given by, 
\begin{align*}
	\hat{\mathbf E}_{0,i}\big[ {\rm e}^{z {\xi}(t)},J(t)=j\big]=\big( {\rm e}^{\hat{\boldsymbol{F}}(z)t}\big)_{i,j},\quad i,j\in E,
\end{align*}
where 
\[
	\hat{\boldsymbol{F}}(z):=\text{diag}\big(\psi_1(-z),...,\psi_{|E|}(-z)\big)+\hat{{\boldsymbol{Q}}} \circ {\boldsymbol{G}}(-z)^{\rm T}%(\hat{q}_{i,j} G_{i,j}(-z))_{i,j\in E}
\]
and $\hat {\boldsymbol{Q}}$ is the intensity matrix of the modulating Markov chain on $E$ with entries given by
\[
\hat{q}_{i,j} = \frac{\pi_j}{\pi_i}q_{j,i}, \qquad i,j\in E.
\]
Note that the latter can also be written $\hat{{\boldsymbol{Q}}} = \boldsymbol{\Delta}_\pi^{-1} {\boldsymbol{Q}}^{\rm T} \boldsymbol{\Delta}_{\boldsymbol\pi}$, where $\boldsymbol{\Delta}_{\boldsymbol\pi} = \text{diag}(\boldsymbol{\pi})$, the matrix with diagonal entries given by $\boldsymbol\pi$ and zeros everywhere else. Hence, when it exists,
\begin{equation}
\hat{\boldsymbol{F}}(z) = \boldsymbol{\Delta}_\pi^{-1}\boldsymbol{F}(-z)^{\rm T}\boldsymbol{\Delta}_\pi, 
\label{dualF}
\end{equation}
showing that 
\begin{equation}
	\pi_i\hat{\mathbf E}_{0,i}\big[ {\rm e}^{z {\xi}(t)},J(t)=j\big]=\pi_j{\mathbf E}_{0,j}\big[ {\rm e}^{-z {\xi}(t)},J(t)=i\big].
	\label{incrementduality}
\end{equation}
At the level of processes, one can understand (\ref{incrementduality}) as saying the following.
\begin{lemma}\label{duality}The time-reversed process $\{ \left(\xi((t-s)-) -\xi(t), J((t-s)-) \right): s\leq t\}$ under $\mathbf{P}_{0,{\boldsymbol\pi}}$ is equal in law to $\{(\xi(s), J(s)) : s\leq t\}$ under $\hat{\mathbf P}_{0,{\boldsymbol\pi}}$. 
\end{lemma}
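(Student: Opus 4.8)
The plan is to establish that the two processes have identical finite-dimensional distributions; since both take values in the Skorokhod space of càdlàg paths, this is enough to conclude equality in law. The engine of the argument is the single-time increment duality \eqref{incrementduality}, which I would first recast as an identity between transition kernels. Because $(\xi,J)$ is Markov additive, its law is encoded in the kernels $P_t^{(i,j)}(\dd u):=\mathbf{P}_{0,i}(\xi(t)\in\dd u,\,J(t)=j)$, which depend on the spatial variable only through the increment; write $\hat P_t^{(i,j)}$ for the analogous kernels of the dual. Reading \eqref{incrementduality} with $z=\iu\theta$ as an identity of Fourier transforms and inverting yields the measure identity $\pi_i\,\hat P_t^{(i,j)}(\dd u)=\pi_j\,P_t^{(j,i)}(-\dd u)$.

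Second, I would write out the finite-dimensional law of the time-reversed process and match it to the dual. Fix $t>0$ and deterministic times $0=r_0<\cdots<r_n=t$; since a càdlàg Markov process has no fixed discontinuities, the left limits $\xi(r_k-),J(r_k-)$ may be replaced by $\xi(r_k),J(r_k)$ almost surely, so sampling the reversed process along $\tilde s_k=t-r_{n-k}$ produces the states $(\xi(r_{n-k})-\xi(t),\,J(r_{n-k}))$. Writing $i_k=J(r_k)$ and letting $v_k$ denote the increment $\xi(r_k)-\xi(r_{k-1})$, the Markov property and the stationary start give, under $\mathbf{P}_{0,\boldsymbol\pi}$, the joint mass $\pi_{i_0}\prod_{k=1}^n P_{r_k-r_{k-1}}^{(i_{k-1},i_k)}(\dd v_k)$, whereas $\hat{\mathbf{P}}_{0,\boldsymbol\pi}$ assigns to the matching configuration (chain path $i_n,\dots,i_0$, spatial increments $-v_n,\dots,-v_1$) the mass $\pi_{i_n}\prod_{\ell=1}^n \hat P_{r_\ell-r_{\ell-1}}^{(i_\ell,i_{\ell-1})}(-\dd v_\ell)$. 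Applying the kernel identity turns each dual factor into $(\pi_{i_{\ell-1}}/\pi_{i_\ell})\,P_{r_\ell-r_{\ell-1}}^{(i_{\ell-1},i_\ell)}(\dd v_\ell)$, and the telescoping product $\prod_\ell \pi_{i_{\ell-1}}/\pi_{i_\ell}=\pi_{i_0}/\pi_{i_n}$ cancels the leading $\pi_{i_n}$, leaving exactly $\pi_{i_0}\prod_k P_{r_k-r_{k-1}}^{(i_{k-1},i_k)}(\dd v_k)$. Thus the two finite-dimensional laws coincide.

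The genuinely substantive step is the passage from the one-time transform identity to the kernel identity in a form robust enough to feed into the chain rule, together with the reversal bookkeeping: aligning the reversed time increments $\tilde s_k-\tilde s_{k-1}$ with $r_{n-k+1}-r_{n-k}$ and tracking the sign flip of the spatial increments. I expect the remaining care to be soft and measure-theoretic: justifying the Fourier inversion when \eqref{incrementduality} is only guaranteed on a vertical strip (handled by specialising to $z=\iu\theta$, where characteristic functions are globally defined and determine the law of a real-valued increment), and justifying the replacement of left limits by values at the deterministic times $r_k$ (valid because, for a Hunt-type process, $t\mapsto(\xi(t),J(t))$ is a.s.\ continuous at each fixed $t$). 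The left-limit formulation in the statement is precisely what renders the reversed trajectory càdlàg rather than càglàd, so once the finite-dimensional distributions match there is no further regularity obstruction.
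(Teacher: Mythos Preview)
Your argument is correct. The paper does not actually supply a proof of this lemma: it merely records it as the process-level interpretation of the single-time increment duality \eqref{incrementduality}, treating the passage from one to the other as folklore. What you have written is precisely the standard way to justify that folklore --- invert \eqref{incrementduality} along the imaginary axis to obtain the kernel identity $\pi_i\,\hat P_t^{(i,j)}(\dd u)=\pi_j\,P_t^{(j,i)}(-\dd u)$, then factor the finite-dimensional distribution via the Markov-additive property, apply the kernel identity in each factor, and let the $\pi$-ratios telescope. Your bookkeeping of the time and sign reversals is accurate, and you are right that the left-limit convention is innocuous at deterministic sampling times while ensuring the reversed path is c\`adl\`ag.
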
 
 
 We are now ready to state the Wiener--Hopf factorisation for MAPs. Whilst some results in this direction exist in classical literature, see for example Chapter XI of \cite{Asmussen} or Theorem 3.28 of \cite{Kaspi}. Several other references can be cited in this respect, for example  \cite{KP} and \cite{AS}. None of them are in an appropriate form for our purposes. We have lifted the following result from the Appendix of the recent article \cite{P_0}.
 \begin{theorem}\label{WHF}
For $\theta\in \mathbb{R}$, up to an multiplicative constant,
\[
- \boldsymbol{F}(\iu \theta) = \boldsymbol{\Delta}_{\boldsymbol\pi}^{-1}\hat{\boldsymbol{\kappa}}( \iu \theta)^{\rm T}\boldsymbol{\Delta}_{\boldsymbol\pi}\boldsymbol{\kappa}(-{\rm i}\theta) ,
\]
where  $\hat{\boldsymbol{\kappa}}$ plays the role of $\boldsymbol{\kappa}$, but for the dual MAP to $(\xi, J)$.
\end{theorem}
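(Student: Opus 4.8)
The plan is to carry out the classical probabilistic Wiener--Hopf decomposition of a process at its running maximum, sampled at an independent exponential time, adapt it to the Markov additive setting, and then convert the resulting spatial factorisation of the resolvent into the stated factorisation of the matrix exponent. Throughout, the ascending factor will be supplied by the ascending ladder MAP and the descending factor by the dual, the conjugation by $\boldsymbol{\Delta}_{\boldsymbol\pi}$ being forced by the duality relations \eqref{dualF} and \eqref{incrementduality}.

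First I would fix $q>0$, let $\ee_q$ be an independent exponentially distributed time of rate $q$, and decompose the position $\xi(\ee_q)=\overline\xi(\ee_q)+\bigl(\xi(\ee_q)-\overline\xi(\ee_q)\bigr)$, where $\overline\xi(\ee_q):=\sup_{s\le \ee_q}\xi(s)$. Running the Markov additive excursion theory from the maximum---in which the reflected process $\overline\xi-\xi$ is Markovian and the local time at the maximum time-changes to produce the ascending ladder MAP $(H^+,J^+)$ with exponent $-\boldsymbol\kappa(-z)$ as in \eqref{MAPBernstein}---one should obtain that, conditionally on the state of $J$ at the maximum, the pair $(\overline\xi(\ee_q),J(\ee_q))$ recording the maximum and the state there, and the non-positive depth $\xi(\ee_q)-\overline\xi(\ee_q)$, are independent. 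This is the key structural step, and it yields a factorisation of the $q$-resolvent of $(\xi,J)$ into an ``ascending'' potential governed by $\boldsymbol\kappa$ and a ``descending'' potential governed by the depth below the maximum.

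Next I would identify the descending factor. By the time-reversal duality of Lemma \ref{duality}, the depth $\xi(\ee_q)-\overline\xi(\ee_q)$ is equal in law to minus the running maximum of the dual MAP $\hat{\mathbf P}$, whose ascending ladder process carries exponent $-\hat{\boldsymbol\kappa}(-z)$; equivalently it is governed by the descending ladder process of $(\xi,J)$. The stationary-measure conjugation recorded in \eqref{dualF}, namely $\hat{\boldsymbol F}(z)=\boldsymbol{\Delta}_{\boldsymbol\pi}^{-1}\boldsymbol F(-z)^{\rm T}\boldsymbol{\Delta}_{\boldsymbol\pi}$ and its process-level form \eqref{incrementduality}, is precisely what converts the dual's ascending ladder exponent $\hat{\boldsymbol\kappa}(\iu\theta)$ into the descending factor of the original MAP, $\boldsymbol{\Delta}_{\boldsymbol\pi}^{-1}\hat{\boldsymbol\kappa}(\iu\theta)^{\rm T}\boldsymbol{\Delta}_{\boldsymbol\pi}$. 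Assembling the two factors, inverting the resolvent factorisation, and letting $q\downarrow 0$ so that the killing disappears, would pass the identity to a factorisation of $-\boldsymbol F(\iu\theta)$ and deliver $-\boldsymbol F(\iu\theta)=\boldsymbol{\Delta}_{\boldsymbol\pi}^{-1}\hat{\boldsymbol\kappa}(\iu\theta)^{\rm T}\boldsymbol{\Delta}_{\boldsymbol\pi}\,\boldsymbol\kappa(-\iu\theta)$ up to the stated multiplicative constant.

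The hard part will be the conditional-independence step and the correct bookkeeping of the modulating chain through the time reversal. In the scalar setting this independence is the content of the Wiener--Hopf decomposition at an independent exponential time, but for MAPs one must run the excursion theory fibre-wise over the states of $J$, verify that the state recorded at the maximum is the correct variable on which to condition, and check that the two ladder exponents of \eqref{MAPBernstein} assemble as a genuine matrix product rather than an entrywise (Hadamard) one. Handling the boundary phenomena---whether $\xi$ creeps across levels, and the possibility that $J$ switches state exactly at the maximum---also demands care, as does justifying the interchange of limits as $q\downarrow 0$; it is precisely to avoid re-deriving this excursion theory that one imports the clean analytic statement of Theorem \ref{WHF} from \cite{P_0}.
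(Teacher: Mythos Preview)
The paper does not prove Theorem \ref{WHF}: immediately before the statement it explicitly says ``We have lifted the following result from the Appendix of the recent article \cite{P_0}.'' There is therefore no proof in the paper to compare against, and you correctly flag this yourself in your final sentence. Your sketch---splitting $\xi(\ee_q)$ at its maximum, invoking conditional independence given the state of $J$ at the maximum, identifying the descending piece with the dual's ascending ladder via Lemma \ref{duality} and \eqref{dualF}, and then letting $q\downarrow 0$---is the standard probabilistic route to such factorisations and is consistent with what one would expect the argument in \cite{P_0} to contain; the caveats you raise about matrix versus Hadamard products, state-switching at the maximum, and the $q\downarrow 0$ limit are exactly the places where the MAP case requires genuine extra work over the L\'evy case.
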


Note that this Theorem is consistent with the Wiener--Hopf factorisation for L\'evy processes \eqref{LWHF} as, in that setting, the dual process is its negative.

\subsection{Real self-similar Markov processes}\label{rssmp}
\newcommand{\CPRE}{\mathcal{E}}
\newcommand{\mT}{\mathcal{T}} % my notation for the \sigma_n etc. in CPR

The structure of real self-similar Markov processes has been investigated
by \cite{Chy-Lam} in the symmetric case, and \cite{CPR} in general.
Here, we give an interpretation of these authors' results
in terms of a two-state Markov additive process. We begin with some
relevant definitions and introductory results.

A \define{real self-similar Markov process} (rssMp) with \define{self-%
similarity index} $\alpha > 0$ is a standard (in the sense of \cite{BG-mppt})
Markov process $X = \stproc{X}$ with probability laws
$(\mathbb{P}_x)_{x \in \RR}$ which satisfies the
\define{scaling property} that for all $x \in \RR \setminus \{0\}$
and $c > 0$,
\[ \text{the law of }(c X_{t c^{-\alpha}})_{t \ge 0}
  \text{ under } \mathbb{P}_x \text{ is } \mathbb{P}_{cx} . \]

In \cite{CPR} the authors confine their attention to processes
in `class \textbf{C.4}'. An rssMp $X$ is in this class if, for
all $x \ne 0$, $\mathbb{P}_x( \exists t > 0: X_t X_{t -} < 0 ) = 1$;
that is, with probability one, the process $X$ changes
sign infinitely often.
Define
\[ \tau^{\{0\}} = \inf\{t \ge 0: X_t = 0 \},\]
the time to absorption at the origin.

Such a process may be identified with a MAP via a deformation of space and
time  which we call the
\define{Lamperti--Kiu representation} of $X$. The following
result is a simple corollary of \cite[Theorem 6]{CPR}.

\begin{proposition}
  Let $X$ be an rssMp in class \textbf{C.4} and fix $x \ne 0$.
  Define the symbol
  \[ [y] = \begin{cases}
             1, & y > 0, \\
             2, & y < 0.
           \end{cases}
  \]
  Then there exists a time-change $\sigma$, adapted to the filtration of $X$,
  such that, under the law $\mathbb{P}_x$, the process 
  \[ (\xi(t),J(t)) = (\log\abs{X_{\sigma(t)}}, [X_{\sigma(t)}]) , \qquad t \ge 0, \]
  is a MAP with state space $E = \{1,2\}$
  under the law $\mathbf{P}_{\log |x|,[x]}$.
  Furthermore, the process $X$ under $\mathbb{P}_x$ has the representation
  \[ X_t =  \exp\bigl( \xi( \varphi(t))
    + \iu \pi (J( \varphi(t)) + 1 )\bigr) , \for 0 \le t < \tau^{\{0\}}, \]
  where $\varphi$ is the inverse of the time-change $\sigma$,
  and may be given by
\begin{equation}\label{e:Lamp time change}
  \varphi(t) = \inf \biggl\{ s > 0 : \int_0^s \exp(\alpha \xi(u))
  \, \dd u > t  \biggr\}, \for t < \tau^{\{0\}},
\end{equation}
such that $(\xi, J)$ has law $\mathbf{P}_{\log x, [x]}$.
\end{proposition}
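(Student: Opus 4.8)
The plan is to build the time change explicitly from the radial part of $X$, read off $(\xi,J)$, and then invoke \cite[Theorem 6]{CPR} to certify the Markov additive structure. First I would introduce the \emph{Lamperti clock}
\[
 \varphi(t) = \int_0^t \abs{X_r}^{-\alpha}\,\dd r, \for 0\le t<\tau^{\{0\}},
\]
and note that, on $[0,\tau^{\{0\}})$, the map $t\mapsto\varphi(t)$ is continuous, strictly increasing (its derivative $\abs{X_t}^{-\alpha}$ is strictly positive there) and adapted to the filtration of $X$; hence it admits a continuous, strictly increasing inverse $\sigma=\varphi^{-1}$, which is the claimed time change. Setting $(\xi(t),J(t)) = (\log\abs{X_{\sigma(t)}},[X_{\sigma(t)}])$ then defines a process in $\RR\times\{1,2\}$ started from $(\log\abs{x},[x])$.

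The core step is to show that $(\xi,J)$ satisfies \eqref{e:MAP}, and this is where the scaling property does the work. Fix $t\ge 0$ and condition on $J(t)=i$, so that $X_{\sigma(t)}$ has sign determined by $i$ and modulus $e^{\xi(t)}$. Applying the scaling property with $c=e^{-\xi(t)}$ maps the process restarted (via the strong Markov property) from $X_{\sigma(t)}$ onto a copy of $X$ issued from $\pm 1$, the sign depending only on $i$; the logarithm converts the multiplicative factor $c$ into the additive shift $-\xi(t)$, while the homogeneity of the clock $\varphi$ under the same rescaling shows that the post-$t$ increment $(\xi(t+s)-\xi(t),J(t+s))$ is independent of $\mathcal{G}_t$ and distributed as $(\xi(s)-\xi(0),J(s))$ under $\mathbf{P}_{0,i}$. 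Rather than carry out this computation in full, I would identify it with the content of \cite[Theorem 6]{CPR}, which is precisely the statement that $\log\abs{X}$, time-changed by $\sigma$ and augmented by the sign process, is Markov additive on $E=\{1,2\}$; class \textbf{C.4} guarantees that both states are genuinely visited and that $J$ is a bona fide irreducible two-state chain.

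It remains to invert the construction. Since $\sigma(\varphi(t))=t$ for $t<\tau^{\{0\}}$, substituting $t\mapsto\varphi(t)$ in the definitions of $\xi$ and $J$ gives $\log\abs{X_t}=\xi(\varphi(t))$ and $[X_t]=J(\varphi(t))$. Writing $X_t=\sgn(X_t)\abs{X_t}$ and encoding the sign through $\sgn(X_t)=\exp(\iu\pi(J(\varphi(t))+1))$, which equals $+1$ when $J=1$ and $-1$ when $J=2$, yields exactly $X_t=\exp(\xi(\varphi(t))+\iu\pi(J(\varphi(t))+1))$. Finally, to confirm that $\varphi$ has the form \eqref{e:Lamp time change}, I would differentiate the clock: from $\abs{X_r}=e^{\xi(\varphi(r))}$ one gets $\varphi'(t)=e^{-\alpha\xi(\varphi(t))}$, equivalently $\int_0^{\varphi(t)}e^{\alpha\xi(u)}\,\dd u=t$, which is the defining relation of the right-inverse in \eqref{e:Lamp time change}.

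The main obstacle is the MAP verification in the second step. Because the result is billed as a corollary of \cite[Theorem 6]{CPR}, the substantive work is not in the fluctuation theory but in careful bookkeeping: checking that $\sigma$ is a legitimate (finite, adapted) time change up to absorption, that the strong Markov property of $X$ may be applied at the relevant times, and that the scaling property delivers the required time-homogeneity of increments once one passes to the log/clock coordinates. The sign encoding and the identification of the clock are then routine, so the crux is translating CPR's pathwise decomposition faithfully into the framework of \eqref{e:MAP}.
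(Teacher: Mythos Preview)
The paper does not supply a proof of this proposition; it simply declares the statement to be ``a simple corollary of \cite[Theorem 6]{CPR}''. Your proposal follows precisely that route---constructing the Lamperti clock explicitly, invoking \cite[Theorem 6]{CPR} for the Markov additive property, and then performing the routine bookkeeping (sign encoding and clock inversion) to recover the stated representation of $X$---so it is correct and aligned with the paper's approach, only more explicit.
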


% Note that the MAP $(\xi,J)$ under $\mathbf{P}_1$ corresponds to the rssMp
% $X$ started at a point $x > 0$, and the MAP under $\mathbf{P}_2$
% corresponds to the rssMp started at a point $x < 0$.

%We observe from the expression \eqref{e:Lamp time change}
%for the time-change $\varphi$ that under $\mathbb{P}_x$, for any $x \ne 0$,
%the following identity holds for $\tau^{\{0\}}$, the hitting time of zero:
%\[ \abs{x}^{\alpha} \tau^{\{0\}} = \int_0^\infty {\rm e}^{\alpha \xi(u)} \, \dd u. \]
% where either $\tau^{\{0\}}$ is under $\mathbb{P}_1$ and $\xi$ is under $\mathbf{P}p$,
% or $\tau^{\{0\}}$ is under $\mathbb{P}_{-1}$ and $\xi$ is under $\mathbf{P}m$.
%Implicit in this statement is that the underlying MAP  has law $\mathbf{P}_{0,1}$ if $x > 0$, and law $\mathbf{P}_{0,2}$ if$x < 0$.
%This observation will be exploited in the coming section, in which we put together the theory we have outlined so far.

\subsection{The Lamperti-Stable MAP and deep factorisation}
Now let us return to the case that $X$ is a stable process as described in the introduction, which is also an rssMp.  
%As a L\'evy process, the law of stable processes is entirely determined by its increments, which in turn are characterised by \eqref{Psi_alpha_rho_parameterization}. A straightforward computation shows that, for all $c>0$,  
%\[
%\mathbb{E}[{\rm e}^{\iu\theta X_{c^{-\alpha}t}}] ={\rm e}^{- \Psi(c\theta)c^{-\alpha }t} = {\rm e}^{-\Psi(\theta)t} = \mathbb{E}[{\rm e}^{\iu\theta X_t}], \qquad \theta\in\mathbb{R}.
%\]  
%In turn,  one deduces directly that $X$ is a self-similar Markov process. 
In \cite[\S 4.1]{CPR}, the authors calculate the characteristics
of the Lamperti--Kiu representation for $X$ until absorption at the origin; that is, they compute the characteristics of the processes 
$\xi_i$, the jump distributions $U_{i,j}$ and rates the $q_{i,j}$, for $i,j\in \{1,2\}$.
Using this information, and the representation \eqref{e:MAP F},
it was shown in \cite{T_0} that the MAP $( \xi,J)$ has matrix exponent

\begin{equation}
  \boldsymbol{F}(z) =\left[
  \begin{array}{cc}
    - \dfrac{\Gamma(\alpha-z)\Gamma(1+z)}
      {\Gamma(\alpha\hat\rho-z)\Gamma(1-\alpha\hat\rho+ z)}
    & \dfrac{\Gamma(\alpha-z)\Gamma(1+z)}
      {\Gamma(\alpha\hat\rho)\Gamma(1-\alpha\hat\rho)}
    \\
    &\\
    \dfrac{\Gamma(\alpha-z)\Gamma(1+ z)}
      {\Gamma(\alpha\rho)\Gamma(1-\alpha\rho)}
    & - \dfrac{\Gamma(\alpha-z)\Gamma(1+z)}
      {\Gamma(\alpha\rho-z)\Gamma(1-\alpha\rho+z)}
  \end{array} 
  \right],
  \label{MAPHG}
\end{equation}
for $\Re(z)\in(-1,\alpha)$.
In the spirit of \cite[Chapter 13.4]{Kyp} we refer to this process as a {\it Lamperti-stable MAP}.

We should also note that the diagonal terms have entries which are characteristic exponents which belong to the class of so-called {\it hypergeometric L\'evy processes}. Moreover, up to a multiplicative constant, the off-diagonal terms can be shown to be the Laplace transforms of distributions, which possess a density with respect to Lebesgue measure that can be written in terms of the classical hypergeometric $_2\mathcal{F}_1$ function (see Chapter 13 of \cite{Kyp}). The matrix exponent (\ref{MAPHG}) could, in theory, be shown to belong to a bigger family of MAPs which are in some sense a natural generalisation of the class of hypergeometric L\'evy processes (cf. \cite{KP}). Indeed, we shall see other MAPs in the forthcoming analysis which are different to the Lamperti-stable MAP but clearly are close relatives with a common analytic structure.
We shall explore this remark in more detail in future work however.

Our main result, below, gives the explicit factorisation of (\ref{MAPHG}) as predicted by Theorem \ref{WHF}. To our knowledge this is the first time that an example of the Wiener--Hopf factorisation for a MAP has has been detailed explicitly.

\bigskip

We first need to introduce some notation. Of use will be the family of  Bernstein functions
\begin{equation}
\kappa_{q+i,p+j}(\lambda): =% \alpha\theta\frac{\Gamma(\alpha\rho)\Gamma(1-\alpha\rho) + \Gamma(\alpha\hat\rho)\Gamma(1-\alpha\hat\rho) }{\Gamma(\alpha\rho)\Gamma(1-\alpha\rho)\Gamma(\alpha\hat\rho)\Gamma(1-\alpha\hat\rho)}
\int_0^\infty (1- {\rm e}^{-\lambda x} )\frac{((q+i)\vee (p+j)-1)}{ (1-{\rm e}^{-x})^{q+i}(1+{\rm e}^{-x})^{p+j} }{\rm e}^{-\alpha x}\d x, \qquad \lambda \geq 0, 
\label{kappas}
\end{equation}
where  $q,p\in\{\alpha\rho,\alpha\rhohat\}$ and $i,j\in\{0,1\}$ such that $q+p = \alpha$ and  $i+j =1$. Note that it is easy to verify that the above expression is indeed a Bernstein function as the associated L\'evy density behaves like either $x^{-\alpha\rho -1}$ or $x^{-\alpha\rhohat-1}$ as $x\downarrow0$ and like ${\rm e}^{-\alpha x}$ as $x\uparrow\infty$. Accordingly, it is also straightforward to verify that the mean value $\kappa'_{q+i,p+j}(0+)$ is finite.

\begin{theorem}
\label{WHFMAPHG} When $\alpha\in(0,1]$,
We have the following two components to the factorisation in Theorem \ref{WHF}.

\bigskip

\noindent (i) Up to a multiplicative constant,
% $(\sin(\pi\alpha\rhohat) + \sin(\pi\alpha\rho))/\pi$, 
the ascending ladder MAP exponent is given by 
\begin{eqnarray*}
\boldsymbol{\kappa}(\lambda) &=& \scriptsize{
\left[
\begin{array}{cc}
%\alpha\rho
%\alpha\rho
\kappa_{\alpha\rho+1, \alpha\rhohat}(\lambda)+\dfrac{\sin(\pi\alpha\rhohat)}{\sin(\pi\alpha\rho)}\kappa'_{\alpha\rhohat, \alpha\rho+1}(0+) &  
%\alpha\rho
-\dfrac{\sin(\pi\alpha\rhohat)}{\sin(\pi\alpha\rho)}\dfrac{\kappa_{\alpha\rhohat, \alpha\rho+1}(\lambda)}{\lambda}\\
&\\
 %\alpha\rhohat
- \dfrac{\sin(\pi\alpha\rho)}{\sin(\pi\alpha\rhohat)}\dfrac{\kappa_{\alpha\rho, \alpha\rhohat+1}(\lambda)}{\lambda}&
 %\alpha\rhohat
  \kappa_{\alpha\rhohat+1, \alpha\rho}(\lambda) 
+%\alpha\rhohat
 \dfrac{\sin(\pi\alpha\rho)}{\sin(\pi\alpha\rhohat)}\kappa'_{\alpha\rho, \alpha\rhohat+1}(0+)
\end{array}
\right]}
,
\end{eqnarray*}
for $\lambda\geq 0$.
\bigskip

\noindent (ii) Up to a multiplicative constant,
% $(\sin(\pi\alpha\rho) + \sin(\pi\alpha\rhohat))/\pi $, 
the dual ascending ladder MAP exponent is given by 
\begin{eqnarray*}
\hat{\boldsymbol{\kappa}}(\lambda) &=& 
\scriptsize{\left[
\begin{array}{cc}
\kappa_{\alpha\rhohat+1, \alpha\rho}(\lambda + 1-\alpha)+\dfrac{\sin(\pi\alpha\rho)}{\sin(\pi\alpha\rhohat)}\kappa'_{\alpha\rho, \alpha\rhohat+1}(0+) &  
-\dfrac{\kappa_{\alpha\rho, \alpha\rhohat+1}(\lambda+ 1-\alpha)}{\lambda+ 1-\alpha}\\
&\\
-\dfrac{\kappa_{\alpha\rhohat, \alpha\rho+1}(\lambda+ 1-\alpha)}{\lambda+ 1-\alpha}&
  \kappa_{\alpha\rho+1, \alpha\rhohat}(\lambda+ 1-\alpha) 
+
 \dfrac{\sin(\pi\alpha\rhohat)}{\sin(\pi\alpha\rho)}\kappa'_{\alpha\rhohat, \alpha\rho+1}(0+)
\end{array}
\right]},
\end{eqnarray*}
for $\lambda \geq 0$.
\end{theorem}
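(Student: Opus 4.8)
The plan is to build each matrix factor from the fluctuation theory of the Lamperti-stable MAP and then to confirm, via the uniqueness of the factorisation in Theorem~\ref{WHF}, that the resulting pair is the required one. Throughout I read the ascending ladder exponent in the form \eqref{MAPBernstein}, so that determining $\boldsymbol{\kappa}$ splits into three pieces: the diagonal Bernstein functions $\Phi_1,\Phi_2$ governing the ascent of $\xi$ on each side, the intensity matrix $\boldsymbol{\Lambda}$ of the modulating chain $J^+$, and the Laplace transforms $\boldsymbol{K}(\lambda)$ of the sign-change jumps. As a preliminary step I would read off from \eqref{MAPHG} the rate matrix $\boldsymbol{Q}$, reading its off-diagonal rates as the values at $z=0$ of the off-diagonal entries of $\boldsymbol{F}$; by the reflection formula these are proportional to $\sin(\pi\alpha\hat\rho)$ and $\sin(\pi\alpha\rho)$, whence the stationary law is $\boldsymbol{\pi}\propto(\sin(\pi\alpha\rho),\sin(\pi\alpha\hat\rho))$. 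This fixes $\boldsymbol{\Delta}_{\boldsymbol{\pi}}$ and already explains the ubiquitous prefactors $\sin(\pi\alpha\hat\rho)/\sin(\pi\alpha\rho)$ appearing in the statement.

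Next I would construct $\boldsymbol{\kappa}$ from first passage. Under the Lamperti--Kiu correspondence a new maximum of $\xi=\log\abs{X}$ is exactly a radial record of $X$, so the range and jumps of $(H^+,J^+)$ are encoded in the overshoot of $\abs{X}$ across a level together with the sign of $X$ at that instant. By self-similarity it suffices to analyse the first exit of $X$ from the unit ball $(-1,1)$, recording both the magnitude $\abs{X}>1$ of the exit position and its sign. The jump measures underlying $\Phi_i$ (same-sign records) and the sign-switch kernels $K_{i,j}$ (opposite-sign records) are then read from this exit law, and I expect them to produce exactly the densities proportional to $(1-{\rm e}^{-x})^{-(q+i)}(1+{\rm e}^{-x})^{-(p+j)}{\rm e}^{-\alpha x}$ that define $\kappa_{q+i,p+j}$ in \eqref{kappas}; the intensities $\Lambda_{1,2},\Lambda_{2,1}$ then emerge as $\kappa'_{\cdot,\cdot}(0+)$ normalisations, matching the constants sitting on the diagonal of $\boldsymbol{\kappa}$.

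Computing the exit law explicitly is where the Riesz--Bogdan--Zak transform of Section~\ref{sectBZ} does the real work: the Kelvin inversion $x\mapsto -1/x$ together with its endogenous time change turns the radial first-passage problem into a first-entrance/hitting problem for the Doob $h$-transform of $X$ with $h(y)=\abs{y}^{\alpha-1}$, which for $\alpha\in(0,1)$ is $X$ conditioned to be absorbed at the origin and is trivial at $\alpha=1$. On the MAP side this $h$-transform is an Esscher tilt shifting the spatial argument by $\alpha-1$, and I would exploit the resulting self-duality to obtain $\hat{\boldsymbol{\kappa}}$ almost for free: applying the same programme to the dual MAP $\hat{\boldsymbol{F}}(z)=\boldsymbol{\Delta}_{\boldsymbol{\pi}}^{-1}\boldsymbol{F}(-z)^{\rm T}\boldsymbol{\Delta}_{\boldsymbol{\pi}}$ of \eqref{dualF} reproduces the same family of Bernstein functions but evaluated at $\lambda+1-\alpha$, which is precisely the shift appearing in part (ii).

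Finally I would close the argument by checking that the constructed pair satisfies Theorem~\ref{WHF}, namely that $\boldsymbol{\Delta}_{\boldsymbol{\pi}}^{-1}\hat{\boldsymbol{\kappa}}(\iu\theta)^{\rm T}\boldsymbol{\Delta}_{\boldsymbol{\pi}}\boldsymbol{\kappa}(-\iu\theta)$ equals $-\boldsymbol{F}(\iu\theta)$ up to a constant; since both factors are genuine ascending ladder MAP exponents, uniqueness of the Wiener--Hopf factorisation then identifies them as the required ones. This verification reduces to gamma-function identities linking the integrals $\kappa_{q+i,p+j}$ to the ratios of gammas in \eqref{MAPHG} through the reflection formula. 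The main obstacle, and the step demanding the most care, is the explicit evaluation of the two-sided exit law of $X$ from the ball via the RBZ transform and its identification with the ${}_2\mathcal{F}_1$-type densities behind \eqref{kappas}; tracking the correct multiplicative constants and, in particular, the $1-\alpha$ shift produced by the $h$-transform is where errors are easiest to make.
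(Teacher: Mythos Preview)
Your outline shares the right structural skeleton with the paper (Lamperti--Kiu to translate ladder events into radial first passages, the stationary law $\boldsymbol\pi\propto(\sin(\pi\alpha\rho),\sin(\pi\alpha\hat\rho))$, and the Esscher/duality mechanism producing the $\lambda\mapsto\lambda+1-\alpha$ shift in part (ii)), but there are two genuine gaps in the route you propose for part (i).

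First, the key analytic device you omit is the Markov additive renewal limit theorem (Lemma~\ref{MRT}). Knowing the first exit law of $X$ from $(-1,1)$ started from $x$ and its sign at exit is not enough to separate the diagonal Bernstein functions $\Phi_i$ from the off-diagonal pieces $\Lambda_{k,j}\boldsymbol{K}(\lambda)_{k,j}$: that exit law only gives you their \emph{sum} (the quantities $\Theta_j(\lambda)$ in the paper). The paper instead takes the limit $x\to 0$ (equivalently $a\to\infty$ in MAP coordinates) so that Lemma~\ref{MRT} applies, and then isolates $\Phi_j$ by additionally tracking whether the radial record immediately preceding the exit was on the same side, i.e.\ the event $\{\overline{X}_{\tau^+_1-}>\lvert\underline{X}_{\tau^+_1-}\rvert\}$; see (\ref{dissleadto})--(\ref{similarlater}). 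Your sentence ``the jump measures underlying $\Phi_i$ and the sign-switch kernels $K_{i,j}$ are then read from this exit law'' does not supply this separation, and without it the individual matrix entries cannot be identified.

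Second, you misplace where the Riesz--Bogdan--\.Zak transform is actually used. For $\alpha\in(0,1]$ the paper computes $\boldsymbol\kappa$ without RBZ at all: the two-sided exit densities come straight from Rogozin's formula (Theorem~\ref{2sidedexit}), and the integrals are manipulated by elementary substitutions (see (\ref{inthespirit})). RBZ enters only for $\alpha\in(1,2)$ (Theorem~\ref{WHFMAPHG>1}), where one first computes $\boldsymbol\kappa^\circ$ for the conditioned process by converting radial first passage into first entrance of the \emph{original} stable process into $(-1,1)$, for which the \cite{KPW} density is available. Running RBZ in the direction you suggest for $\alpha\in(0,1)$ would land you on a first-entrance problem for the process conditioned to be absorbed at the origin, for which no ready formula exists. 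Your treatment of part~(ii), by contrast, is essentially what the paper does in Section~\ref{updual}: one observes $\hat{\boldsymbol F}(z)=\boldsymbol F^\circ(z)\vert_{\rho\leftrightarrow\hat\rho}$ and hence $\hat{\boldsymbol\kappa}(\lambda)=\boldsymbol\kappa^\circ(\lambda)\vert_{\rho\leftrightarrow\hat\rho}$, with the argument shift coming from (\ref{kappashift}). Finally, the closing verification you propose---matching gamma products to the integrals $\kappa_{q+i,p+j}$---is both unnecessary (you have already built the genuine ladder exponent, so uniqueness is automatic) and not carried out in the paper; those integrals are never evaluated in closed gamma form.
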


The next theorem deals with the case that $\alpha\in(1,2)$. For this we need to introduce another family of Bernstein functions. Define 
\[
\phi_{q+i,p+j}(\lambda) =\int_0^\infty (1-{\rm e}^{-\lambda u})\left\{\frac{((q+i)\vee (p+j)-1) }{(1-{\rm e}^{-u})^{q+i}(1+{\rm e}^{-u})^{p+j}} - \frac{(\alpha-1)}{2(1-{\rm e}^{-u})^{q}(1+{\rm e}^{-u})^{p}}\right\}{\rm e}^{- u}\d u,
\]
for $\lambda\geq 0$,  $q,p\in\{\alpha\rho, \alpha\rhohat\}$ and $i,j\in\{0,1\}$ such that $q+p = \alpha$ and  $i+j = 1$.
Note, again, that the density in curly brackets can easily be verified to be positive in all cases and is a Bernstein function since, as before,  the associated L\'evy density behaves like either $x^{-\alpha\rho -1}$ or $x^{-\alpha\rhohat-1}$ as $x\downarrow0$ and like ${\rm e}^{- x}$ as $x\uparrow\infty$. Once again, it is also subsequently straightforward to verify that the mean value $\kappa'_{q+i,p+j}(0+)$ is finite.

\begin{theorem}
\label{WHFMAPHG>1} When $\alpha\in(1,2)$,
we have the following two components to the factorisation in Theorem \ref{WHF}.

\bigskip

%\noindent (i) Define 
%\[
%c(\alpha) = 2^{\alpha-1}\frac{\Gamma(2-\alpha)}{\Gamma(1-\alpha\rhohat)\Gamma(1-\alpha\rho)}.
%\]
\noindent (i) Up to a multiplicative constant, the ascending ladder MAP exponent is given by 
\begin{eqnarray*}
\boldsymbol{\kappa}(\lambda) &=& \scriptsize{
  \left[
  \begin{array}{cc}
   \sin(\pi\alpha\rho)\phi_{\alpha\rho +1, \alpha\rhohat}(\lambda+\alpha -1)
+ \sin(\pi\alpha\rho)\phi_{\alpha\rhohat, \alpha\rho +1}'(0+)

    & 
 -\sin(\pi\alpha\rhohat)\dfrac{\phi_{\alpha\rhohat, \alpha\rho +1}(\lambda+\alpha-1)}{\lambda+\alpha-1}
    \\
    &\\
 - \sin(\pi\alpha\rho)\dfrac{\phi_{\alpha\rho, \alpha\rhohat +1}(\lambda+\alpha-1)}{\lambda+\alpha-1}

    & \sin(\pi\alpha\rhohat)\phi_{\alpha\rhohat +1, \alpha\rho}(\lambda+\alpha-1)+ \sin(\pi\alpha\rhohat)\phi_{\alpha\rho, \alpha\rhohat +1}'(0+)

  \end{array} 
  \right]},
\end{eqnarray*}
for $\lambda\geq 0$.

\bigskip

\noindent (ii) Up to a multiplicative constant, % ${c}(\alpha)(\sin(\pi\alpha\rho) + \sin(\pi\alpha\rhohat))/\sin(\pi\alpha\rho) \sin(\pi\alpha\rhohat) $, 
%where 
%\[
%\hat{c}(\alpha) = \int_0^1 (1-y)^{-\alpha\rhohat}(1+y)^{-\alpha\rho} \d y+\int_0^1 (1-y)^{-\alpha\rho}(1+y)^{-\alpha\rhohat} \d y.
%\]
the dual ascending ladder MAP exponent is given by 
\begin{eqnarray*}
\hat{\boldsymbol{\kappa}}(\lambda)& = &
\scriptsize{
  \left[
  \begin{array}{cc}
   \sin(\pi\alpha\rhohat)\phi_{\alpha\rhohat +1, \alpha\rho}(\lambda)
+ \sin(\pi\alpha\rhohat)\phi_{\alpha\rho, \alpha\rhohat +1}'(0+)

    & 
 -\sin(\pi\alpha\rhohat)\dfrac{\phi_{\alpha\rho, \alpha\rhohat +1}(\lambda)}{\lambda}
    \\
    &\\
 - \sin(\pi\alpha\rho)\dfrac{\phi_{\alpha\rhohat, \alpha\rho +1}(\lambda)}{\lambda}

    & \sin(\pi\alpha\rho)\phi_{\alpha\rho +1, \alpha\rhohat}(\lambda)+ \sin(\pi\alpha\rho)\phi_{\alpha\rhohat, \alpha\rho +1}'(0+)

  \end{array} 
  \right]},
\end{eqnarray*}
for $\lambda\geq 0$.

\end{theorem}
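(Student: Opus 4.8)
The plan is to mirror the three-step programme behind the companion result Theorem~\ref{WHFMAPHG} for $\alpha\in(0,1]$ --- asymptotic Markov additive renewal theory, reduction to complex first-passage problems for the stable process, and simplification of those via the Riesz--Bogdan--Zak transform of Section~\ref{sectBZ} --- the sole structural change being the $h$-transform that the Riesz--Bogdan--Zak transform produces. Because $X$ hits the origin for $\alpha\in(1,2)$, the relevant Doob conditioning is to \emph{avoid} the origin, with harmonic function $h(x)=\abs{x}^{\alpha-1}$, rather than to be absorbed there. The first thing I would record is that, at the level of the Lamperti-stable MAP, this conditioning is exactly the Esscher-type change of measure driven by $\mathrm{e}^{(\alpha-1)\xi}$: evaluating \eqref{MAPHG} at $z=\alpha-1$ and using the reflection formula gives
\[
\boldsymbol{F}(\alpha-1)=\frac{\Gamma(\alpha)}{\pi}\begin{pmatrix}-\sin(\pi\alpha\rho)&\sin(\pi\alpha\rhohat)\\ \sin(\pi\alpha\rho)&-\sin(\pi\alpha\rhohat)\end{pmatrix},
\]
whose right null vector is $v=(\sin(\pi\alpha\rhohat),\sin(\pi\alpha\rho))^{\mathrm{T}}$. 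Thus the transform is killing-free, consistent with avoidance of the origin, and the conditioned MAP $(\xi^{\circ},J)$ has matrix exponent $\boldsymbol{\Delta}_v^{-1}\boldsymbol{F}(z+\alpha-1)\boldsymbol{\Delta}_v$ with $\boldsymbol{\Delta}_v=\diag(v)$. This single observation is the source of the two fingerprints of the statement: the shift $\lambda\mapsto\lambda+\alpha-1$ in the arguments, and the replacement of the weight $\mathrm{e}^{-\alpha x}$ in \eqref{kappas} by the heavier weight $\mathrm{e}^{-u}$ in the definition of $\phi_{q+i,p+j}$.

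With this in hand I would compute $\boldsymbol{\kappa}$ in Section~\ref{ascending} in the form \eqref{MAPBernstein}, reading off the diagonal Bernstein exponents $\Phi_i$ and the off-diagonal jump kernels from the stationary overshoot law of the ascending ladder MAP, which through the Lamperti--Kiu representation is the overshoot distribution of $\log\abs{X}$ at first passage. The role of the Riesz--Bogdan--Zak transform is to trade first-passage quantities for $X$ itself --- delicate precisely because $X$ hits points --- for tractable quantities associated with the conditioned process $X^{\circ}$, and it is at this stage that the subtracted correction term in $\phi_{q+i,p+j}$ appears: for $\alpha>1$ it is exactly the compensation needed to keep the resulting L\'evy density integrable at infinity once the exponential weight is only $\mathrm{e}^{-u}$. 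The ubiquitous ratios $\sin(\pi\alpha\rho)/\sin(\pi\alpha\rhohat)$ then enter through the stationary vector of $J$, which from $\boldsymbol{F}(0)$ is seen to be $\boldsymbol{\pi}\propto(\sin(\pi\alpha\rho),\sin(\pi\alpha\rhohat))$, together with the conjugations by $\boldsymbol{\Delta}_{\boldsymbol\pi}$ and $\boldsymbol{\Delta}_v$.

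For part (ii) I would obtain $\hat{\boldsymbol{\kappa}}$ either by repeating the overshoot computation for the dual MAP in Section~\ref{updual}, or, more economically, by feeding the result of part (i) through the increment duality \eqref{dualF} and Lemma~\ref{duality}. That the shift sits in $\boldsymbol{\kappa}$ here, whereas in Theorem~\ref{WHFMAPHG} it sat in the dual factor $\hat{\boldsymbol{\kappa}}$ (as $\lambda+1-\alpha$), reflects the fact that for $\alpha\in(1,2)$ the Lamperti integral $\int_0^\infty\mathrm{e}^{\alpha\xi(u)}\,\dd u=\tau^{\{0\}}$ is finite, so that $\xi$ itself drifts to $-\infty$ and its \emph{ascending} ladder process is the killed one that carries the shift. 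Finally I would close by verifying that the proposed $\boldsymbol{\kappa}$ and $\hat{\boldsymbol{\kappa}}$ multiply, through $\boldsymbol{\Delta}_{\boldsymbol\pi}^{-1}\hat{\boldsymbol{\kappa}}(\iu\theta)^{\mathrm{T}}\boldsymbol{\Delta}_{\boldsymbol\pi}\boldsymbol{\kappa}(-\iu\theta)$, to reproduce $-\boldsymbol{F}(\iu\theta)$ from \eqref{MAPHG}, so that uniqueness of the factorisation in Theorem~\ref{WHF} identifies them as the genuine ladder exponents.

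The hardest part will be the two verifications underlying this last step. First, one must confirm that the $\phi$-matrices really are matrix Bernstein functions of the ladder form \eqref{MAPBernstein}: while the diagonal entries are Bernstein by the density asymptotics already noted, the off-diagonal entries such as $-\sin(\pi\alpha\rhohat)\phi_{\alpha\rhohat,\alpha\rho+1}(\lambda+\alpha-1)/(\lambda+\alpha-1)$ and its partner must be shown to be Laplace transforms of sub-probability jump kernels with nonnegative switching rates, which is exactly where the positivity of the bracketed density defining $\phi_{q+i,p+j}$ is needed. Second, after cancelling the common factor $\Gamma(\alpha-z)\Gamma(1+z)$, the product identity reduces to a family of Gamma-function and beta-integral identities relating the values of $\phi_{q+i,p+j}(\,\cdot\,+\alpha-1)$ and their derivatives at $0$ to the entries of \eqref{MAPHG}; carrying these out while controlling the point-hitting subtlety for $\alpha\in(1,2)$ --- the conceptual obstruction that only the avoid-origin conditioning and the Riesz--Bogdan--Zak transform remove --- is the main technical burden.
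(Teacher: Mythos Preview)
Your overall strategy matches the paper's --- Esscher-shift to the avoidance-conditioned MAP, apply the Markov additive renewal limit (Lemma~\ref{MRT}), use the Riesz--Bogdan--\.Zak transform, and for part (ii) exploit the duality $\hat{\boldsymbol{F}}(z)=\boldsymbol{F}^\circ(z)|_{\rho\leftrightarrow\hat\rho}$ --- but two points in your narrative are inverted and would cause trouble in execution.

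First, Lemma~\ref{MRT} is \emph{not} applied to the ascending ladder of $(\xi,J)$ itself. For $\alpha\in(1,2)$ that ladder process is killed, so there is no stationary overshoot law to read off. The paper instead applies Lemma~\ref{MRT} to the \emph{conditioned} MAP with exponent $\boldsymbol{F}^\circ$, obtaining $\boldsymbol{\kappa}^\circ(\lambda)$ directly, and only afterwards recovers $\boldsymbol{\kappa}(\lambda)=\boldsymbol{\Delta}_{\boldsymbol{\pi}^\circ}\,\boldsymbol{\kappa}^\circ(\lambda+\alpha-1)\,\boldsymbol{\Delta}_{\boldsymbol{\pi}^\circ}^{-1}$ via the Esscher relation. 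The shift appears at this last algebraic step, not inside the overshoot computation.

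Second, you have the direction of the Riesz--Bogdan--\.Zak transform backwards. The overshoot limits needed for $\boldsymbol{\kappa}^\circ$ are first-\emph{exit} probabilities for $X^\circ$ from $[-1,1]$; these are not directly available. The transform converts them into first-\emph{entrance} probabilities for the original stable process into a bounded interval --- $\mathbb{P}^\circ_x(X_{\tau^+_y}>{\rm e}^u,\,\tau^+_y<\tau^-_{-z})=\hat{\mathbb{P}}_{1/x}(X_{\tau^{(-1/z,1/y)}}\in(0,{\rm e}^{-u}))$ --- and those are known explicitly from \cite{KPW}. Taking $x\to 0$ sends the starting point to $\pm\infty$, and the limiting entrance density $\hat p_{\pm\infty}(y)=c(\alpha)(1+y)^{-\alpha\hat\rho}(1-y)^{-\alpha\rho}$ is what drives the calculation. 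The subtracted term in $\phi_{q+i,p+j}$ is not an integrability compensation: it drops out of differentiating $\hat p_{\pm\infty}$ when forming $\hat\beta(\theta)=\{\hat p'_{\pm\infty}(\theta)(\theta+1)+\hat p_{\pm\infty}(\theta)\}/2c(\alpha)$, the analogue for $\alpha>1$ of the computation in \eqref{similarlater}. Finally, the paper does not verify the factorisation by multiplying the factors back together; $\boldsymbol{\kappa}$ and $\hat{\boldsymbol{\kappa}}$ are obtained constructively and uniqueness in Theorem~\ref{WHF} is implicit.
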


The two main results above, and in particular the techniques used to prove them, offer many new insights into the analysis of stable processes. Classically, the Wiener--Hopf factorisation of L\'evy processes provides the basis of many proofs for fluctuation identities, both exact and asymptotic. Historically there has been less exploration in this respect for the case of MAPs. But, nonetheless, the same importance of the role of the Wiener--Hopf factorisation applies, with many proofs following analogous lines of reasoning to the L\'evy case. See for example the Appendix in \cite{P_0}. When one now takes account of the degree of explicit detail that we offer here with regard to the Lamperti-stable MAP Wiener--Hopf factorisation, as well as the pathwise embedding of the fluctuations of this MAP into the fluctuations  of stable process, one should expect to gain new results for the latter family of processes. Based on the computations derived in obtaining the matrix factorisations above, we offer some results in this respect at the end of this paper. Moreover, the robustness and applicability of the techniques we develop in proving the above two theorems also plays an important role in forthcoming work; see \cite{KRS} and \cite{Deep2}. 

\section{Non-symmetric Riesz--Bogdan--Zak transform}\label{sectBZ}

A key component in proving Theorem \ref{WHFMAPHG} will be the use of the so-called {\it Riesz--Bogdan--Zak} transform which we now outline. 
\begin{theorem}[Riesz--Bogdan--Zak transform]
\label{th:BZ} Suppose that $X$ is a stable process as outlined in the introduction.
Define
\[
\eta(t) = \inf\{s>0 : \int_0^s |X_u|^{-2\alpha}{\rm d}u >t\}, \qquad t\geq 0.
\]
Then, for all $x\in\mathbb{R}\backslash\{0\}$, $(-1/{X}_{\eta(t)})_{t\geq 0}$ under $\mathbb{P}_{x}$ is equal in law to $(X, \mathbb{P}_{-1/x}^\circ)$, where  
\begin{equation}
\left.\frac{{\rm d}\mathbb{P}^\circ_x}{{\rm d}\mathbb{P}_x}\right|_{\mathcal{F}_t} = \left(\frac{\sin(\pi\alpha\rho) + \sin(\pi\alpha\hat\rho)-(\sin(\pi\alpha\rho) - \sin(\pi\alpha\hat\rho) ){\rm sgn}(X_t)}{
\sin(\pi\alpha\rho) + \sin(\pi\alpha\hat\rho)-(\sin(\pi\alpha\rho) - \sin(\pi\alpha\hat\rho)){\rm sgn}(x)}\right)\left|\frac{X_t}{x}\right|^{\alpha -1}\mathbf{1}_{(t<\tau^{\{0\}})}
\label{updownCOM}
\end{equation}
and $\mathcal{F}_t := \sigma(X_s: s\leq t)$, $t\geq 0$. Moreover, the process $(X, \mathbb{P}^\circ_x)$, $x\in\mathbb{R}\backslash\{0\}$ is a self-similar Markov process with underlying MAP via the Lamperti-Kiu transform given by 
\begin{equation}
\boldsymbol{F}^\circ(z) =
 \left[
  \begin{array}{cc}
    - \dfrac{\Gamma(1-z)\Gamma(\alpha+z)}
      {\Gamma(1-\alpha\rho-z)\Gamma(\alpha\rho+ z)}
    & \dfrac{\Gamma(1-z)\Gamma(\alpha+z)}
      {\Gamma(\alpha\rho)\Gamma(1-\alpha\rho)}
    \\
    &\\
    \dfrac{\Gamma(1-z)\Gamma(\alpha+ z)}
      {\Gamma(\alpha\hat\rho)\Gamma(1-\alpha\hat\rho)}
    & - \dfrac{\Gamma(1-z)\Gamma(\alpha+z)}
      {\Gamma(1-\alpha\hat\rho-z)\Gamma(\alpha\hat\rho+z)}
  \end{array} 
  \right],
  \label{Fcirc}
\end{equation}
for $\Re(z)\in(-\alpha,1).$
\end{theorem}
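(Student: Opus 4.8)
The plan is to reduce the whole statement to two elementary manipulations of the matrix exponent $\boldsymbol{F}$ in \eqref{MAPHG}, exploiting the Lamperti--Kiu correspondence rather than the Kelvin transform at the level of generators. Throughout, write $\boldsymbol{v} = (\sin(\pi\alpha\rhohat), \sin(\pi\alpha\rho))^{\rm T}$, note $\boldsymbol{v} > 0$ since $0 < \alpha\rho,\alpha\rhohat < 1$, and set $\boldsymbol{\Delta}_{\boldsymbol v} = \diag(\boldsymbol{v})$ and $\boldsymbol{S} = \begin{pmatrix} 0 & 1 \\ 1 & 0\end{pmatrix}$. The engine of the proof is the pair of identities $\boldsymbol{F}^\circ(z) = \boldsymbol{\Delta}_{\boldsymbol v}^{-1}\boldsymbol{F}(z+\alpha-1)\boldsymbol{\Delta}_{\boldsymbol v} = \boldsymbol{S}\,\boldsymbol{F}(-z)\,\boldsymbol{S}$, valid on $\Re(z)\in(-\alpha,1)$, in which the first expression encodes the change of measure \eqref{updownCOM} and the second encodes the spatial inversion $x\mapsto -1/x$; both can be checked entrywise against \eqref{Fcirc} using the reflection formula $\Gamma(\alpha\rho)\Gamma(1-\alpha\rho)=\pi/\sin(\pi\alpha\rho)$ and the shift $\Gamma(\alpha-(z+\alpha-1))\Gamma(1+(z+\alpha-1))=\Gamma(1-z)\Gamma(\alpha+z)$.

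First I would establish that \eqref{updownCOM} is a genuine Doob $h$-transform and compute its MAP, which is the second assertion. Writing $H(y)=(\sin(\pi\alpha\rho)+\sin(\pi\alpha\rhohat)-(\sin(\pi\alpha\rho)-\sin(\pi\alpha\rhohat))\sgn(y))\abs{y}^{\alpha-1}$, so that $H(y)=2v_{[y]}\abs{y}^{\alpha-1}$ and the density in \eqref{updownCOM} is $H(X_t)/H(x)$ on $\{t<\tau^{\{0\}}\}$, the Lamperti--Kiu transform gives $H(X)=2v_{J}\,{\rm e}^{(\alpha-1)\xi}$. Substituting $z=\alpha-1$ into \eqref{MAPHG} shows $\boldsymbol{F}(\alpha-1)\boldsymbol{v}=\boldsymbol{0}$, so ${\rm e}^{(\alpha-1)\xi(t)}v_{J(t)}$ is a martingale for the MAP; transferring this through the time change $\varphi$ (an $(\mathcal{F}_t)$-stopping time for each fixed $t$) yields that $H(X_t)\mathbf{1}_{(t<\tau^{\{0\}})}$ is a $\mathbb{P}_x$-martingale and legitimises $\mathbb{P}^\circ$. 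As $H$ is homogeneous of degree $\alpha-1$, the scaling property is preserved, $(X,\mathbb{P}^\circ)$ is again an rssMp, and its MAP is the $h$-transform of $(\xi,J)$ by the harmonic vector $\boldsymbol{v}$ at Esscher parameter $\alpha-1$, whose exponent is $\boldsymbol{\Delta}_{\boldsymbol v}^{-1}\boldsymbol{F}(\,\cdot\,+\alpha-1)\boldsymbol{\Delta}_{\boldsymbol v}$; matching this against \eqref{Fcirc} proves the claim.

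Next I would prove the Riesz--Bogdan--Zak identity by computing the MAP of $Y_t:=-1/X_{\eta(t)}$ directly. Parametrising by the MAP clock $w$ and writing $\sigma(w)=\int_0^w{\rm e}^{\alpha\xi(u)}\dd u$ for the inverse of $\varphi$, the defining time change satisfies $\int_0^{\sigma(w)}\abs{X_r}^{-2\alpha}\dd r=\int_0^w{\rm e}^{-\alpha\xi(u)}\dd u$, while $-1/X_{\sigma(w)}=\exp(-\xi(w)+\iu\pi((3-J(w))+1))$. Hence, setting $\tilde\xi=-\xi$ and $\tilde J=3-J$, the process $Y$ is exactly the rssMp associated through \eqref{e:Lamp time change} to the MAP $(\tilde\xi,\tilde J)$, since $\int_0^w{\rm e}^{\alpha\tilde\xi(u)}\dd u$ is its Lamperti clock. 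Negation of $\xi$ sends $\boldsymbol{F}(z)$ to $\boldsymbol{F}(-z)$ and relabelling the chain conjugates by $\boldsymbol{S}$, so the MAP of $Y$ has exponent $\boldsymbol{S}\boldsymbol{F}(-z)\boldsymbol{S}$, which a second entrywise check identifies with \eqref{Fcirc}, hence with the Esscher exponent above. Since an rssMp in class \textbf{C.4} is determined by its MAP and its starting point, and $Y_0=-1/x$, it follows that $(Y_t)_{t\ge0}$ under $\mathbb{P}_x$ has the law of $(X,\mathbb{P}^\circ_{-1/x})$.

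The two entrywise verifications are routine consequences of the reflection and shift relations for the Gamma function. The main obstacles are the measure-theoretic points glossed above. The first is upgrading the MAP-level relation $\boldsymbol{F}(\alpha-1)\boldsymbol{v}=0$ to a true mean-one $\mathbb{P}_x$-martingale $H(X_t)\mathbf{1}_{(t<\tau^{\{0\}})}$, which must be handled uniformly across the three regimes $\alpha\in(1,2)$ (where single points are non-polar and $H$ conditions $X$ to avoid the origin, as in \cite{CPR}), $\alpha\in(0,1)$ (where $H$ conditions $X$ to be absorbed at the origin), and $\alpha=1$ (where $\boldsymbol{v}=(1,1)$, $H\equiv\text{const}$ and $\mathbb{P}^\circ=\mathbb{P}$). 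The second is justifying that the composite time substitution is well behaved up to $\tau^{\{0\}}$ and that the Lamperti--Kiu representation may be inverted, so that equality of matrix exponents passes to equality in law. Once the harmonicity of $H$ and the uniqueness in the Lamperti--Kiu representation are secured, the two algebraic expressions for $\boldsymbol{F}^\circ$ close both assertions simultaneously.
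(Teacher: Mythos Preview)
Your proposal is correct and follows essentially the same route as the paper's proof: both identify the change of measure \eqref{updownCOM} as the MAP Esscher transform at $\gamma=\alpha-1$ with Perron--Frobenius eigenvector $\boldsymbol v=(\sin(\pi\alpha\rhohat),\sin(\pi\alpha\rho))^{\rm T}$ (so $\chi(\alpha-1)=0$), compute the resulting exponent as $\boldsymbol\Delta_{\boldsymbol v}^{-1}\boldsymbol F(\,\cdot\,+\alpha-1)\boldsymbol\Delta_{\boldsymbol v}$, and then match this with the MAP obtained by reading off the Lamperti--Kiu representation of $-1/X_{\eta(\cdot)}$. The only cosmetic difference is that you encode the spatial inversion as $(\tilde\xi,\tilde J)=(-\xi,3-J)$ with exponent $\boldsymbol S\boldsymbol F(-z)\boldsymbol S$, whereas the paper first passes to $X^*=-X$ (whose MAP is $(\xi,3-J)$ with exponent $\boldsymbol F^*(z)=\boldsymbol S\boldsymbol F(z)\boldsymbol S$) and then takes the reciprocal; since $\boldsymbol F^*(-z)=\boldsymbol S\boldsymbol F(-z)\boldsymbol S$ these are literally the same computation, and your substitution $r=\sigma(v)$ for the time change is a compact equivalent of the paper's chain-rule argument for $\varphi^*\circ\eta$.
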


In the case that $X$ is a symmetric stable process (i.e. $c_+ = c_-$, equivalently $\rho=1/2$) the result is contained in the result of Bogdan and Zak \cite{BZ}, who deal  with isotropic stable processes in one or more dimensions. One may see the work of Bogdan and Zak, specifically the idea of spatial inversion through a sphere (here an interval),  as building on original results of M. Reisz, who used this technique to analyse potentials, cf. \cite[pp. 13-171]{R1}, \cite{R2} as well as the discussion in Section 3 of \cite{BGR}. 

It is straightforward  to deduce that $(X, \mathbb{P}^\circ_x)$, $x\in\mathbb{R}\backslash\{0\}$ is a rssMp, inheriting the index of self-similarltiy $\alpha$ from $(X, \mathbb{P}_x)$, $x\in \mathbb{R}\backslash\{0\}$. When $\alpha\in(1,2)$, \cite{CPR} have identified $(X, \mathbb{P}^\circ_x)$ to be the law of a stable process conditioned to avoid the origin when issued from $x\in\mathbb{R}\backslash\{0\}$. Their requirement that $\alpha\in(1,2)$ pertains to the fact that points are polar for $\alpha\in(0,1]$ and, accordingly, conditioning to avoid the origin makes no sense in the latter parameter regime. Nonetheless, the change of measure (\ref{updownCOM}) is still meaningful and gives preference to paths that approach the origin closely,  penalising paths that wander far from the origin. In fact, we shall see in due course from its Lamperti--Kiu representation that  $(X, \mathbb{P}^\circ_x)$, $x\in\mathbb{R}\backslash\{0\}$, is absorbed at the origin almost surely; see the forthcoming Remark \ref{absorbedat0}. In this sense, $(X, \mathbb{P}^\circ_x)$, $x\in\mathbb{R}\backslash\{0\}$, may be considered to be the stable process conditioned to be absorbed at the origin.  When $\alpha = 1$, one easily sees that $ \mathbb{P}^\circ_x =  \mathbb{P}_x$, $x\in\mathbb{R}$.

In order to prove Theorem \ref{th:BZ} we first need to briefly discuss the analogue of the exponential change of measure and  Esscher transform for MAPs.
Referring back to (\ref{MAPHG}), for each $z\in\mathbb{C}$ such that $\Re(z)\in(-1,\alpha)$, there exists a  leading real-valued eigenvalue of the matrix $\boldsymbol{F}(z)$, also called the
{\it Perron--Frobenius eigenvalue};
see \cite[\S XI.2c]{Asm-apq2} and \cite[Proposition 2.12]{Iva-thesis}.
If we denote this eigen value by $\chi(z)$, then it turns out that it
 is larger than the real part of all its other eigenvalue.
Furthermore, the
corresponding
right-eigenvector $\boldsymbol{v}(z)$ has strictly positive entries,
and can be normalised such that
$  \boldsymbol\pi \cdot \boldsymbol{v}(z) = 1$,
where we recall that $\boldsymbol\pi$ is the stationary distribution of
the underlying chain $J$.

The leading eigenvalue $\chi(z)$ features in the following probabilistic
result, which identifies a martingale (the analogue of the Wald martingale), a change of measure and the analogue of the Esscher transformation
for exponents of  L\'evy processes; cf.\ \cite[Proposition XI.2.4, Theorem XIII.8.1]{Asm-apq2}.
(Note that the result is still true  for general MAPs as introduced in Section \ref{rssmp}.)
\begin{proposition}
\label{p:mg and com}
Let $\mathcal{G}_{t} = \sigma\{(\xi(s), J(s)): s\leq t\}$, $t\geq 0$, and
\begin{equation} M(t,\gamma) =  {\rm e}^{\gamma (\xi(t)-\xi(0)) - \chi(\gamma)t}
    \frac{v_{J(t)}(\gamma)}{v_{J(0)}(\gamma)} ,
  \for t \ge 0, 
  \label{MAPCOM}\end{equation}
for some $\gamma$ such that $\chi(\gamma)$  is defined.
Then,
% \begin{enumerate}[(i)]
% \item
  $M(\cdot,\gamma)$ is a unit-mean martingale with respect to $\GGt$. Moreover, under the change of measure 
  \[
  \left.\frac{{\rm d}\mathbb{P}^\gamma_{x,i}}{{\rm d}\mathbb{P}_{x,i}}\right|_{\mathcal{G}_t} = M(t,\gamma),\qquad t\geq 0,
  \]
  the process $(\xi,J)$ remains in the class of MAPs and, where defined, its  characteristic exponent given by 
  \begin{equation}
  \boldsymbol{F}_\gamma(z) = \boldsymbol{\Delta}_{\boldsymbol v}(\gamma)^{-1}\boldsymbol{F}(z+\gamma)\boldsymbol{\Delta}_{\boldsymbol v}(\gamma) - \chi(\gamma)\mathbf{I}, 
  \label{Esscher}
  \end{equation}
  where $\mathbf{I}$ is the identity matrix and $\boldsymbol{\Delta}_{\boldsymbol v}(\gamma) = {\rm diag}(\boldsymbol{v}(\gamma))$. (The latter matrix we understand to mean the diagonal matrix with entries of $\boldsymbol{v}(\gamma)$ loaded on to its diagonal.)
  \end{proposition}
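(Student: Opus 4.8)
The plan is to reduce the whole statement to the Perron--Frobenius eigenrelation $\boldsymbol{F}(\gamma)\boldsymbol{v}(\gamma) = \chi(\gamma)\boldsymbol{v}(\gamma)$ combined with the semigroup identity $\mathbf{E}_{0,i}({\rm e}^{z\xi(t)};J(t)=j) = ({\rm e}^{\boldsymbol{F}(z)t})_{i,j}$ recorded after \eqref{e:MAP F}. Exponentiating the eigenrelation gives ${\rm e}^{\boldsymbol{F}(\gamma)t}\boldsymbol{v}(\gamma) = {\rm e}^{\chi(\gamma)t}\boldsymbol{v}(\gamma)$, whose $i$th row reads $\mathbf{E}_{0,i}({\rm e}^{\gamma\xi(t)}v_{J(t)}(\gamma)) = {\rm e}^{\chi(\gamma)t}v_i(\gamma)$. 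Because the additive component of a MAP has translation-invariant, state-dependent increments, the same identity holds from an arbitrary starting height $x$ once a factor ${\rm e}^{\gamma x}$ is extracted; substituting this into the definition \eqref{MAPCOM} of $M(t,\gamma)$ collapses it to $\mathbf{E}_{x,i}(M(t,\gamma)) = 1$, so $M(\cdot,\gamma)$ has unit mean. I would then obtain the martingale property from the fact that $M(\cdot,\gamma)$ is a multiplicative functional: writing $M(t+s,\gamma) = M(t,\gamma)\,\widetilde{M}(s)$ with $\widetilde{M}(s) = {\rm e}^{\gamma(\xi(t+s)-\xi(t))-\chi(\gamma)s}v_{J(t+s)}(\gamma)/v_{J(t)}(\gamma)$ depending only on the post-$t$ increment and the endpoint states, one conditions on $\mathcal{G}_t$ and applies \eqref{e:MAP}; on $\{J(t)=k\}$ the factor $\widetilde{M}(s)$ has the law of $M(s,\gamma)$ under $\mathbf{P}_{0,k}$ and is independent of $\mathcal{G}_t$, so its conditional mean is $1$ and $\mathbf{E}(M(t+s,\gamma)\mid\mathcal{G}_t)=M(t,\gamma)$.

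Next I would identify the tilted exponent by a direct computation. Using the change of measure and absorbing the ${\rm e}^{\gamma\xi(t)}$ factor of $M$ into the exponential,
\[
\mathbb{E}^\gamma_{0,i}({\rm e}^{z\xi(t)};J(t)=j) = \mathbf{E}_{0,i}\!\left({\rm e}^{(z+\gamma)\xi(t)-\chi(\gamma)t}\frac{v_{J(t)}(\gamma)}{v_i(\gamma)};J(t)=j\right) = {\rm e}^{-\chi(\gamma)t}\frac{v_j(\gamma)}{v_i(\gamma)}\bigl({\rm e}^{\boldsymbol{F}(z+\gamma)t}\bigr)_{i,j}.
\]
In matrix form the right-hand side is ${\rm e}^{-\chi(\gamma)t}\boldsymbol{\Delta}_{\boldsymbol v}(\gamma)^{-1}{\rm e}^{\boldsymbol{F}(z+\gamma)t}\boldsymbol{\Delta}_{\boldsymbol v}(\gamma)$; since $\chi(\gamma)\mathbf{I}$ commutes with everything and a similarity transform passes through the matrix exponential, this equals ${\rm e}^{\boldsymbol{F}_\gamma(z)t}$ with $\boldsymbol{F}_\gamma$ exactly as in \eqref{Esscher}. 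This identifies $\boldsymbol{F}_\gamma$ as the matrix exponent of $(\xi,J)$ under $\mathbb{P}^\gamma$, once that process is known to be a MAP.

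I expect the main obstacle to be the structural claim that $(\xi,J)$ remains a MAP under $\mathbb{P}^\gamma$, rather than any of the algebra above. I would settle it by the same conditioning argument used for the martingale property: for $A\in\mathcal{G}_t$ and bounded measurable $f,g$, the multiplicative decomposition gives
\[
\mathbb{E}^\gamma_{x,i}\bigl[\mathbf{1}_A\,f(\xi(t+s)-\xi(t))\,g(J(t+s))\bigr] = \mathbf{E}_{x,i}\bigl[M(t,\gamma)\mathbf{1}_A\,\widetilde{M}(s)\,f(\xi(t+s)-\xi(t))\,g(J(t+s))\bigr],
\]
and conditioning on $\mathcal{G}_t$ and using \eqref{e:MAP} shows that, given $J(t)=k$, the pair $(\xi(t+s)-\xi(t),J(t+s))$ is independent of $\mathcal{G}_t$ under $\mathbb{P}^\gamma$ and has the law of $(\xi(s)-\xi(0),J(s))$ under $\mathbb{P}^\gamma_{0,k}$ --- which is precisely the MAP property \eqref{e:MAP}. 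The remaining points of care are to invoke the Perron--Frobenius facts quoted before the proposition, namely that $\boldsymbol{v}(\gamma)$ has strictly positive entries so that $\boldsymbol{\Delta}_{\boldsymbol v}(\gamma)^{-1}$ exists and $M(\cdot,\gamma)$ is strictly positive, and to keep $\gamma$ within the strip $\Re(z)\in(-1,\alpha)$ on which $\chi$ and $\boldsymbol{v}$ are defined; the cited results \cite{Asm-apq2,Iva-thesis} provide the general version that we are specialising.
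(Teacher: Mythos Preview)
The paper does not actually supply a proof of this proposition; it is stated as a known result with a reference to \cite[Proposition XI.2.4, Theorem XIII.8.1]{Asm-apq2}, and the text moves directly to the proof of Theorem~\ref{th:BZ}. Your proposal is a correct and self-contained derivation: the Perron--Frobenius eigenrelation combined with the semigroup identity gives unit mean, the multiplicative-functional decomposition together with the defining MAP property \eqref{e:MAP} yields both the martingale property and the preservation of the MAP structure under $\mathbb{P}^\gamma$, and the matrix-exponential computation identifies $\boldsymbol{F}_\gamma$ exactly as in \eqref{Esscher}. One small remark: the strip $\Re(z)\in(-1,\alpha)$ you invoke at the end is specific to the Lamperti-stable example rather than to the proposition as stated, whose only hypothesis is that $\chi(\gamma)$ be defined; this does not affect the validity of your argument.
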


\begin{proof}[Proof of Theorem \ref{th:BZ}]
When we take $\boldsymbol F$ to be given by  (\ref{MAPHG}), we can compute explicitly the quantity $\boldsymbol\pi$ as well a $\boldsymbol{v}(\gamma)$ for a particular value of $\gamma$ that is of interest. We are interested in the case that $
gamma: = \alpha-1$. Note that $\gamma \in (-1,\alpha)$. A straightforward computation shows that, for $\Re(z)\in (-1,\alpha)$,
\[
{\rm det}\boldsymbol{F}(z)  =\frac{\Gamma(\alpha-z)^2\Gamma(1+z)^2}{\pi^2}\left\{
  \sin(\pi(\alpha\rho- z))\sin(\pi(\alpha\hat\rho- z)) 
  - \sin(\pi \alpha \rho) \sin(\pi\alpha\hat\rho)\right\},
\]
which has a root at $z = \alpha-1$. In turn, this implies that $\chi(\alpha-1)=0$. One also easily checks with the help of the reflection formula for gamma functions that 
\[
\boldsymbol{v}(\alpha -1) \propto\left[
\begin{array}{c}
\sin(\pi\alpha\hat\rho)\\
\sin(\pi\alpha\rho)
\end{array} \right]
\]
and, by considering $\boldsymbol{F}(0) = \boldsymbol{Q}$,
\begin{equation}
\boldsymbol{\pi} \propto%\frac{1}{\sin(\pi\alpha\rho) + \sin(\pi\alpha\hat\rho)} 
\left[
\begin{array}{c}
\sin(\pi\alpha\rho)\\
\sin(\pi\alpha\hat\rho) 
\end{array}
\right].
\label{pi}
\end{equation}
We see that with $\gamma = \alpha -1$, the change of measure (\ref{MAPCOM}) corresponds precisely to (\ref{updownCOM}) when $(\xi, J)$ is the MAP underlying the stable process. In particular, we can now say that the MAP associated to the process $(X, \mathbb{P}^\circ_x)$, $x\in\mathbb{R}\backslash\{0\}$, formally named $\boldsymbol{F}^\circ(z)$,  is equal to
\[
\boldsymbol{F}_{\alpha-1}(z) = \left[
  \begin{array}{cc}
    - \dfrac{\Gamma(1-z)\Gamma(\alpha+z)}
      {\Gamma(1-\alpha\rho-z)\Gamma(\alpha\rho+ z)}
    & \dfrac{\Gamma(1-z)\Gamma(\alpha+z)}
      {\Gamma(\alpha\rho)\Gamma(1-\alpha\rho)}
    \\
    &\\
    \dfrac{\Gamma(1-z)\Gamma(\alpha+ z)}
      {\Gamma(\alpha\hat\rho)\Gamma(1-\alpha\hat\rho)}
    & - \dfrac{\Gamma(1-z)\Gamma(\alpha+z)}
      {\Gamma(1-\alpha\hat\rho-z)\Gamma(\alpha\hat\rho+z)}
  \end{array} 
  \right],
\]
for $\Re(z)\in(-\alpha,1)$, where we have again used the reflection formula for the the gamma function to deal with the terms coming from $\boldsymbol\Delta_{\boldsymbol\upsilon}(\alpha-1)$ in (\ref{Esscher}).

Now let us turn our attention to the process $(-1/X_{\eta(t)})_{t\geq 0}$. First note that, if $X$ is an $(\alpha,\rho)$ stable process, then  $-X$ is a $(\alpha,\hat\rho)$ stable process.  Next, we show that $(- 1/X_{\eta(t)})_{t\geq 0}$ is a rssMp with index $\alpha$ by analysing its Lampert--Kiu decomposition. 

To this end, note that, if $(\xi^*, {J}^*)$ is the MAP that underlies  ${X}^*: =-X$, then its matrix exponent, say ${\boldsymbol F}^*(z)$, is equal to (\ref{MAPHG}) with the roles of $\rho$ and $\hat\rho$ interchanged. As ${X}^*$ is a rssMp, we have 
\[
{X}^*_t = \exp\left\{
{\xi}^*({\varphi}^*(t)) + \iu \pi (J^*(\varphi^*(t)) +1)
\right\},\qquad t<\tau^{\{0\}},
\]
where 
\[
\int_0^{{\varphi}^*(t)}{\rm e}^{\alpha{\xi}^*(s)}{\rm d}s = t. % \qquad t<\tau^{\{0\}}.
\]
Noting that 
\[
\int_0^{\eta(t)} {\rm e}^{-2\alpha {\xi}^*(\varphi^*(u))}
{\rm d}u = t, \qquad \eta(t)<\tau^{\{0\}},
\]
a straightforward differentiation of the last two integrals shows  that, respectively, 
\[
\frac{{\rm d}\varphi^*(t)}{{\rm d}t} = {\rm e}^{-\alpha {\xi}^*({\varphi}^*(t))}\text{ and }
\frac{{\rm d}\eta(t)}{{\rm d}t} ={\rm e}^{2\alpha {\xi}^*({\varphi}^*\circ\eta(t))}, \qquad\eta( t)<\tau^{\{0\}}.
\]
The chain rule now tells us that 
\[
\frac{{\rm d}({\varphi}^*\circ\eta)(t)}{{\rm d}t} = \left.\frac{{\rm d}\varphi^*(s)}{{\rm d}s}\right|_{s = \eta(t)}\frac{{\rm d}\eta(t)}{{\rm d}t}  = {\rm e}^{\alpha {\xi}^*({\varphi}^*\circ\eta(t))},
\]
and hence,
\[
\int_0^{{\varphi}^*\circ\eta(t)}{\rm e}^{-\alpha {\xi}^*(u)} {\rm d}u = t, \qquad \eta(t)<\tau^{\{0\}}.
\]
The qualification that $\eta(t)<\tau^{\{0\}}$ only matters when $\alpha\in(1,2)$. In that case, the fact that $\mathbb{P}_x(\tau^{\{0\}}<\infty) = 1$ for all $x\in\mathbb{R}$ implies that  $\lim_{t\to\infty}\xi^*_t = -\infty$ almost surely. As a consequence, it follows that $\int_0^\infty{\rm e}^{-\alpha {\xi}^*(u)} {\rm d}u = \infty $ and hence $\lim_{t\to\infty}{\varphi}^*\circ\eta(t) = \infty$. That is to say, we have $\lim_{t\to\infty}\eta(t)= \tau^{\{0\}}$.
Noting that for $k\in\mathbb{N}$, ${\rm e}^{-\iu\pi k} ={\rm e}^{\iu\pi k}$, it now follows that 
\[
\frac{1}{{X}^*_{\eta (t)}} %= \exp\{ -{\xi}^*({\varphi}^*\circ\eta(t)) -\iu\pi (J({\varphi}^*\circ\eta (t))+1)\} 
=  \exp\left\{ -{\xi}^*({\varphi}^*\circ\eta(t)) +\iu\pi (J^*({\varphi}^*\circ\eta (t))+1)\right\} , \qquad t<\tau^{\{0\}}
\]
is the representation of a rssMp whose underlying MAP has matrix exponent given by ${\boldsymbol{F}}^*(-z)$, whenever it is well defined. Recalling the definition of ${\boldsymbol F}^*(z)$, we see that the MAP that underlies $(-1/X_{\eta(t)})_{t\geq 0}$ via the Lamperti--Kiu transform is identically equal in law to the MAP with matrix exponent $\boldsymbol{F}_{\alpha-1}(z)$.
The proof is now complete.
\end{proof}

\section{The ascending ladder MAP}\label{ascending}

We shall derive the Matrix exponent $\boldsymbol{\kappa}$ by deriving each and every component of the matrices ${\rm diag}(\Phi_1(\lambda), \Phi_2(\lambda))$, ${\boldsymbol{\Lambda}}$ and $\boldsymbol{K}(\lambda)$, for $\lambda \geq 0$. In order to do this, we will make use of the Riesz--Bogdan--Zak transform from the previous section as well as some classical Markov additive renewal theory. 
In order to understand how the latter bears relevance, we need to briefly recall how the ascending ladder process $(H, J^+)$ emerges as a consequence of excursion theory and accordingly 
 is a non-decreasing MAP.

 Let  $Y^{(x)}_t = (x\vee\bar\xi(t)) - \xi(t)$, $t\geq 0$, where  $\bar\xi(t) = \sup_{s\leq t}\xi(s)$, $t\geq 0$. 
Following ideas that are well known from the theory of L\'evy processes, it is straightforward to show that, as a pair, the process $(Y^{(x)},J)$ is a strong Markov process.
For convenience, write $Y$ in place of $Y^{(0)}$.
We know by standard theory (c.f. Chapter   IV of \cite{BertoinLP}) there exists a local  time  of $(Y,J)$ at the point $(0,i)$, which we henceforth denote by $\{{L}^{(i)}_t: t\geq 0\}$. Now consider the process
\[
{L}_t := \sum_{i\in E} {L}^{(i)}_t, \qquad t\geq 0.
\]
Note that this local time can be constructed uniquely up to a multiplicative constant, which will turn out to be of pertinence later.
Since, almost surely, for each $i\neq j$ in $E$, the points of increase of  ${L}^{(i)}$ and ${L}^{(j)}$ are disjoint, it follows that  $({L}^{-1}, {H}^+, J^+): = \{({L}^{-1}_t, {H}^+(t), J^+(t)):t\geq 0\}$ is a (possibly killed)   {\it Markov additive bivariate subordinator}, where	
\[
{H}^+(t) : = \xi({L}^{-1}_t)\text{ and } J^+(t) : = J({L}^{-1}_t), \qquad \text{ if } {L}^{-1}_t<\infty,
\]
and ${H}^+(t) : = \infty$ and $J^+(t) : = \dagger$ (a cemetery state) otherwise.
Note, as a Markov additive subordinator, $({L}^{-1}, {H}^+, J^+)$ is a Markov additive process with co-ordinatewise non-decreasing paths such that, in each of the states of $J^+$,  the process $H$ evolves as a subordinator possibly killed at an independent and exponentially distributed time, whereupon it is sent to the cemetery state $\infty$ and $J^+$ is sent to $\dagger$. Killing rates may depend on the state of $J^+$. 

If we define
\[
\epsilon_t  = \{\epsilon_t(s): = \xi({L}^{-1}_{t-} + s) - \xi({L}^{-1}_{t-}) : s\leq {\Delta} {L}^{-1}_t\}, \qquad \text{ if }{\Delta} {L}^{-1}_{t}>0,
\]%\footnote{what is that definition? take entire excursion?}
and $\epsilon_t = \partial$, some artificial isolated state, otherwise, then it turns out that the process $\{\epsilon_t: t\geq 0\}$ is a (killed) Cox process.
Henceforth, write $n_i$ for the intensity measure of this Cox process when the underlying modulating chain $J^+$ is in state $i\in E$.
As a Markov additive subordinator, the process $({H}^+, J^+)$ has a matrix exponent given by
\[
{\mathbf E}_{0,i}\big[{\rm e}^{- \lambda {H}^+(t) }, J^+(t) = j\big] = \big({\rm e}^{- \boldsymbol{\kappa}(\lambda)t}\big)_{i,j},\qquad \lambda\geq 0,
\]
where  $\kappa^+(\lambda)$ was given in (\ref{MAPBernstein}).
Note in particular that, for $i=1,2$,  $\Phi_i( \lambda)$ is the subordinator Bernstein exponent that describes the movement of  ${H}^+$ when the modulating chain $J^+$ is in state $i$. Moreover, ${\boldsymbol{\Lambda}}$ is the intensity of $J^+$ and the matrix ${\boldsymbol{K}}(\lambda) = ({\boldsymbol{K}}(\lambda))_{i,j}$ is such that, for $i\neq j$ in $E$, its $(i,j)$-th entry is the Laplace transform of the additional jump incurred by  $H$ when the modulating chain changes state from $i$ to $j$. The diagonal elements of $\boldsymbol{K}(\lambda)$ are set to unity. 
In general, we can write 
\[
\Phi_i(\lambda) = {n}_i(\zeta = \infty) +  \texttt{b}_i \lambda +\int_0^\infty (1-{\rm e}^{-\lambda x})n_i( \epsilon_\zeta \in {\rm d}x, J(\zeta) = i, \zeta<\infty),\qquad \lambda\geq 0, 
\]
where $\texttt{b}_i\geq 0$ and $\zeta = \inf\{s\geq 0: \epsilon(s) >0\}$ for the canonical excursion $\epsilon$. 
For the case of the Lamperti-stable MAP, on account of the fact that the stable processes we consider in this paper do not creep, we can immediately set $ \texttt{b}_i = 0$ for $i=1,2$. In the case that $\alpha\in(0,1]$, points are polar and  the underlying stable process explores arbitrarily large distances from the origin. When $\alpha\in(1,2)$, the MAP in question represents the stable process until absorption at the origin, which occurs almost surely. We cannot rely on the range of the stable process until this time being unbounded and therefore   ${n}_i(\zeta = \infty) >0$, for $i=1,2$. %We will find a way to avoid this inconvenience by considering the case when $\alpha\in(1,2)$ with under an Esscher transform.

Let  us recall the following result, which is a special case of a general Markov additive renewal limit theorem given in the Appendix of \cite{P_0}. Such limit theorems are classical and can be found in many other contexts; see \cite{lalley}, \cite{kesten} and \cite{Alsmeyer, Alsmeyer1} to name but  a few.
First we fix some notation. For each $a>0$, let 
\[
T_a = \inf\{t>0 : H^+(t) >a\}.
\]

\begin{lemma}\label{MRT} Suppose that the ladder height process $(H^+, J^+)$ does not experience killing, that is to say, $L_\infty =\infty$.
Then for $x>0$ and $i,j \in \{1,2\}$, 
\begin{eqnarray}
\lefteqn{\lim_{a\to\infty} \mathbf{P}_{0,i}(H^+({T_a})-a \in {\rm d}x, J^+({T_a} )= j) }&&\notag\\
&&= \frac{1}{{\mathbf E}_{0,\pi}({H}^+(1))}\Big[ \pi_jn_j(\epsilon({\zeta}) > x, J(\zeta) =j, \zeta<\infty) + \pi_k {\Lambda}_{k,j} (1-F^+_{k,j}(x))\Big]{\rm d}x,
\label{decompose}
\end{eqnarray}
where  $k\in\{1,2\}$ is such that $k\neq j$  and $\int_{[0,\infty )}{\rm e}^{-\lambda x}F^+_{k,j}({\rm d}x)=\mathbf{E}[{\rm e}^{-\lambda U^+_{k,j}}]$.
As a more refined version of the above statement, we also have that 
\begin{eqnarray}
\lefteqn{\lim_{a\to\infty} \mathbf{P}_{0,i}(H^+({T_a})-a \in {\rm d}x, J^+({T_a} )= j, J^+({T_a-} )= j) }&&\notag\\
&&= \frac{1}{{\mathbf E}_{0,\pi}({H}^+(1))} \pi_jn_j(\epsilon({\zeta}) > x, J(\zeta) =j, \zeta<\infty)% + \pi_k {\Lambda}_{k,j} (1-F^+_{k,j}(x))\Big]
{\rm d}x,
\label{decompose_only_n}
\end{eqnarray}
For all limits above, we interpret the right hand side as zero when $\mathbf{E}_{0,\pi}({H}^+(1))  = \infty$.
\end{lemma}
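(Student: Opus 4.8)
The plan is to read this as the Markov additive analogue of the classical renewal theorem for the overshoot of a subordinator, and then to invoke the key renewal theorem for Markov additive processes (the general statement in the Appendix of \cite{P_0}; classically \cite{kesten}, \cite{lalley}, \cite{Alsmeyer}). The first step is structural. Because the stable process does not creep we have set $\texttt{b}_i = 0$, so $(H^+, J^+)$ is a driftless Markov additive subordinator, and under the hypothesis $L_\infty = \infty$ it is non-defective. Consequently $H^+$ crosses every level $a>0$ by a jump, $T_a$ is a jump time, and the overshoot $H^+(T_a) - a$ is almost surely strictly positive, consistent with the absolutely continuous form of the right-hand side. The Lévy system of $(H^+, J^+)$ records exactly two kinds of jumps: those occurring while the modulating chain sits in a state $j$, whose intensity measure is $n_j(\epsilon(\zeta) \in {\rm d}x, J(\zeta) = j, \zeta<\infty)$, and those accompanying a transition of $J^+$ from $k$ to $j$, which occur at rate $\Lambda_{k,j}$ with jump law $F^+_{k,j}$.

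Next I would decompose the crossing according to which type of jump carries $H^+$ over $a$. The crossing jump is a within-state jump precisely on the event $\{J^+(T_a-) = J^+(T_a) = j\}$; otherwise it accompanies a transition, and then $J^+(T_a-) = k$ for the unique $k \neq j$. Writing $U_{i,\cdot}$ for the state-to-state potential (renewal) measure of the MAP started from $(0,i)$, a compensation argument using the Lévy system — the exact analogue of the scalar subordinator identity ${\rm P}(\text{undershoot} \in {\rm d}w, \text{overshoot} \in {\rm d}x) = U(a - {\rm d}w)\,\nu(w + {\rm d}x)$ — expresses the finite-$a$ joint law of $(H^+(T_a)-a, J^+(T_a))$ as an integral of $U_{i,\cdot}$ against these two jump intensities, with the undershoot ranging over $[0,a]$ and the terminal state recorded by the type of crossing jump.

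Finally I would let $a \to \infty$ and apply the Markov additive key renewal theorem. By irreducibility and ergodicity of $J^+$, the potential measure satisfies $U_{i,j}({\rm d}v) \to (\pi_j / \mathbf{E}_{0,\pi}(H^+(1)))\,{\rm d}v$, independently of the starting state $i$, where the stationary mean increment $\mathbf{E}_{0,\pi}(H^+(1))$ plays the role of the mean $\mu$ in the scalar theorem. Substituting this limit and integrating out the undershoot turns the within-state jump intensity into its tail $n_j(\epsilon(\zeta) > x, J(\zeta)=j, \zeta<\infty)$ weighted by $\pi_j$, and turns the transition jump into $\Lambda_{k,j}(1 - F^+_{k,j}(x))$ weighted by $\pi_k$, which is exactly \eqref{decompose}; retaining only the event $\{J^+(T_a-)=J^+(T_a)=j\}$ discards the transition term and yields the refined identity \eqref{decompose_only_n}. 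The non-lattice hypothesis the renewal theorem needs is automatic here because the jump measures are absolutely continuous, and when $\mathbf{E}_{0,\pi}(H^+(1)) = \infty$ the renewal density degenerates to zero, matching the stated convention.

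The main obstacle is the renewal step itself: establishing convergence of the MAP potential measure with the correct stationary normalisation, uniformly enough to pass to the limit inside the integral against the jump intensities, while simultaneously tracking the two distinct jump mechanisms and the terminal state of $J^+$. Since the general form of this limit theorem is precisely what \cite{P_0} supplies, the genuine content of the argument is the identification of the Lévy system of $(H^+, J^+)$ from the excursion-theoretic description above and the bookkeeping that separates within-state overshoots from transition overshoots.
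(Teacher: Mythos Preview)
Your sketch is correct and aligns with how the result is obtained, but note that the paper does not actually supply a proof of this lemma: it simply states the result as ``a special case of a general Markov additive renewal limit theorem given in the Appendix of \cite{P_0}'' and refers to the classical literature \cite{kesten}, \cite{lalley}, \cite{Alsmeyer, Alsmeyer1}. Your proposal is essentially a fleshed-out account of why that cited theorem applies here --- identifying the L\'evy system of $(H^+,J^+)$, separating within-state jumps from transition jumps, and invoking the MAP key renewal theorem --- which is precisely the content the paper outsources to \cite{P_0}.
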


Recall that the local time $L$ can be constructed up to an arbitrary multiplicative constant. If one follows how this constant permeates through to the definition of $n_i$, $i=1,2$ and $\Lambda_{i,j}$, $i,j=1,2$, we see that they are also defined up to the same multiplicative constant. For this reason, we shall, without loss of generality assume that this constant is chosen such that $\mathbf{E}_{0,\pi}({H}^+(1))  =1$.

As will be explained in the forthcoming computations, the above Lemma provides the key to picking out the individual components that contribute to the matrices $\boldsymbol{\kappa}(\lambda)$ and $\boldsymbol{\hat{\kappa}}(\lambda)$ by decomposing the left-hand side of (\ref{decompose}) and (\ref{decompose_only_n}) in terms of the behaviour of the associated stable process. Ultimately what we shall see is that all computations boil down to identities that come from the so-called two-sided exit problem for the stable process, which is originally due to \cite{Rog} (see also Exercise 7.7 of \cite{Kyp}).

To elaborate in a little more detail, let us introduce the stopping times for the stable process: for each $a\in\mathbb{R}$, 
\[
\tau^+_a = \inf\{t>0: X_t >a\} \text{  and } \tau^-_a = \inf\{t>0: X_t <a\}.
\]
%Recall, moreover, that $\tau^{\{0\}} = \inf\{t>0 : X_t = 0\}$, which is finite almost surely if and only if $\alpha\in(1,2)$ and is otherwise almost surely infinite. 

\begin{theorem}\label{2sidedexit}
Suppose that $X$ is a stable process as described in the introduction. Then, 
for $\theta\geq 0$ and $x\in(0,1)$,
\begin{eqnarray*}
\lefteqn{\mathbb{P}_x(X_{\tau^+_1}-1 \in\d  \theta ;
\tau^+_1<\tau^-_0)}&&\\
&&= \frac{\sin(\pi\alpha \rho)}{\pi} (1-
x)^{\alpha\rho}x^{\alpha\rhohat}
\theta^{-\alpha\rho}(\theta+1)^{-\alpha\rhohat}(\theta+1-x)^{-1}\d\theta.
\end{eqnarray*}
Equivalently, by scaling and translation, for $\theta\geq 0$ and $x\in (-1,1)$, 
\begin{eqnarray*}
\lefteqn{\mathbb{P}_x(X_{\tau^+_1}-1 \in\d  \theta ;
\tau^+_1<\tau^-_{-1})}&&\\
&&= \frac{\sin(\pi\alpha \rho)}{\pi} (1-
x)^{\alpha\rho}(1+x)^{\alpha\rhohat}
\theta^{-\alpha\rho}(\theta+2)^{-\alpha\rhohat}(\theta+1-x)^{-1}{\d}\theta.
\end{eqnarray*}
\end{theorem}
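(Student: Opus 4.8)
The plan is to recognise the two-sided exit problem of Theorem \ref{2sidedexit} as a classical fluctuation identity for stable processes and to derive it via the Riesz--Bogdan--Zak transform of Theorem \ref{th:BZ}, which converts the two-sided exit from an interval into a one-sided first-passage problem for a conditioned (and hence more tractable) stable process. Since the second display follows from the first by the scaling property (\ref{Psi_alpha_rho_parameterization}) applied with $c=2$ together with a translation sending $(0,1)$ to $(-1,1)$, I would concentrate entirely on the first identity, for $x\in(0,1)$, and obtain the overshoot density of $X$ started from $x$ at the instant it first exceeds $1$ while remaining above $0$.

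First I would set up the Kelvin-type spatial inversion $x\mapsto -1/x$ implemented by the map $t\mapsto \eta(t)$ of Theorem \ref{th:BZ}. The key geometric observation is that the inversion $y\mapsto -1/y$ is a decreasing bijection of $(0,\infty)$ which, composed with the time change, turns the exit of the interval $(0,1)$ through its upper boundary $1$ into the exit of a half-line through a single finite level for the image process $(X,\LevP^\circ)$. Concretely, staying in $(0,1)$ before crossing $1$ upwards corresponds, after inversion, to the image process (started from $-1/x$) avoiding a fixed interval and hitting a level, so that the two-sided exit event $\{\tau^+_1<\tau^-_0\}$ and the overshoot $X_{\tau^+_1}-1$ are mapped to a one-sided passage event and a corresponding overshoot for the conditioned process. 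The Radon--Nikodym density (\ref{updownCOM}) then lets me transfer the law: evaluating the change of measure on the relevant stopping time introduces exactly the factors $|X_t/x|^{\alpha-1}$ and the $\sin(\pi\alpha\rho),\sin(\pi\alpha\hat\rho)$ combination, which I expect to recombine with the Jacobian of the inversion to produce the stated product $(1-x)^{\alpha\rho}x^{\alpha\hat\rho}\,\theta^{-\alpha\rho}(\theta+1)^{-\alpha\hat\rho}(\theta+1-x)^{-1}$.

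The next step is to supply the one-sided input. For a stable process conditioned in the manner of $(X,\LevP^\circ)$, the law of the position at first passage across a level (equivalently, a hitting/overshoot density on a half-line) is explicitly known in terms of the parameters $\alpha\rho$ and $\alpha\hat\rho$; I would quote or re-derive this from the classical first-passage theory for stable processes (the Rogozin-type identity cited after Lemma \ref{MRT}, see \cite{Rog} and Exercise 7.7 of \cite{Kyp}) and from the explicit form of the conditioned process whose Lamperti-stable MAP is $\boldsymbol{F}_{\alpha-1}$ in (\ref{Fcirc}). Pulling this density back through the inversion $y\mapsto -1/y$ requires the change-of-variables factor $\dd(-1/y)=y^{-2}\dd y$, applied at the relevant points ($x$, $1$, and the overshoot coordinate), and this is the computation that generates the three distinct negative-power factors together with the denominator $(\theta+1-x)^{-1}$.

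The main obstacle will be bookkeeping the inversion carefully: correctly identifying which finite levels the boundaries $0$ and $1$ map to under $y\mapsto -1/y$, tracking how the upper-exit overshoot variable $\theta=X_{\tau^+_1}-1$ transforms, and assembling the three Jacobian factors together with the $|X_t/x|^{\alpha-1}$ term from (\ref{updownCOM}) so that the powers collapse to precisely $\alpha\rho$, $\alpha\hat\rho$ and $-\alpha\rho$, $-\alpha\hat\rho$, $-1$ in the final answer. A secondary subtlety is the $\alpha\in(1,2)$ versus $\alpha\in(0,1]$ dichotomy: for $\alpha\in(0,1]$ the conditioned process $(X,\LevP^\circ)$ is absorbed at the origin (Remark \ref{absorbedat0}) rather than genuinely avoiding it, so I would check that the first-passage input and the change of measure remain valid and finite across all admissible $(\alpha,\rho)$, most cleanly by verifying the identity analytically in $\theta$ and $x$ and appealing to the normalisation $c=\cos(\pi\alpha(\rho-1/2))$ fixed in the introduction. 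Once the identity is verified for $x\in(0,1)$, the stated equivalent form on $(-1,1)$ follows immediately by the scaling and translation invariance of the stable process.
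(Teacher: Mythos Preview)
The paper does not prove Theorem \ref{2sidedexit}: it is quoted as a classical result, attributed to Rogozin \cite{Rog} (and Exercise 7.7 of \cite{Kyp}), and then used as input for the remainder of the computations. So there is no ``paper's own proof'' to compare against; you are proposing an independent derivation where the author simply cites the literature.

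Your route via the Riesz--Bogdan--\.Zak transform is a genuinely different idea and, with the right input, it does work. Under $y\mapsto -1/y$ the process started at $x\in(0,1)$ is sent to $-1/x\in(-\infty,-1)$, and the event $\{\tau^+_1<\tau^-_0,\ X_{\tau^+_1}=1+\theta\}$ becomes, for the image process under $\mathbb{P}^\circ_{-1/x}$, the event that it first crosses the single level $-1$ upward and lands at $-1/(1+\theta)\in(-1,0)$. Undoing the change of measure (\ref{updownCOM}) on this stopping time then reduces everything to the \emph{one-sided} overshoot law $\mathbb{P}_{-1/x}(X_{\tau^+_{-1}}+1\in\d\phi)=\tfrac{\sin(\pi\alpha\rho)}{\pi}(1/x-1)^{\alpha\rho}\phi^{-\alpha\rho}(\phi-1+1/x)^{-1}\d\phi$, and the substitution $\phi=\theta/(1+\theta)$ together with the $|X_t/x|^{\alpha-1}$ factor collapses to exactly the stated density. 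This is a nice argument and buys you a short proof directly from the Wiener--Hopf factorisation $\kappa(\lambda)=\lambda^{\alpha\rho}$ of the introduction (from which the one-sided overshoot follows by the standard subordinator computation).

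However, as written your proposal has a circularity: the ``Rogozin-type identity cited after Lemma \ref{MRT}'' that you invoke for the one-sided input \emph{is} Theorem \ref{2sidedexit}. The reference \cite{Rog} is the two-sided exit formula itself, not the one-sided overshoot. You must instead appeal to the one-sided overshoot distribution for a stable process (equivalently, the Dynkin--Lamperti overshoot for the $\alpha\rho$-stable ascending ladder subordinator), which is independent of Rogozin's two-sided result. Your geometric description (``avoiding a fixed interval and hitting a level'') is also slightly off: the image problem is a pure one-sided crossing of $-1$ with a restriction on the landing position, not an interval-avoidance problem. Once you replace the circular citation by the genuine one-sided input and tighten the geometry, the argument goes through cleanly for all admissible $(\alpha,\rho)$, with no need to split into $\alpha\in(0,1]$ versus $\alpha\in(1,2)$.
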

\noindent Note, we have given two forms of the expression in the theorem   above purely for convenience as they will both be used in the computations below.

\section{The ascending MAP with $\alpha\in(0,1]$}\label{smalla}

%Recall that, when a MAP has a well defined stationary mean, $\mathbf{E}_{0,\pi}[\xi(1)]$,  the Strong Law of Large Numbers dictates that $\lim_{t\to\infty}\xi_t/t = \mathbf{E}_{0,\pi}[\xi(1)]$.
When $\alpha\in(0,1]$, we have that points are polar for the stable process $X$. In particular, as a rssMp,  the origin is not accessible. Recalling that MAPs respect the same trichotomy as L\'evy processes in terms of drifting and oscillating, we thus have that the Lamperti-stable  MAP satisfies $\limsup_{t\to\infty}\xi(t)=\infty$.  This means that the conditions of Lemma \ref{MRT} are satisfied. 

In order to apply the aforesaid lemma, write $X^{(x)}$ to indicate the initial value of the stable process, i.e. $X_0^{(x)} =x\in\mathbb{R}\backslash\{0\}$. Thanks to self-similarity, and the Lamperti--Kiu representation, we have, for example, that, on the event $\{J^+({T_a}) = 1\}$, i.e. $\{X_{\tau^+_{{\rm e}^a} \wedge \tau^-_{-{\rm e}^a}} >{\rm e}^a \}$,
\begin{equation}
\exp\{H^+({T_a})-a \} = \frac{X^{(x)}_{\tau^+_{{\rm e}^a} \wedge \tau^-_{-{\rm e}^a}}}{{\rm e}^a}=^d X^{(x{\rm e}^{-a})}_{\tau^+_{1} \wedge \tau^-_{-1}}
\label{scaleit}
 \end{equation}
 Hence, taking limits,  we have, for $i, j\in\{1,2\}$ and $u>0$,
 \[
 \lim_{a\uparrow \infty}\mathbf{P}_{0,i}(H^{+}(T_{a})-a >u , J^{+}(T_{a})=1)=\lim_{x\to0}\mathbb{P}_x(X_{\tau^+_1}>{\rm e}^{u}, \tau_{1}^{+}<\tau_{-1}^{-}) .
 \]
Moreover, if we write $\overline{X}_t = \sup_{s\leq t}X_s$ and $\underline{X}_t = \inf_{s\leq t}X_s$, $t\geq 0$, then we also have
\begin{eqnarray}
\lefteqn{\pi_1n_1(\epsilon({\zeta}) > u, J(\zeta) =1, \zeta<\infty)}&&\notag\\
&& =-\frac{\d }{\d u} \lim_{x\to0}\mathbb{P}_x\left(X_{\tau^+_1}>{\rm e}^u, \overline{X}_{\tau^+_1- }>|\underline{X}_{\tau^+_1-}|, \tau^+_1<\tau^-_{-1}\right)\notag\\
&&=-\frac{\d}{\d u} \int_0^1 \mathbb{P}(X_{\tau^+_1}>{\rm e}^u, \overline{X}_{\tau^+_1- }\in \d z,  \tau^+_1<\tau^-_{-z} ).
\label{dissleadto}
\end{eqnarray}

In order to progress our computations further and reach the goal of producing an identity for $\boldsymbol{\kappa}(\lambda)$, 
we shall first establish some intermediary results, starting with the following.
\begin{lemma}\label{PhiLemma} For $\lambda\geq 0$,
\[
\Phi_1(\lambda) =%\alpha\rho
\frac{\sin(\pi\alpha\rho) + \sin(\pi\alpha\rhohat)}{\pi}\kappa_{\alpha\rho+1,\alpha\rhohat}(\lambda).
%\int_0^\infty (1-{\rm e}^{-\lambda w})\frac{{\rm e}^{-\alpha w}}{(1+{\rm e}^{-w})^{\alpha\hat\rho}(1-{\rm e}^{-w})^{\alpha\rho+1}}\d w
\]
%and hence, by considerations of symmetry,
%\[
%\pi_2 \Phi_2(\lambda) = \alpha\rhohat\frac{\sin(\pi\alpha\rhohat)}{\pi}
%\kappa_{\alpha\rhohat+1, \alpha\rho}(\lambda)\qquad \lambda\geq 0.%\int_0^\infty (1-{\rm e}^{-\lambda w})\frac{{\rm e}^{-\alpha w}}{(1+{\rm e}^{-w})^{\alpha\rho}(1-{\rm e}^{-w})^{\alpha\rhohat+1}}\d w
%\]
\end{lemma}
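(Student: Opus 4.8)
The plan is to read off $\Phi_1$ from its L\'evy measure. Since $\alpha\in(0,1]$ the stable process has polar points and an unbounded range, so the conditions of Lemma \ref{MRT} hold ($L_\infty=\infty$, no killing), there is no creeping (hence $\mathtt{b}_1=0$), and because $\limsup_{t\to\infty}\xi(t)=\infty$ there is no terminal excursion, i.e. $n_1(\zeta=\infty)=0$. Thus $\Phi_1$ is a pure-jump Bernstein function,
\[
\Phi_1(\lambda)=\int_0^\infty(1-{\rm e}^{-\lambda x})\,n_1(\epsilon_\zeta\in{\rm d}x, J(\zeta)=1,\zeta<\infty),\qquad\lambda\ge0,
\]
and it suffices to compute the tail $n_1(\epsilon_\zeta>u, J(\zeta)=1,\zeta<\infty)$ and match it with the tail of the L\'evy measure of $\kappa_{\alpha\rho+1,\alpha\rhohat}$ in \eqref{kappas}, noting that there $(\alpha\rho+1)\vee\alpha\rhohat-1=\alpha\rho$.

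By \eqref{dissleadto} this amounts to evaluating $\int_0^1\mathbb{P}_0(X_{\tau^+_1}>{\rm e}^u,\overline X_{\tau^+_1-}\in{\rm d}z,\tau^+_1<\tau^-_{-z})$. The obstruction is that the running maximum $\overline X_{\tau^+_1-}$ and the lower barrier $-z$ are tied together, so Theorem \ref{2sidedexit} cannot be applied directly. To decouple them I would introduce an auxiliary upper threshold $\zeta\in(0,1]$ and use the pathwise identity
\[
\{X_{\tau^+_1}>{\rm e}^u,\ \overline X_{\tau^+_1-}<\zeta,\ \tau^+_1<\tau^-_{-z}\}=\{X_{\tau^+_\zeta}>{\rm e}^u,\ \tau^+_\zeta<\tau^-_{-z}\},
\]
which holds because $\zeta\le1<{\rm e}^u$: if the maximum stays below $\zeta$ until level $1$ is crossed, the crossing jump must clear $\zeta$ and $1$ at once, so first passage above $\zeta$ coincides with first passage above $1$. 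Differentiating the left-hand probability in $\zeta$ yields the law of $\overline X_{\tau^+_1-}$, and setting $\zeta=z$ reinstates the coupling, giving
\[
\int_0^1\mathbb{P}_0(X_{\tau^+_1}>{\rm e}^u,\overline X_{\tau^+_1-}\in{\rm d}z,\tau^+_1<\tau^-_{-z})=\int_0^1\Big[\tfrac{\partial}{\partial\zeta}\,\mathbb{P}_0(X_{\tau^+_\zeta}>{\rm e}^u,\tau^+_\zeta<\tau^-_{-z})\Big]_{\zeta=z}{\rm d}z.
\]

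The inner probability is now a two-sided exit through the top of $(-z,\zeta)$. As $X$ has stationary independent increments and is self-similar of index $\alpha$, the affine map $y\mapsto(y+z)/(\zeta+z)$ transports it to the exit problem from $(0,1)$ started at $x=z/(\zeta+z)$, so that the first form of Theorem \ref{2sidedexit} applies with overshoot integrated from $\theta^*=({\rm e}^u-\zeta)/(\zeta+z)$ to infinity. What remains is mechanical: differentiate in $\zeta$, set $\zeta=z$ (so $x=\tfrac12$), perform the $z$-integral over $(0,1)$, and apply $-{\rm d}/{\rm d}u$. A substitution of the form $s={\rm e}^{-u}$ then displays the outcome as a constant multiple of ${\rm e}^{-\alpha u}\big/\{(1-{\rm e}^{-u})^{\alpha\rho+1}(1+{\rm e}^{-u})^{\alpha\rhohat}\}$, which is exactly the L\'evy density of $\kappa_{\alpha\rho+1,\alpha\rhohat}$. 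Tracking constants — the factor $\sin(\pi\alpha\rho)/\pi$ from Theorem \ref{2sidedexit} and the division by the stationary mass $\pi_1=\sin(\pi\alpha\rho)/(\sin(\pi\alpha\rho)+\sin(\pi\alpha\rhohat))$ from \eqref{pi} — leaves precisely the prefactor $(\sin(\pi\alpha\rho)+\sin(\pi\alpha\rhohat))/\pi$, which proves the claim.

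I expect the main obstacle to be the decoupling step together with the $\zeta$-differentiation: verifying the pathwise identity (which relies on $\zeta\le1$ and the absence of creeping) and then differentiating an integral whose integrand, limits, and prefactor all depend on $\zeta$ and $z$ is where sign and bookkeeping errors are most likely. The concluding recognition of the hypergeometric-type kernel after the substitution $s={\rm e}^{-u}$ is delicate but purely computational.
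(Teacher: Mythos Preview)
Your proposal is correct and follows essentially the same route as the paper: the paper introduces an auxiliary level $y$ (your $\zeta$), uses the same pathwise identity $\{X_{\tau^+_1}>{\rm e}^u,\ \overline X_{\tau^+_1-}\le y,\ \tau^+_1<\tau^-_{-z}\}=\{X_{\tau^+_y}>{\rm e}^u,\ \tau^+_y<\tau^-_{-z}\}$, rescales to apply Theorem~\ref{2sidedexit}, differentiates in $y$ and sets $y=z$, and then reduces the Laplace transform to $\kappa_{\alpha\rho+1,\alpha\rhohat}$ via the substitutions $\theta=z{\rm e}^{-u}$ and $\theta={\rm e}^{-w}$. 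Your anticipated difficulty is exactly the one the paper handles, and the constants match for the reason you give.
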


\begin{proof}
We start by  noting that, for $y\in[0,1]$, 
\begin{eqnarray*}
\lefteqn{\mathbb{P}(X_{\tau^+_1}>{\rm e}^u, \overline{X}_{\tau^+_1- } \leq y,  \tau^+_1<\tau^-_{-z})}&&\\
&&=\mathbb{P}(X_{\tau^+_y}>{\rm e}^u,\tau^+_y<\tau^-_{-z} )\\
%&&=\mathbb{P}_{z}\left(\frac{X_{\tau^+_{y+z}}-(y+z)}{y+z}>\frac{{\rm e}^u -y}{y+z},\tau^+_{y+z}<\tau^-_{0} \right)\\
&&=\mathbb{P}_{z/(z+y)}\left(X_{\tau^+_{1}}-1>\frac{{\rm e}^u-y}{z+y},\tau^+_{1}<\tau^-_{0} \right)\\
&&=\frac{\sin(\pi\alpha\rho)}{\pi}\int_{\frac{{\rm e}^u-y}{z+y}}^\infty 
\left(\frac{y}{y+z}\right)^{\alpha\rho}\left(\frac{z}{z+y}\right)^{\alpha\rhohat}
 t^{-\alpha\rho}(t+1)^{-\alpha\rhohat}\left( t+1 - \frac{z}{z+y}\right)^{-1}{\rm d}t,
\end{eqnarray*}
where the penultimate probability follows from scaling and the final equality follows from Theorem \ref{2sidedexit}. Hence, it follows that 
\begin{eqnarray}
\lefteqn{\pi_1n_1(\epsilon({\zeta}) > u, J(\zeta) =1, \zeta<\infty)}&&\notag\\
&&=- \int_0^1\frac{\d}{\d y}\frac{\d}{\d u} \left.\mathbb{P}(X_{\tau^+_1}>{\rm e}^u, \overline{X}_{\tau^+_1- }\leq y,  \tau^+_1<\tau^-_{-z} )\right|_{y=z}\d z\notag\\
&&=\frac{\sin(\pi\alpha\rho)}{\pi}\int_0^1{\rm e}^u \left.\frac{\d}{\d y} y^{\alpha\rho}z^{\alpha\rhohat}({\rm e}^{u}-y)^{-\alpha\rho}({\rm e}^u +z)^{-\alpha\rhohat} \right|_{y=z}\d z\notag\\
&&=\alpha\rho\frac{\sin(\pi\alpha\rho)}{\pi}\int_0^1 {\rm e}^u z^{\alpha -1}({\rm e}^u - z)^{-\alpha\rho -1}({\rm e}^u+z)^{-\alpha\rhohat}\d z
\label{similarlater}
\end{eqnarray}

Finally we can now take Laplace transforms and compute
\begin{eqnarray}
\pi_1\Phi_1(\lambda) &=&\lambda \int_0^\infty {\rm e}^{-\lambda u}
\pi_1n_1(\epsilon({\zeta})>u,J(\zeta)=1, \zeta<\infty)
\notag\\
&=&{\lambda \alpha\rho}\frac{\sin(\pi\alpha\rho)}{\pi}\int_0^1\int_0^\infty {\rm e}^{-(\lambda+\alpha) u} z^{\alpha-1} (1+z{\rm e}^{-u})^{-\alpha\hat\rho}(1-z{\rm e}^{-u})^{-(\alpha\rho+1)}\d u \, \d z\notag\\
%&=&{\lambda \alpha\rho}\frac{\sin(\pi\alpha\rhohat)}{\pi}\int_0^1\int_0^z \theta^{\lambda +\alpha-1} z^{-\lambda-\alpha}z^{\alpha -1}(1+\theta)^{-\alpha\hat\rho}(1-\theta)^{-(\alpha\rho+1)}{\d\theta}\, \d z\notag\\
%&=&{\alpha\rho}\frac{\sin(\pi\alpha\rhohat)}{\pi}\int_0^1   \theta^{\lambda + \alpha -1}(1+\theta)^{-\alpha\hat\rho}(1-\theta)^{-(\alpha\rho+1)}\left(  \int_0^1 \mathbf{1}_{(\theta\leq z)}\lambda z^{-\lambda - 1} \d z \right) \d\theta\notag\\
%&=&{ \alpha\rho}\frac{\sin(\pi\alpha\rhohat)}{\pi}\int_0^1  (\theta^{ \alpha -1}- \theta^{\lambda +\alpha -1})(1+\theta)^{-\alpha\hat\rho}(1-\theta)^{-(\alpha\rho+1)} \d\theta \notag\\
&=&{\alpha\rho}\frac{\sin(\pi\alpha\rho)}{\pi} \int_0^\infty (1-{\rm e}^{-\lambda w})\frac{{\rm e}^{-\alpha w}}{(1+{\rm e}^{-w})^{\alpha\hat\rho}(1-{\rm e}^{-w})^{\alpha\rho+1}}\d w\notag\\
&=&
\frac{\sin(\pi\alpha\rho)}{\pi}\kappa_{\alpha\rho+1, \alpha\rhohat}(\lambda),
\label{inthespirit}
\end{eqnarray}
where,  in order to get  the third equality,  we have first  substituted $\theta = z{\rm e}^{-u}$ in the second equality, then used Fubini's Theorem and finally substituted $\theta = {\rm e}^{-w}$. The result follows once we recall that $\pi_1 = \sin(\pi\alpha\rho)/ (\sin(\pi\alpha\rho) + \sin(\pi\alpha\rhohat))$.
\end{proof}

We are now in a position to identify $\boldsymbol{\kappa}(\lambda)$ as presented in Theorem \ref{WHFMAPHG} when $\alpha\in(0,1]$.
\begin{proof}[Proof of Theorem \ref{WHFMAPHG} (i)]

In the spirit of Lemma \ref{MRT}, we also note that, again with the help of Theorem \ref{2sidedexit}, we have, for $u>0$, 
\begin{eqnarray}
\lefteqn{
\lim_{a\to\infty }\mathbf{P}_{0,i}(H^{+}(T_{a})-a\leq u  ;J^{+}(T_{a})=1)}\notag\\
&&=\mathbb{P}(X_{\tau^+_1} \leq {\rm e}^u; \tau^+_1<\tau^-_{-1})\notag\\
%&=&\mathbb{P}(X_{\tau^+_1} -1\leq {\rm e}^u-1; \tau^+_1<\tau^-_{-1})\\
%&=&\mathbb{P}_1(X_{\tau^+_2}-2 \leq {\rm e}^u-1; \tau^+_2<\tau^-_{0})\\
&&=\mathbb{P}_\frac{1}{2}(X_{\tau^+_1}-1 \leq \frac{1}{2}({\rm e}^u-1); \tau^+_1<\tau^-_{0})\notag\\
&&=\frac{\sin(\pi\alpha\rho)}{\pi} \left(\frac{1}{2}\right)^\alpha\int_0^{\frac{1}{2}({\rm e}^u-1)} t^{-\alpha\rho} (1+t)^{-\alpha\hat\rho} (t+{1}/{2})^{-1}\d t,\label{lastsection}
\end{eqnarray}
where we understand $\mathbf{P}_{0,i}$ be the law of the MAP with matrix exponent $\boldsymbol{F}$ issued from $(0,i)$.
Moreover, it follows that, for $\lambda\geq 0$,
\[
\Theta_1(\lambda) : =\lim_{a\to\infty }\mathbf{P}_{0,i}({\rm e}^{-\lambda(H^{+}(T_{a})-a)} ;J^{+}(T_{a})=1)\\
=\frac{\sin(\pi\alpha\rho)}{\pi}\int_0^\infty {\rm e}^{-\lambda u}\frac{{\rm e}^{-\alpha u}}{ (1-{\rm e}^{-u})^{\alpha\rho}(1+{\rm e}^{-u})^{\alpha\hat\rho} }\d u.
\]

Suppose now we define 
$
f(x) = {\rm e}^{-\alpha x} (1-{\rm e}^{-x})^{-\alpha\rho}(1+{\rm e}^{-x})^{-\alpha\hat\rho} .
$
A straightforward computation shows that 
\[
f'(x)
 =- f(x)\left\{ \frac{\alpha\rho}{(1-{\rm e}^{-x})} + \frac{\alpha\hat\rho}{(1+{\rm e}^{-x})}\right\}.
\]
This will  useful in the following computation, which also uses the conclusion of Lemmas \ref{MRT} and  \ref{PhiLemma} as well as   integration by parts:
\begin{eqnarray*}
\pi_2\Lambda_{2,1}\boldsymbol{K}(\lambda)_{2,1}
%\mathbb{E}[{\rm e}^{-\lambda U^+_{2,1}}] 
&=&\Theta_1(\lambda)- {\pi_1}\frac{\Phi_1(\lambda)}{\lambda}\\
%&=&\frac{1}{\Gamma(\alpha\rho)\Gamma(1-\alpha\hat\rho)}
%\left\{
%\int_0^\infty {\rm e}^{-\lambda x} f(x)dx - \int_0^\infty \frac{(1-{\rm e}^{-\lambda x})}{\lambda}f(x)\frac{\alpha\rho }{(1-{\rm e}^{-x})}dx\right\}\\
&=&\frac{\sin(\pi\alpha\rho)}{\pi}\left\{
 - \int_0^\infty \frac{(1-{\rm e}^{-\lambda x})}{\lambda}f'(x)\d x
-\int_0^\infty \frac{(1-{\rm e}^{-\lambda x})}{\lambda}f(x)\frac{\alpha\rho }{(1-{\rm e}^{-x})}\d x\right\}\\
%&=&\alpha\rhohat\frac{\sin(\pi\alpha\rhohat)}{\pi}\int_0^\infty \frac{(1-{\rm e}^{-\lambda x})}{\lambda }\frac{{\rm e}^{-\alpha x}}{ (1-{\rm e}^{-x})^{\alpha\rho}(1+{\rm e}^{-x})^{\alpha\hat\rho+1} }\d x\\
&=&%\alpha\rhohat
\frac{\sin(\pi\alpha\rho)}{\pi}\frac{\kappa_{\alpha\rho, \alpha\rhohat+1}(\lambda)}{\lambda},%\qquad \lambda \geq 0.
\end{eqnarray*}
for $\lambda\geq 0$.

Summarising  the above computations, as well as the statement of Lemma \ref{PhiLemma}, we have that, up to the multiplicative constant 
$(\sin(\pi\alpha\rho)+\sin(\pi\alpha\rhohat))/\pi$,
\[
\Phi_1(\lambda) =% \alpha\rho
\kappa_{\alpha\rho+1, \alpha\rhohat}(\lambda)\text{ and }\Lambda_{2,1}\boldsymbol{K}(\lambda)_{2,1} = %\alpha\rhohat
\frac{\sin(\pi\alpha\rho)}{\sin(\pi\alpha\rhohat)}\frac{\kappa_{\alpha\rho, \alpha\rhohat+1}(\lambda)}{\lambda}, \qquad \lambda\geq 0,
\]
and hence, again up to the same multiplicative constant, %  $(\sin(\pi\alpha\rho)+\sin(\pi\alpha\rhohat))/\pi$,
\[
%\Lambda_{1,2} = %\alpha\rho
%\frac{\sin(\pi\alpha\rhohat)}{\sin(\pi\alpha\rhohat)}\kappa'_{\rho}(0+)\mbox{ and }
\Lambda_{2,1} = %\alpha\rhohat
\frac{\sin(\pi\alpha\rho)}{\sin(\pi\alpha\rhohat)}\kappa'_{\alpha\rho, \alpha\rhohat + 1}(0+).
\]
By exchanging the roles of $\rho$ and $\rhohat$, we similarly get expressions for $\Phi_2(\lambda)$, $\Lambda_{1,2}\boldsymbol{K}(\lambda)_{1,2}$  and $\Lambda_{1,2}$.
Putting the pieces together into (\ref{MAPBernstein}) we have the required form for $\boldsymbol{\kappa}(\lambda)$.
%All together, we have, up to the multiplicative factor $(\sin\alpha\pi\rhohat + \sin\alpha\pi\rho)/\pi$,  the matrix Wiener-Hopf factor
%\[
%\boldsymbol{\kappa}(\lambda)= \scriptsize
%{\left(
%\begin{array}{cc}
%\alpha\rho\kappa_{\alpha\rho+1, \alpha\rhohat}(\lambda) +\alpha\rho\dfrac{\sin(\pi\alpha\rhohat)}{\sin(\pi\alpha\rhohat)}\kappa'_{\alpha\rhohat, \alpha\rho+1}(0+) & - \alpha\rho\dfrac{\sin(\pi\alpha\rhohat)}{\sin(\pi\alpha\rhohat)}\dfrac{\kappa_{\alpha\rhohat, \alpha\rho+1}(\lambda)}{\lambda}\\
%&\\
% - \alpha\rhohat\dfrac{\sin(\pi\alpha\rhohat)}{\sin(\pi\alpha\rhohat)}\dfrac{\kappa_{\alpha\rho, \alpha\rhohat+1}(\lambda)}{\lambda}&\alpha\rhohat\kappa_{\alpha\rhohat+1, \alpha\rho}(\lambda) +\alpha\rhohat\dfrac{\sin(\pi\alpha\rhohat)}{\sin(\pi\alpha\rhohat)}\kappa'_{\alpha\rho, \alpha\rhohat+1}(0+)
%\end{array}
%\right)},
%\]
%for $\lambda\geq 0$, as required.
\end{proof}

\section{The ascending MAP with $\alpha\in(1,2)$}\label{ka>1}

When $\alpha\in(1,2)$ the computations in the previous section break down as we must take account of the fact that the Lamperti--Kiu transform now only describes the stable process up to the first hitting of the origin.  The way we will deal with this is to take advantage of a trick that emerges from the Esscher transform for MAPs. 

Revisiting Proposition \ref{p:mg and com} and the proof of the Riesz--Bogdan--Zak transform in Theorem \ref{th:BZ}, let us consider the MAP corresponding to $(X,\mathbb{P}^\circ_x)$, $x\in\mathbb{R}\backslash\{0\}$. Recall that its matrix exponent, $\boldsymbol{F}^\circ(z)$,   was given by \eqref{Fcirc}. As well as being expressed via an Esscher transform of $\boldsymbol{F}(z)$ (see the proof of Theorem \ref{th:BZ}), it also enjoys a Wiener--Hopf factorisation so that 
\begin{eqnarray}
 - \boldsymbol{F}^\circ(\iu \theta) &=& \boldsymbol{\Delta}_{\boldsymbol{\pi}^\circ}^{-1}\hat{\boldsymbol{\kappa}}^\circ( \iu \theta)^{\rm T}\boldsymbol{\Delta}_{\boldsymbol{\pi}^\circ}\boldsymbol{\kappa}^\circ(-{\rm i}\theta) \notag\\
 &=& - \boldsymbol{\Delta}_{\boldsymbol{\upsilon}}(\alpha-1)^{-1} \boldsymbol{F}({\rm i}\theta +\alpha-1) \boldsymbol{\Delta}_{\boldsymbol{\upsilon}}(\alpha -1), 
 \label{2WHFs}
\end{eqnarray}
for $z\in\mathbb{R}$, where $\boldsymbol{\pi}^\circ$ is the stationary distribution associated to the underlying Markov chain of $(X,\mathbb{P}^\circ_x)$, $x\in\mathbb{R}\backslash\{0\}$, $\boldsymbol{\kappa}^\circ$ is the matrix exponent of the ascending ladder MAP of $(X,\mathbb{P}^\circ_x)$, $x\in\mathbb{R}\backslash\{0\}$ and $\hat{\boldsymbol{\kappa}}^{\circ}$ is that of its  dual. It is easily verified that 
\[
\boldsymbol{\pi}^\circ\propto\left[\begin{array}{c} \sin(\pi\alpha\rhohat)\\ \sin(\pi\alpha\rho)\end{array}\right]
\]
and hence, without loss of generality we may assume that $\boldsymbol{\pi}^\circ = \boldsymbol{\upsilon}(\alpha-1)$.  Inserting the Wiener--Hopf factorisation into the expression on the right-hand side  of (\ref{2WHFs}), in a straightforward fashion, one readily deduces that, for $\lambda\geq 0$
\begin{equation}
\boldsymbol{\kappa}(\lambda) = \boldsymbol{\Delta}_{\boldsymbol{\pi}^\circ}\boldsymbol{\kappa}^\circ(\lambda+\alpha -1) \boldsymbol{\Delta}_{\boldsymbol{\pi}^\circ}^{-1}.
\label{kappashift}
\end{equation}
This can also be verified by directly performing the Esscher transform to the process $(H, J^+)$ through an  application of Proposition \ref{p:mg and com} at the stopping time $L^{-1}_t$, $t>0$.
The reader should be careful to note that an additive shift of $\alpha-1$ in the argument of $\boldsymbol{F}$, the exponent of $(\xi, J)$, corresponds to an additive shift of $-(\alpha-1)$ in the argument of $\boldsymbol{\kappa}$ as the exponent of $(H, J^+)$ is given by $-\boldsymbol{\kappa}(-z)$.

Thanks to (\ref{kappashift}), it therefore follows that, to know $\boldsymbol{\kappa}(\lambda)$, it suffices to compute $\boldsymbol{\kappa}^\circ(\lambda)$. This is favourable on account of the fact that the origin is polar for $(X,\mathbb{P}^\circ_x)$, $x\in\mathbb{R}\backslash\{0\}$, which implies that the MAP corresponding to $\boldsymbol{F}^\circ(z)$ does not drift to $-\infty$ and hence the conditions of Lemma \ref{MRT}  are met.
As we shall soon see, in dealing with $\boldsymbol{\kappa}^\circ(\lambda)$ via this lemma, we shall make effective use of the Riesz--Bogdan--Zak transform. Using obvious notation, let us write
\[
\boldsymbol{\kappa}^\circ(\lambda)
=\text{diag}(\Phi^\circ_1(\lambda), \Phi^\circ_2(\lambda)) - \boldsymbol{\Lambda}^\circ\circ\boldsymbol{K}^\circ(\lambda), \qquad \lambda \geq 0.
\]
The analogue of Lemma \ref{PhiLemma}, but now for $\boldsymbol{\kappa}^\circ$, reads as follows.

\begin{lemma}
We have 
\[
\Phi^\circ_1(\lambda) =\frac{\sin(\pi\alpha\rho) + \sin(\pi\alpha\rhohat)}{\sin(\pi\alpha\rhohat)}{c}(\alpha)\phi_{\alpha\rho +1, \alpha\rhohat}(\lambda), \qquad \lambda \geq 0.
%\int_0^\infty (1-{\rm e}^{-\lambda w})\hat{\beta} ({\rm e}^{-w}){\rm e}^{-w}\d w, \qquad \lambda \geq 0,
\]
\end{lemma}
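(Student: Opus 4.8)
The plan is to run the argument of Lemma~\ref{PhiLemma}, but now for the ascending ladder MAP of the process $(X,\mathbb{P}^\circ_x)$, whose matrix exponent $\boldsymbol{F}^\circ$ is given in \eqref{Fcirc}. Since for $\alpha\in(1,2)$ the origin is polar for $(X,\mathbb{P}^\circ_x)$, the underlying MAP neither drifts to $-\infty$ nor is killed, so Lemma~\ref{MRT} applies to its ascending ladder process $(H^{\circ+},J^{\circ+})$ and there is no atom at infinity in $\Phi^\circ_1$. Exactly as in the passage leading to \eqref{dissleadto}, self-similarity of $(X,\mathbb{P}^\circ)$ together with the last-maximum decomposition of positive excursions lets me write $\pi^\circ_1 n^\circ_1(\epsilon(\zeta)>u,J(\zeta)=1,\zeta<\infty)$ as $-\tfrac{\d}{\d u}$ of a limiting two-sided exit quantity, reducing the whole computation (after differentiating in $y$, setting $y=z$, and integrating over $z\in(0,1)$) to the evaluation of $\lim_{x\downarrow0}\mathbb{P}^\circ_x(X_{\tau^+_y}>\mathrm{e}^u,\tau^+_y<\tau^-_{-z})$.

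The new device, replacing the direct appeal to Theorem~\ref{2sidedexit}, is the change of measure \eqref{updownCOM} with $\gamma=\alpha-1$ applied to the martingale \eqref{MAPCOM} at the stopping time $\tau^+_y$. For a positive excursion both the starting point $x$ and the first-passage position $X_{\tau^+_y}$ are positive, so the sign-dependent prefactor in \eqref{updownCOM} cancels between numerator and denominator, and optional stopping gives
\[
\mathbb{P}^\circ_x(X_{\tau^+_y}\in\d w,\,\tau^+_y<\tau^-_{-z})=\left(\frac{w}{x}\right)^{\alpha-1}\mathbb{P}_x(X_{\tau^+_y}\in\d w,\,\tau^+_y<\tau^-_{-z}\wedge\tau^{\{0\}}).
\]
The indicator $\mathbf{1}_{(\tau^+_y<\tau^{\{0\}})}$ is indispensable: under $\mathbb{P}$ with $\alpha\in(1,2)$ the origin sits in the interior of $(-z,y)$ and is hit with positive probability, and it is precisely the fact that $\abs{x}^{\alpha-1}$ is the harmonic function governing origin-avoidance that makes the origin-avoiding exit probability vanish at rate $x^{\alpha-1}$ as $x\downarrow0$, cancelling the $x^{1-\alpha}$ blow-up of the weight and leaving a finite limit.

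To evaluate the origin-avoiding exit I would split on the first visit to the origin by the strong Markov property at $\tau^{\{0\}}$:
\[
\mathbb{P}_x(X_{\tau^+_y}\in\d w,\,\tau^+_y<\tau^-_{-z}\wedge\tau^{\{0\}})=\mathbb{P}_x(X_{\tau^+_y}\in\d w,\,\tau^+_y<\tau^-_{-z})-\mathbb{P}_x(\tau^{\{0\}}<\tau^+_y\wedge\tau^-_{-z})\,\mathbb{P}_0(X_{\tau^+_y}\in\d w,\,\tau^+_y<\tau^-_{-z}),
\]
where both unconstrained laws are supplied by Theorem~\ref{2sidedexit} (scaled and translated to $(-z,y)$). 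Because the from-$x$ exit law differs from the from-$0$ law only by $O(x)=o(x^{\alpha-1})$, only the from-$0$ law survives the renormalised $x\downarrow0$ limit, and one finds
\[
\lim_{x\downarrow0}\mathbb{P}^\circ_x(X_{\tau^+_y}\in\d w,\,\tau^+_y<\tau^-_{-z})=w^{\alpha-1}\,C(y,z)\,\mathbb{P}_0(X_{\tau^+_y}\in\d w,\,\tau^+_y<\tau^-_{-z}),
\]
where $C(y,z)=\lim_{x\downarrow0}x^{1-\alpha}\mathbb{P}_x(\tau^+_y\wedge\tau^-_{-z}<\tau^{\{0\}})$ is the renormalised escape-before-hitting-the-origin constant, an explicit function of the interval. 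Feeding this into the renewal machinery, the factor $w^{\alpha-1}=(\mathrm{e}^u)^{\alpha-1}$ converts the $\mathrm{e}^{-\alpha w}$ decay seen in \eqref{inthespirit} into $\mathrm{e}^{-w}$, while the product rule in the $\tfrac{\d}{\d y}$ differentiation splits $C(y,z)\,\mathbb{P}_0(\cdots)$ into two pieces: differentiating the exit law reproduces the leading $(1-\mathrm{e}^{-u})^{-(\alpha\rho+1)}(1+\mathrm{e}^{-u})^{-\alpha\hat\rho}$ term in the density of $\phi_{\alpha\rho+1,\alpha\hat\rho}$, and differentiating $C(y,z)$ reproduces the subtracted $\tfrac{(\alpha-1)}{2}(1-\mathrm{e}^{-u})^{-\alpha\rho}(1+\mathrm{e}^{-u})^{-\alpha\hat\rho}$ term, with $c(\alpha)$ collecting the prefactors. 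A final Laplace transform in $u$, together with $\pi^\circ_1=\sin(\pi\alpha\hat\rho)/(\sin(\pi\alpha\rho)+\sin(\pi\alpha\hat\rho))$, then yields the stated identity.

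The crux, and the place where all the difficulty sits, is the origin for $\alpha\in(1,2)$: one must rigorously justify the optional-stopping identity on the origin-avoiding event, control the singular $x\downarrow0$ limit so that the $x^{\alpha-1}$ and $x^{1-\alpha}$ factors cancel, and compute the interval-dependent constant $C(y,z)$ and its $y$-derivative precisely enough to pin down both the compensating term in the definition of $\phi$ and the constant $c(\alpha)$. Once $C(y,z)$ is in hand, the remaining substitutions are a routine repetition of those in \eqref{inthespirit}.
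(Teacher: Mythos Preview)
Your framework is sound and genuinely different from the paper's route. You stay with the exit-from-$(-z,y)$ problem under $\mathbb{P}^\circ$, undo the change of measure \eqref{updownCOM} by optional stopping, and reduce everything to the renormalised escape constant
\[
C(y,z)=\lim_{x\downarrow0}x^{1-\alpha}\mathbb{P}_x(\tau^+_y\wedge\tau^-_{-z}<\tau^{\{0\}}),
\]
together with the \emph{unconstrained} exit law from Theorem~\ref{2sidedexit}. Your strong-Markov splitting and the argument that the $O(x)$ remainder is $o(x^{\alpha-1})$ are correct, and it is also correct that the product rule in the $y$-derivative naturally produces two pieces with the right structure to match the two terms in the density defining $\phi_{\alpha\rho+1,\alpha\hat\rho}$.

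The paper, by contrast, never introduces $C(y,z)$. It applies the Riesz--Bogdan--Zak transform \emph{pathwise}: the spatial inversion $x\mapsto -1/x$ sends $\mathbb{P}^\circ_x(X_{\tau^+_y}>e^u,\,\tau^+_y<\tau^-_{-z})$ directly to a first-\emph{entry} probability $\hat{\mathbb{P}}_{1/x}(X_{\tau^{(-1/z,1/y)}}\in(0,e^{-u}))$. The small-$x$ limit under $\mathbb{P}^\circ$ becomes a start-from-$\pm\infty$ limit under $\hat{\mathbb{P}}$, and the limiting entry density $\hat p_{\pm\infty}$ is read off from \cite{KPW}; this is where $c(\alpha)$ appears for free. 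The $y$-differentiation at $y=z$ then reduces to differentiating the rescaled argument of $\hat p_{\pm\infty}$, and the function $\hat\beta$ so produced is exactly the Bernstein density of $\phi_{\alpha\rho+1,\alpha\hat\rho}$.

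The practical gap in your proposal is the explicit evaluation of $C(y,z)$ and its $y$-derivative. This quantity is not in the literature the paper cites, and in fact the paper obtains the special case $C(1,1)$ only \emph{after} the present lemma, in the Corollary following Theorem~\ref{Cramer>1}, again via Riesz--Bogdan--Zak and \cite{KPW}. So to complete your route you would end up invoking the same KPW first-entry law that the paper uses directly; the paper's spatial inversion simply reaches that input in one step rather than two. Your approach is therefore valid but less economical, and you should be aware that the ``explicit function of the interval'' you appeal to is not elementary input but requires the very tool the paper is built on.
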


\begin{proof} Referring to (\ref{decompose}) and the discussion leading to (\ref{dissleadto}), we have, for $u>0$, 
\begin{eqnarray}
\lefteqn{\pi_1^\circ{n}^\circ_1(\epsilon({\zeta}) > u, J(\zeta) =1, \zeta<\infty)}&&\notag\\
&& =-\frac{\d }{\d u}\lim_{x\to0} \int_0^1{\mathbb{P}}^\circ_x\left(X_{\tau^+_1}>{\rm e}^u, \overline{X}_{\tau^+_1- }\in \d z, \,|\underline{X}_{\tau^+_1-}|<z, \tau^+_1<\tau^-_{-1} \right)\notag\\
&& =-\frac{\d }{\d u}\int_0^1\lim_{x\to0}{\mathbb{P}}^\circ_x\left(X_{\tau^+_1}>{\rm e}^u, \overline{X}_{\tau^+_1- }\in \d z, \tau^+_1<\tau^-_{-z}  \right)\notag\\
&& =-\frac{\d }{\d u}\int_0^1\lim_{x\to0}\left.\frac{\rm d}{{\rm d} y}{\mathbb{P}}^\circ_x\left(X_{\tau^+_1}>{\rm e}^u, \overline{X}_{\tau^+_1- }\leq y, \tau^+_1<\tau^-_{-z}  \right)\right|_{y=z} \d z.
\label{insert}
\end{eqnarray}
Moreover, defining, for $a<b$,
\[
\tau^{(a,b)} = \inf\{t>0 : X_t \in (a, b)\},
\]
and writing $\hat{\mathbb{P}}_x$, $x\in\mathbb{R}$, for the probabilities of $-X$ (the negative of a stable process),
 we have, for $0<x<y<1$ and $u>0$,
\begin{eqnarray}
\lefteqn{-\frac{\d}{\d u }\lim_{x\to0}{\mathbb{P}}^\circ_x\left(X_{\tau^+_1}>{\rm e}^u, \overline{X}_{\tau^+_1- }\leq y, \tau^+_1<\tau^-_{-z}  \right)}&&\notag\\
&&=-\frac{\d}{\d u }\lim_{x\to0}{\mathbb{P}}^\circ_x\left(X_{\tau^+_y}>{\rm e}^u,  \tau^+_y<\tau^-_{-z}  \right)\notag\\
&&=-\frac{\d}{\d u }\lim_{x\to0}{\mathbb P}_{-1/x}(X_{\tau^{(-1/y,1/z)} }  \in (-{\rm e}^{-u},0))\notag\\
&&=-\frac{\d}{\d u }\lim_{x\to0}\hat{\mathbb P}_{1/x}(X_{\tau^{(-1/z,1/y)} }  \in (0,{\rm e}^{-u}))\notag\\
&&= \hat{p}_{\pm\infty}\left(\frac{2yz {\rm e}^{-u} -z+y}{y+z}\right)\frac{2yz}{y+z}{\rm e}^{-u},
\label{rhsabove}
\end{eqnarray}
where, momentarily, we will  assume the limit, 
\[
\hat{p}_{\pm\infty}(\theta): = \lim_{|z|\to\infty} \hat{\mathbb{P}}_z(X_{\tau^{(-1,1)}} \in \d \theta)/ \d  \theta, \qquad \theta\in[-1,1],
\]
 exists. 
In order to continue the process of identifying an explicit expression for (\ref{rhsabove}), we appeal a distributional identity  found in \cite{KPW} which gives the law of the position of a stable process when it first enters a finite interval.
We can, in principle, take limits in the expression for  $\hat{\mathbb{P}}_z(X_{\tau^{(-1,1)}} \in \d y)$,  in particular, showing that $\hat{p}_{\pm}$ is well defined.
It turns out to be more convenient to root deeper into the proof of Theorem 1.1. of \cite{KPW} and fish out an alternative expression. From equation (20) in \cite{KPW} we have, for $y \in (-1,1)$ and  $\alpha \in (1,2)$,
\begin{eqnarray}
\hat{p}_{+\infty}(y) &:=& \lim_{z\to\infty}\hat{\mathbb{P}}_z(X_{\tau^{(-1,1)}} \in \d y)/ \d  y
\notag\\
  &&= \frac{\sin(\pi\alpha\rho)}{\pi}
    (1+y)^{-\alpha\rhohat}
    (1-y)^{-\alpha\rho} \notag\\
  &&\hspace{1cm}\times \lim_{z\to\infty}
    \biggl[
      (y+1)
      (z-1)^{\alpha\rho}
      (z+1)^{\alpha\rhohat-1}
      (z-y)^{-1} \biggr.  \notag\\
&&  \hspace{2cm} + (1-\alpha\rhohat ) 2^{\alpha-1}
      \int_0^{\frac{z-1}{z+1}} t^{\alpha\rho-1} (1-t)^{1-\alpha} \, \dd t
    \biggr]\notag\\
    &&=(1-\alpha\rhohat) 2^{\alpha-1}\frac{\sin(\pi\alpha\rho)}{\pi}
    (1+y)^{-\alpha\rhohat}
    (1-y)^{-\alpha\rho} \int_0^{1} t^{\alpha\rho-1} (1-t)^{1-\alpha} \, \dd t\notag\\
    &&= 2^{\alpha-1}(1-\alpha\rhohat) \frac{\Gamma(\alpha\rho)\Gamma(2-\alpha)}{\Gamma(2-\alpha\rhohat)\Gamma(\alpha\rho)\Gamma(1-\alpha\rho)}%\frac{\sin(\pi\alpha\rho)}{\pi}
    (1+y)^{-\alpha\rhohat}
    (1-y)^{-\alpha\rho}\notag\\
    &&= {c}(\alpha) 
    (1+y)^{-\alpha\rhohat}
    (1-y)^{-\alpha\rho},
    \label{stationary_inshoot}
\end{eqnarray}
where 
\[
{c}(\alpha) =2^{\alpha-1}\frac{\Gamma(2-\alpha)}{\Gamma(1-\alpha\rhohat)\Gamma(1-\alpha\rho)}.
\]
By appealing to duality, one also easily verifies in a similar fashion that 
\[
\hat{p}_{-\infty}(y) : = \lim_{z\to-\infty}\hat{\mathbb{P}}_{z}(X_{\tau^{(-1,1)}}\in \d y)/\d y = p_{\infty}(-y) =  \hat{p}_{+\infty}(y),
\]
 where the function $p_\infty$ is the same as $\hat{p}_\infty$ albeit the roles of $\rho$ and $\rhohat$ are interchanged. Hence we may accordingly refer to the function 
\begin{equation}
\hat{p}_{\pm\infty}(y):=\hat{p}_{+\infty}(y) = \hat{p}_{-\infty}(y)= {c}(\alpha) 
    (1+y)^{-\alpha\rhohat}
    (1-y)^{-\alpha\rho}.
    \label{stinshoot}
\end{equation}

Returning to (\ref{insert}) and (\ref{rhsabove}),
we therefore have
\begin{eqnarray*}
%\lefteqn{
\pi^\circ_1{n}^\circ_1(\epsilon({\zeta}) > u, J(\zeta) =1, \zeta<\infty) = {c}(\alpha)\int_0^1 \hat{\beta}(z {\rm e}^{-u}){\rm e}^{-u}\d z,
\end{eqnarray*}
where $\hat{\beta}(\theta) = \{\hat{p}'_{\pm\infty}(\theta)(\theta + 1) + \hat{p}_{\pm\infty}(\theta)\}/2{c}(\alpha)$, which can easily be verified to satisfy 
\begin{equation*}
%\label{betahat}
\hat{\beta}(\theta) = \frac{\alpha\rho}{(1-\theta)^{\alpha\rho + 1}(1+\theta)^{\alpha\rhohat}} - \frac{(\alpha-1)/2}{(1-\theta)^{\alpha\rho}(1+\theta)^{\alpha\rhohat}},
\end{equation*}
for $\theta\in[-1,1]$.
We can now compute 
\begin{eqnarray*}
\pi^\circ_1{\Phi}^\circ_1(\lambda) &=&{c}(\alpha)\lambda \int_0^\infty \int_0^1 {\rm e}^{-(\lambda+1) u} \hat{\beta}(z {\rm e}^{-u})
\d z\ \d u\\
&=&{c}(\alpha)\int_0^\infty (1-{\rm e}^{-\lambda w})\hat{\beta} ({\rm e}^{-w}){\rm e}^{-w}\d w, \qquad \lambda\geq 0,
\end{eqnarray*}
as required. Note that the second equality condenses two changes of variable and an application of Fubini's theorem, similar in spirit to earlier computations, into one step. The details are straightforward and left to the reader.
\end{proof}

\begin{proof}[Proof of Theorem \ref{WHFMAPHG>1} (i)]
Let ${\mathbf{P}}^\circ_{0,i}$ be the law of the MAP whose matrix exponent is $\boldsymbol{F}^\circ$, issued from $(0,i)$. Following previous reasoning, we compute, 
\begin{eqnarray}
\lim_{a\to\infty }{\mathbf{P}}^\circ_{0,i}(H^{+}(T_{a})-a\leq u  ;J^{+}(T_{a})=1 )&=
&\lim_{x\to0}{\mathbb{P}}^\circ_x(X_{\tau^+_1} \leq {\rm e}^u; \tau^+_1<\tau^-_{-1})\notag\\
&=&\lim_{x\to0}\hat{\mathbb P}_{1/x}(X_{\tau^{(-1,1)} }  \in ({\rm e}^{-u},1))\notag\\
&=&\int_{{\rm e}^{-u}}^1\hat{p}_{\pm\infty}(y)\d y.\label{rhoexchangelater}
\end{eqnarray}
Hence, for $\lambda\geq 0$,
\begin{eqnarray*}
\Theta^\circ_1(\lambda)  &:=&\lim_{a\to\infty }{\mathbf{P}}^\circ_{0,i}({\rm e}^{-\lambda(H^{+}(T_{a})-a)} ;J^{+}(T_{a})=1)=%{c}(\alpha)%\frac{\sin\pi \alpha\rhohat}{\pi} 
\int_0^\infty {\rm e}^{-(\lambda+1) u} \hat{p}_{\pm\infty}({\rm e}^{-u}) \d u,
\end{eqnarray*} 
and so, again appealing to obvious notation, from Lemma \ref{MRT},
\begin{eqnarray*}
\pi_2^\circ\Lambda^\circ_{2,1}{\boldsymbol{K}}^\circ(\lambda)_{2,1}
&=&\Theta^\circ_1(\lambda)- {\pi^\circ_1}\frac{\Phi^\circ_1(\lambda)}{\lambda}\\
&=&\int_0^\infty \frac{\left(1-{\rm e}^{-\lambda u}\right)}{\lambda}\left\{\hat{p}_{\pm\infty}'({\rm e}^{-u}){\rm e}^{-u} + \hat{p}_{\pm\infty}({\rm e}^{-u}) \right\}{\rm e}^{-u}\d u\\
&&-{c}(\alpha)\int_0^\infty \frac{(1-{\rm e}^{-\lambda u})}{\lambda}\hat{\beta} ({\rm e}^{-u}){\rm e}^{-u}\d u\\
&=&\int_0^\infty\frac{\left(1-{\rm e}^{-\lambda u}\right)}{\lambda} \frac{1}{2}\left\{\hat{p}'_{\pm\infty}({\rm e}^{-u})({\rm e}^{-u} - 1) +\hat{p}_{\pm\infty}({\rm e}^{-u}) \right\}{\rm e}^{-u}\d u\\
&=& {c}(\alpha)\frac{\phi_{\alpha\rho,\alpha\rhohat+1}(\lambda)}{\lambda}, \qquad \lambda\geq 0.
\end{eqnarray*}
This tells us that up to the multiplicative constant ${c}(\alpha)(\sin(\pi\alpha\rho) + \sin(\pi\alpha\rhohat))/{\sin(\pi\alpha\rho)\sin(\pi\alpha\rhohat)}$, for $\lambda\geq 0$,
\[
\Lambda^\circ_{2,1}{\boldsymbol{K}}^\circ(\lambda)_{2,1} = \sin(\pi\alpha\rhohat)\frac{\phi_{\alpha\rho,\alpha\rhohat+1}(\lambda)}{\lambda}\quad\text{ and }\quad\Lambda^\circ_{2,1} =\sin(\pi\alpha\rhohat){\phi_{\alpha\rho,\alpha\rhohat+1}'(0+)}.
\]
By exchanging the roles of $\rho$ and $\rhohat$, we now have enough identities to complete fill out the entries of $\boldsymbol{\kappa}(\lambda)$.
\end{proof}

\section{The ascending ladder MAP for the dual process}\label{updual}

Using (\ref{dualF}) and (\ref{pi}) a straightforward computation gives us
\begin{eqnarray*}
\hat{\boldsymbol{F}}(z)
&=& \left[
  \begin{array}{cc}
    - \dfrac{\Gamma(\alpha+z)\Gamma(1-z)}
      {\Gamma(\alpha\hat\rho+z)\Gamma(1-\alpha\hat\rho- z)}
    & \dfrac{\Gamma(\alpha+z)\Gamma(1- z)}
      {\Gamma(\alpha\hat\rho)\Gamma(1-\alpha\hat\rho)}
    \\
    &\\
    \dfrac{\Gamma(\alpha+z)\Gamma(1-z)}
      {\Gamma(\alpha\rho)\Gamma(1-\alpha\rho)}
        & - \dfrac{\Gamma(\alpha+z)\Gamma(1-z)}
      {\Gamma(\alpha\rho+z)\Gamma(1-\alpha\rho-z)}
  \end{array} 
  \right],
\end{eqnarray*}
which is well defined for $\Re(z)\in(-\alpha, 1)$. Glancing back, one quickly realises that  $\hat{\boldsymbol{F}}(z)$ takes the form of  $\boldsymbol{F}_{\alpha-1}(z)$ (from the Proof of Theorem \ref{th:BZ}), but with the roles of $\rho$ and $\hat\rho$ interchanged; otherwise written
\begin{equation}
\hat{\boldsymbol{F}}(z)  = \left.\boldsymbol{F}_{\alpha-1}(z)\right|_{\rho\leftrightarrow\rhohat} = \left.\boldsymbol{F}^\circ(z)\right|_{\rho\leftrightarrow\rhohat}
\label{exchange}
\end{equation}
for $\Re(z)\in(-\alpha,1).$
This tells us that the dual of $(\xi, J)$ is the MAP which underlies the rssMp $(X, \hat{\mathbb{P}}^\circ_x)$, $x\in\mathbb{R}\backslash\{0\}$ through the Lamperti--Kiu transform.

We may now proceed to calculate $\hat{\boldsymbol{\kappa}}(\lambda)$ by again taking advantage of the Esscher transform to remove the effects of killing. Indeed, referring to (\ref{exchange}) and noting that, without loss of generality, we may take 
\[
\boldsymbol{v}(\alpha-1)|_{\rho\leftrightarrow\rhohat} = \boldsymbol{\pi},
\]
 it is immediately clear that 
%\[
%\hat{\boldsymbol{F}}_{1-\alpha}(z): = \boldsymbol{\Delta}_{\boldsymbol v}(\alpha-1)\hat{\boldsymbol{F}}(z+1-\alpha)\boldsymbol{\Delta}_{\boldsymbol v}(\alpha)^{-1}, 
%\]
%for $\Re(z)\in(-1,\alpha)$, take the form of the matrix $\boldsymbol{F}(z)$, albeit with the roles of $\rho$ and $\rhohat$ exchanged. It follows that 
\[
\hat{\boldsymbol{\kappa}}(\lambda) = \boldsymbol{\Delta}_{\boldsymbol \pi}^{-1}\left.\boldsymbol{\kappa}(\lambda+1-\alpha)\right|_{\rho\leftrightarrow\rhohat}\boldsymbol{\Delta}_{\boldsymbol \pi}, \qquad \lambda\geq0.
\]
Referring to (\ref{kappashift}), this means, in particular that 
\begin{equation}
\hat{\boldsymbol{\kappa}}(\lambda) = \left. \boldsymbol{\kappa}^\circ(\lambda)\right|_{\rho\leftrightarrow\rhohat}, \qquad \lambda \geq 0.
\label{preremark}
\end{equation}
That is to say  $\hat{\boldsymbol{\kappa}}(\lambda)$ is nothing more than $\boldsymbol{\kappa}^\circ(\lambda)$ albeit with the roles of $\rho$ and $\rhohat$ interchanged. 

\begin{remark}\rm\label{absorbedat0}
Whist (\ref{preremark}) is helpful for computing $\hat{\boldsymbol{\kappa}}$ in the case that $\alpha\in(1,2)$, it also tells us something about $\boldsymbol{\kappa}^\circ$ in the case that $\alpha\in(0,1)$, where $\boldsymbol{\kappa}^\circ$ has not already been evaluated. For the latter regime of $\alpha$, appealing to  Theorem \ref{WHFMAPHG} (ii), we note  that  $\hat{\boldsymbol{\kappa}}(0)_{1,1}>- \hat{\boldsymbol{\kappa}}(0)_{1,2}$ and, similarly, $\hat{\boldsymbol{\kappa}}(0)_{2,2}>- \hat{\boldsymbol{\kappa}}(0)_{2,1}$. This implies that the ascending ladder MAP associated to $(X, \hat{\mathbb{P}}^\circ_x)$, $x\in\mathbb{R}\backslash\{0\}$, and hence $(X, {\mathbb P}^\circ_x)$, $x\in\mathbb{R}\backslash\{0\}$, is subject to killing. In turn, this means   that, when $\alpha\in(0,1)$, the MAP associated to $\boldsymbol{F}^\circ(z)$ drifts to $-\infty$ and, accordingly,  $(X, {\mathbb P}^\circ_x)$, $x\in\mathbb{R}\backslash\{0\}$, is a real-valued self-similar Markov process which experiences absorption at the origin.

In this sense, when $\alpha\in(0,1)$, $(X, {\mathbb P}^\circ_x)$, $x\in\mathbb{R}\backslash\{0\}$ may be reasonably named the stable process conditioned to be continuously absorbed at the origin. Indeed, in further work following ideas of \cite{CPR}, we hope to give more mathematical substance to this remark.
\end{remark}

\section{Cram\'er-type results for Lamperti-stable MAPs}\label{cramer}

%One of the interesting things that emerges from the proof of the factorisation identities in Theorems \ref{WHFMAPHG} and \ref{WHFMAPHG>1} is the use of the Riesz--Bogdan-Zak transform in analysing first passage problems for the Lamperti-stable MAP. 

A classical computation that emerges from the Wiener--Hopf factorisation in the setting of a L\'evy process that drifts to $-\infty$, which also has  a non-zero root of its characteristic exponent, is Cram\'er's asymptotic estimate for the probability of first passage above a threshold, as well as the asymptotic conditional overshoot distribution conditional of first passage. See for example \cite{BD1994}. In the current setting we have noted that when $\alpha\in(1,2)$ and when $\alpha\in(0,1)$ the  ascending ladder processes of the Lamperti-stable MAP and and the ascending ladder processes of the dual Lamperti-stable MAP, respectively, undergo killing. Moreover, we have also noted the existence of roots to the leading eigenvalue of the associated matrix exponent. 
This means that we can expect to see Cram\'er-type results in each of these regimes. In this respect, we have two main theorems in this section.

\begin{theorem}\label{Cramer>1} When $\alpha\in(0,1]$, $\mathbf{P}_{0,i}(T_a<\infty )=1$,  for $i=1,2$,
where as, when $\alpha\in(1,2)$,
\[
\lim_{a\to\infty}{\rm e}^{(\alpha - 1)a}\mathbf{P}_{0,i}(T_a<\infty ) = 2^{\alpha-1}\frac{\Gamma(2-\alpha)}{\Gamma(1-\alpha\rhohat)\Gamma(1-\alpha\rho)
}\left\{\frac{\pi}{\sin(\pi\alpha\rho)}\mathbf{1}_{(i=1)} + \frac{\pi}{\sin(\pi\alpha\rhohat)}\mathbf{1}_{(i=2)}\right\}.
\]
Moreover, when $\alpha\in(0,2)$, for $i = 1,2$ and $u>0$,
\begin{eqnarray*}
\lefteqn{\lim_{a\to\infty }\mathbf{P}_{0,i}(H^+T_{a})-a\in \d u  ;J^{+}(T_{a})=j | T_a<\infty)}&&\\
&&=\left\{\begin{array}{ll}
\dfrac{\sin(\pi\alpha\rho)}{\pi}{\rm e}^{-\alpha u}   (1+{\rm e}^{-u})^{-\alpha\rhohat}
    (1-{\rm e}^{-u})^{-\alpha\rho}\d u &\text{ if } j = 1,\\
    &\\
    \dfrac{\sin(\pi\alpha\rhohat)}{\pi}{\rm e}^{-\alpha u}   (1+{\rm e}^{-u})^{-\alpha\rho}
    (1-{\rm e}^{-u})^{-\alpha\rhohat}\d u & \text{ if }j =2.
        \end{array}
    \right.
    \end{eqnarray*}
\end{theorem}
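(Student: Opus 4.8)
The plan is to handle the three assertions in turn, using the Esscher change of measure of Proposition \ref{p:mg and com} at the Cram\'er root $\gamma=\alpha-1$ as the common device. I begin with $\alpha\in(0,1]$. As recorded at the start of Section \ref{smalla}, points are polar and $\limsup_{t\to\infty}\xi(t)=\infty$, so the ascending ladder MAP $(H^+,J^+)$ carries no killing ($L_\infty=\infty$); hence $H^+(t)\to\infty$ and $T_a<\infty$ almost surely for every $a>0$, giving $\mathbf{P}_{0,i}(T_a<\infty)=1$ for $i=1,2$.

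For $\alpha\in(1,2)$ the ladder MAP is killed, and the idea is to strip the killing by passing to the circle process $(X,\mathbb{P}^\circ_x)$, whose MAP $\boldsymbol{F}^\circ$ in \eqref{Fcirc} does not drift to $-\infty$ (the origin being polar for it) and whose ladder MAP is therefore unkilled. Recall from the proof of Theorem \ref{th:BZ} that $\chi(\alpha-1)=0$ and $\boldsymbol{v}(\alpha-1)\propto(\sin(\pi\alpha\rhohat),\sin(\pi\alpha\rho))^{\rm T}$, so that \eqref{MAPCOM} with $\gamma=\alpha-1$ is precisely the passage to $\mathbb{P}^\circ$. Writing $\tau^+_a=\inf\{s:\xi(s)>a\}$ for the first passage of the underlying MAP, on $\{\tau^+_a<\infty\}$ one has $\xi(\tau^+_a)=H^+(T_a)$ and $J(\tau^+_a)=J^+(T_a)$; applying the change of measure at $\tau^+_a$, reciprocating the unit-mean martingale, and using that $\tau^+_a<\infty$ holds $\mathbb{P}^\circ$-almost surely, I obtain
\[
e^{(\alpha-1)a}\,\mathbf{P}_{0,i}(T_a<\infty)=v_i\,\mathbf{E}^\circ_{0,i}\left[e^{-(\alpha-1)(H^+(T_a)-a)}\frac{1}{v_{J^+(T_a)}}\right].
\]
Letting $a\to\infty$, Lemma \ref{MRT} applied to the circle process shows the overshoot pair $(H^+(T_a)-a,J^+(T_a))$ converges in law to its stationary version, so the right-hand expectation tends to $v_i\sum_{j}v_j^{-1}\Theta^\circ_j(\alpha-1)$, where $\Theta^\circ_j(\lambda)$ is the overshoot Laplace transform computed in the proof of Theorem \ref{WHFMAPHG>1}(i) from \eqref{stinshoot}. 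Evaluating $\Theta^\circ_1(\alpha-1)=c(\alpha)\int_0^1 t^{\alpha-1}(1+t)^{-\alpha\rhohat}(1-t)^{-\alpha\rho}\,\d t$ together with its $\rho\leftrightarrow\rhohat$ counterpart as Beta-type integrals and simplifying with the reflection formula produces the stated limits, with $c(\alpha)=2^{\alpha-1}\Gamma(2-\alpha)/(\Gamma(1-\alpha\rhohat)\Gamma(1-\alpha\rho))$.

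For the conditional overshoot I retain the overshoot and terminal state. When $\alpha\in(0,1]$ no conditioning is needed, since $T_a<\infty$ surely, and the density is read directly off the Laplace transform $\Theta_1(\lambda)$ from the proof of Theorem \ref{WHFMAPHG}(i), which is exactly the $j=1$ expression (the $j=2$ case follows by exchanging $\rho\leftrightarrow\rhohat$). When $\alpha\in(1,2)$, the same reciprocated change of measure gives
\[
e^{(\alpha-1)a}\,\mathbf{P}_{0,i}(H^+(T_a)-a\in\d u,J^+(T_a)=j,T_a<\infty)=\frac{v_i}{v_j}\,e^{-(\alpha-1)u}\,\mathbf{P}^\circ_{0,i}(H^+(T_a)-a\in\d u,J^+(T_a)=j),
\]
and as $a\to\infty$ the circle overshoot density converges (for $j=1$) to $e^{-u}\hat p_{\pm\infty}(e^{-u})=c(\alpha)e^{-u}(1+e^{-u})^{-\alpha\rhohat}(1-e^{-u})^{-\alpha\rho}$. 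Since $e^{-(\alpha-1)u}e^{-u}=e^{-\alpha u}$, dividing by the normalising constant of the previous paragraph (which cancels the factor $v_i$) yields exactly the claimed density, which is strikingly identical in form to the $\alpha\in(0,1]$ case.

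The main obstacle is analytic rather than structural: justifying the change of measure at the unbounded stopping time $\tau^+_a$ and, above all, interchanging the limit $a\to\infty$ with the expectation. Since $\alpha-1>0$ the weight $e^{-(\alpha-1)(H^+(T_a)-a)}$ is bounded by one, so the interchange should follow from the weak convergence of the overshoot law in Lemma \ref{MRT} supplemented by uniform integrability, or more cleanly by routing everything through the already-computed Laplace transforms $\Theta^\circ_j$ and invoking their convergence directly. One must also check that Lemma \ref{MRT} genuinely applies to the circle process — it does, its ladder MAP being unkilled with finite mean $\mathbf{E}_{0,\pi}(H^+(1))$ — and track the normalisations of $\boldsymbol{v}(\alpha-1)$ and $\boldsymbol{\pi}^\circ$ consistently so that the constants collapse to the asserted form.
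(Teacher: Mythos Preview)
Your approach is correct and reaches the same endpoint as the paper, but by a genuinely different path. The paper works throughout at the stable-process level: it rewrites $\mathbf{P}_{0,1}(T_a<\infty)$ as the probability that the stable process issued from $e^{-a}$ exits $(-1,1)$ before hitting zero, then invokes the full Riesz--Bogdan--\.Zak spatial inversion (Theorem~\ref{th:BZ}) to convert this into a first-entry problem into $(-1,1)$ from infinity, and finally reads off the limit from the explicit entry density $\hat p_{\pm\infty}$ of \eqref{stinshoot}; the conditional overshoot for $\alpha\in(1,2)$ is handled the same way in \eqref{**}. You instead stay at the MAP level and run the classical Cram\'er--Lundberg argument: the Esscher tilt at $\gamma=\alpha-1$ removes the killing, reciprocating the Wald martingale expresses the original probability as a bounded $\mathbf{P}^\circ$-expectation of the overshoot, and Lemma~\ref{MRT} delivers the limit via the already-computed $\Theta^\circ_j$. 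The two routes are two faces of the same coin --- Theorem~\ref{th:BZ} \emph{is} the Esscher transform seen through the Lamperti--Kiu lens --- but your framing makes the Cram\'er structure transparent and unifies the two regimes of $\alpha$ cleanly, whereas the paper's buys a direct evaluation of the constant through a concrete entry distribution and avoids any optional-stopping justification for the measure change at $\tau^+_a$.

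One caution on the analytic step you flag lightly: the collapse $\sin(\pi\alpha\rho)\,\Theta^\circ_1(\alpha-1)+\sin(\pi\alpha\rhohat)\,\Theta^\circ_2(\alpha-1)=c(\alpha)\pi$ is not just ``the reflection formula'' --- it is a nontrivial ${}_2\mathcal{F}_1$ summation identity (the paper appeals to \cite{wolfram} for it, and needs the companion identity again in \eqref{hatconstant}). Your sketch is otherwise sound: the optional-stopping issue at $\tau^+_a$ is handled by truncating at $\tau^+_a\wedge t$ and letting $t\to\infty$ via monotone convergence (since $\tau^+_a<\infty$ under $\mathbf{P}^\circ$), and the interchange of limit and expectation is legitimate because $e^{-(\alpha-1)u}/v_j$ is bounded and continuous and Lemma~\ref{MRT} gives weak convergence of the overshoot pair.
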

\begin{proof} The first claim follows by virtue of the fact that, as noted in Section \ref{smalla}, the ascending ladder MAP $(H, J^+)$ experiences no killing when $\alpha\in(0,1]$.
%Define, for $x\in\mathbb{R}\backslash\{0\}$,
%\[
%s(x) = 2\sin(\pi\alpha\rho)\mathbf{1}_{(x>0)} + 2\sin(\pi\alpha\rhohat)\mathbf{1}_{(x<0)}
%\] 
Recall that 
\[
\tau^{(-1,1)} := \inf\{t>0: X_t \in (-1,1)\}.
\]
Thanks to (\ref{scaleit}) and Theorem \ref{th:BZ}, 
\begin{eqnarray}
\lefteqn{\mathbf{P}_{0,1}(T_a<\infty)
}&&\hspace{-0.5cm}\notag\\
&&\hspace{-0.5cm}=\mathbb{P}_{{\rm e}^{-a}}(\tau^+_{1} \wedge \tau^-_{-1}<\tau^{\{0\}})\notag\\
&&\hspace{-0.5cm}=\mathbb{P}^\circ_{-{\rm e}^{a}}(\tau^{(-1,1)}<\infty)\notag\\
&&\hspace{-0.5cm}=\hat{\mathbb{P}}^\circ_{{\rm e}^{a}}(\tau^{(-1,1)}<\infty)\label{notatend}\\
%%&&=x^{\alpha -1}\hat{\mathbb{E}}_{1/x}\left(| X_{\tau^{(-1,1)}}|^{\alpha -1}\,;\, \tau^{(-1,1)}<\infty\right)\notag\\
&&\hspace{-0.5cm}= 
\frac{ {\rm e}^{-(\alpha -1)a}}{2\sin(\pi\alpha\rho)}\hat{\mathbb{E}}_{{\rm e}^{a}}\left(
\left(2\sin(\pi\alpha\rho)\mathbf{1}_{(X_{\tau^{(-1,1)}}>0)} +2\sin(\pi\alpha\hat\rho)\mathbf{1}_{(X_{\tau^{(-1,1)}}<0)}\right)
| X_{\tau^{(-1,1)}}|^{\alpha -1}%\mathbf{1}_{(\tau^{(-1,1)}<\infty)}
\right),\notag
\end{eqnarray}
where, for each $w\in\mathbb{R}\backslash\{0\}$, $\hat{\mathbb{P}}^\circ_{w}$  plays the role of $\mathbb{P}^\circ_w$ with  $\rho$ and $\hat\rho$  interchanged (i.e. it plays the role of $\mathbb{P}^\circ_w$  for $-X$).
Recalling the definition of the limiting distribution $\hat{p}_{\pm\infty}$ given in (\ref{stinshoot}), we thus have 
\begin{eqnarray}
\lefteqn{\lim_{a\to\infty}{\sin(\pi\alpha\rho)}{\rm e}^{(\alpha -1)a}\mathbf{P}_{0,1}(T_a<\infty)}&&\notag\\
&&={c}(\alpha)\sin(\pi\alpha\rho)
    \int_0^1 y^{\alpha -1}(1+y)^{-\alpha\rhohat}
    (1-y)^{-\alpha\rho} \d y\notag\\
   && \hspace{1cm}+{c}(\alpha)\sin(\pi\alpha\rhohat)
   \int_0^1  y^{\alpha -1}(1-y)^{-\alpha\rhohat}
    (1+y)^{-\alpha\rho} \d y\notag\\
    &&{c}(\alpha)\pi\frac{\sin(\pi\alpha\rho)}{\pi}\frac{\Gamma(\alpha)\Gamma(1-\alpha\rho)}{\Gamma(1+\alpha\rhohat)}{_{2}}\mathcal{F}_1(\alpha\rhohat,\alpha,
    \alpha\rhohat+1 ; -1)\notag\\
    &&\hspace{1cm}+{c}(\alpha)\pi\frac{\sin(\pi\alpha\rhohat)}{\pi}\frac{\Gamma(\alpha)\Gamma(1-\alpha\rhohat)}{\Gamma(1+\alpha\rho)}{_{2}}\mathcal{F}_1(\alpha\rho,\alpha,
    \alpha\rho+1 ; -1)\notag\\
    &&={c}(\alpha)\pi,
    \label{alreadybeendealtwith}
    \end{eqnarray}
where ${_2}\mathcal{F}_1(a,b,c; z)$ is the usual hypergeometric function and the final equality is a remarkable simplification which follows from one of the many identities for the aforesaid functions. See for example the first formula at the \texttt{functions.wolfram.com} webpages \cite{wolfram}. If, on the left-hand side of (\ref{alreadybeendealtwith}), we replace $\mathbf{P}_{0,1}$ by $\mathbf{P}_{0,2}$, the only thing that changes in the statement is that we must replace $\sin(\pi\alpha\rho)$ by $\sin(\pi\alpha\rhohat)$ on the left-hand side. This completes the proof of the first part of the theorem.

\bigskip

For the next part, we split the proof into the cases that $\alpha\in(0,1]$ and $\alpha\in(1,2)$. 
In the former case the result was already established in (\ref{lastsection}).
For the latter case,  appealing again to the Riesz--Bogdan--Zak transform, (\ref{notatend}), (\ref{alreadybeendealtwith}) and (\ref{stationary_inshoot}), we have, for $u>0$, 
\begin{eqnarray}
\lefteqn{\lim_{a\to\infty }\mathbf{P}_{0,1}(H^+T_{a})-a\leq u  ;J^{+}(T_{a})=1 | T_a<\infty)}&&\notag\\
%&&=\mathbb{P}(X_{\tau^+_1} \leq {\rm e}^u; \tau^+_1<\tau^-_{-1}|\tau^+_1\wedge\tau^-_{-1}<\tau^{\{0\}})\\
&&=\lim_{a\to\infty}\frac{\mathbb{P}_{{\rm e}^{-a}}(X_{\tau^+_1} \leq {\rm e}^u; \tau^+_1<\tau^-_{-1}\wedge\tau^{\{0\}})}{\mathbb{P}_{{\rm e}^{-a}}(\tau^+_1\wedge\tau^-_{-1}<\tau^{\{0\}})}\notag\\
&&=\lim_{a\to\infty}\frac{\hat{\mathbb{P}}^\circ_{{\rm e}^{a}}(X_{\tau^{(-1,1)}} \in ({\rm e}^{-u} ,1 ) \,; \,\tau^{(-1,1)}<\infty
)}{\hat{\mathbb{P}}^\circ_{{\rm e}^{a}}(\tau^{(-1,1)}<\infty)}\notag\\
&&=\frac{\sin(\pi\alpha\rho)\int_{{\rm e}^u}^1 \theta^{\alpha-1}\hat{p}_{\pm\infty}(\theta)\d \theta}{{c}(\alpha)\pi}\notag\\
&&=\frac{\sin(\pi\alpha\rho)}{ \pi}
\int_{{\rm e}^{-u}}^1\theta^{\alpha -1}   (1+\theta)^{-\alpha\rhohat}
    (1-\theta)^{-\alpha\rho}\d \theta, \qquad u\geq 0, 
    \label{**}
\end{eqnarray}
which is equivalent to the statement in the second part of the theorem when $i=1$. It turns out that the asymptotic is  unaffected when $i=2$, however the details are left to the reader to verify. We also leave it as an exercise for the reader to check that when the event $\{J^+(T_a) = 1\}$ is replaced by $\{J^+(T_a) = 2\}$ on the left-hand side of (\ref{**}), the resulting asymptotic is the same but with the roles of $\rho$ and $\rhohat$ interchanged.
\end{proof}

It is worth noting from the proof of this theorem that the methodology allows us access to new identities for stable processes with $\alpha\in(1,2)$.  For example, the following polynomial asymptotic decay for the probability that the stable processes escapes $(-1,1)$ on before hitting the origin.
\begin{cor}For $\alpha\in(1,2)$,
\[
\lim_{x\to0}s(x) x^{1-\alpha}\mathbb{P}_x (\tau^+_1\wedge\tau^-_{-1}<\tau^{\{0\}}) =2^{\alpha-1}\frac{\Gamma(2-\alpha)}{\Gamma(1-\alpha\rhohat)\Gamma(1-\alpha\rho),
}
\]
where 
\[
s(x) := \frac{\sin(\pi\alpha\rho)}{\pi}\mathbf{1}_{(x>0)} + \frac{\sin(\pi\alpha\rhohat)}{\pi}\mathbf{1}_{(x<0)}.
\]
\end{cor}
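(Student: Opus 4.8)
The plan is to read the corollary directly off the computations already assembled in the proof of Theorem \ref{Cramer>1}; the only genuine work is a change of variables together with the observation that the prefactor $s(x)$ is tailored precisely to cancel the trigonometric constants that appear there. First I would recall from \eqref{notatend} the identity
\[
\mathbf{P}_{0,1}(T_a<\infty) = \mathbb{P}_{{\rm e}^{-a}}(\tau^+_1\wedge\tau^-_{-1}<\tau^{\{0\}}),
\]
which follows from self-similarity via the Lamperti--Kiu scaling \eqref{scaleit}. Tracking the sign of the starting point through that same scaling --- the state $J(0)=2$ corresponds to $X_0<0$ and the scaling preserves the sign --- the analogue for the state $2$ reads
\[
\mathbf{P}_{0,2}(T_a<\infty) = \mathbb{P}_{-{\rm e}^{-a}}(\tau^+_1\wedge\tau^-_{-1}<\tau^{\{0\}}).
\]

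Next I would substitute $x={\rm e}^{-a}$ in the first identity and $x=-{\rm e}^{-a}$ in the second, so that in both cases $a\to\infty$ corresponds to $x\to0$ (through positive, respectively negative, values) and ${\rm e}^{(\alpha-1)a}=|x|^{1-\alpha}$. Under this dictionary, the Cram\'er asymptotic already established in \eqref{alreadybeendealtwith}, namely $\lim_{a\to\infty}\sin(\pi\alpha\rho)\,{\rm e}^{(\alpha-1)a}\mathbf{P}_{0,1}(T_a<\infty)=c(\alpha)\pi$, together with its $\rho\leftrightarrow\rhohat$ counterpart for $\mathbf{P}_{0,2}$ recorded immediately afterwards, translates into
\[
\lim_{x\to0^+}\sin(\pi\alpha\rho)\,|x|^{1-\alpha}\mathbb{P}_x(\tau^+_1\wedge\tau^-_{-1}<\tau^{\{0\}})=c(\alpha)\pi,
\]
and the same statement for $x\to0^-$ with $\sin(\pi\alpha\rho)$ replaced by $\sin(\pi\alpha\rhohat)$.

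Finally I would multiply through by $s(x)$. For $x>0$ one has $s(x)=\sin(\pi\alpha\rho)/\pi$, so the $\sin(\pi\alpha\rho)$ factors cancel and $s(x)\,|x|^{1-\alpha}\mathbb{P}_x(\cdots)\to c(\alpha)$; for $x<0$ one has $s(x)=\sin(\pi\alpha\rhohat)/\pi$ and the identical cancellation again leaves $c(\alpha)$. Recalling that
\[
c(\alpha)=2^{\alpha-1}\frac{\Gamma(2-\alpha)}{\Gamma(1-\alpha\rhohat)\Gamma(1-\alpha\rho)},
\]
the two one-sided limits coincide and equal the claimed constant, so (interpreting $x^{1-\alpha}$ as $|x|^{1-\alpha}$) the two-sided limit exists and takes the stated value. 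The computation is essentially immediate once \eqref{alreadybeendealtwith} is in hand; the only point demanding any care is the sign bookkeeping in passing from the MAP first-passage probabilities $\mathbf{P}_{0,i}(T_a<\infty)$ to the two-sided exit probabilities $\mathbb{P}_x(\cdots)$ started from $x\gtrless0$, and the matching of $x^{1-\alpha}$ with $|x|^{1-\alpha}$ so that the two-sided limit is well posed.
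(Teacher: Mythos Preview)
Your proposal is correct and is precisely the derivation the paper has in mind: the corollary is stated immediately after Theorem \ref{Cramer>1} with the remark that it follows from the proof of that theorem, and your argument simply rewrites \eqref{notatend} and \eqref{alreadybeendealtwith} (together with their $i=2$ analogues) under the substitution $x=\pm{\rm e}^{-a}$. Your observation that $x^{1-\alpha}$ must be read as $|x|^{1-\alpha}$ for the two-sided limit to be meaningful is also well taken.
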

Another  example of a new fluctuation result for stable processes is captured in the corollary immediately below. %We leave the reader to entertain themselves by deriving other are possible ones.
\begin{cor}
For $\alpha\in(1,2)$, $\theta>0$ and $x\in(0,1)$,
\begin{eqnarray*}
\lefteqn{\mathbb{P}_x(X_{\tau^+_1}-1 \in \d \theta,  \tau^+_1<\tau^-_{-1} \wedge
\tau^{\{0\}})}&&\notag\\
&&=\frac{\sin(\pi\alpha\rho)}{\pi}
(1+x)^{\alpha\rhohat}(1-x)^{\alpha\rho}(2+\theta)^{-\alpha\rhohat}\theta^{-\alpha\rho}( 1+\theta-x)^{-1}\\
&&\hspace{1cm} -(\alpha-1) \frac{\sin(\pi\alpha\rho)}{\pi}
(2+\theta)^{-\alpha\rhohat}\theta^{-\alpha\rho} (1+\theta)^{-1}x^{\alpha-1} \int_1^{1/x} (t-1)^{\alpha\rho-1} (t+1)^{\alpha\rhohat-1}\, \d t.
\end{eqnarray*}
\end{cor}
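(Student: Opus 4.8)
The plan is to separate the claimed law into the \emph{unconditioned} two-sided exit law of Theorem \ref{2sidedexit} and a correction coming from those paths that visit the origin before leaving $(-1,1)$; the correction is then read off from a single hitting probability, which I compute with the Riesz--Bogdan--Zak transform exactly as in the proof of Theorem \ref{Cramer>1}. The starting observation is that the first term on the right-hand side of the statement is \emph{literally} the second form of Theorem \ref{2sidedexit}, which makes the origin play no role. This motivates decomposing, for $x\in(0,1)$,
\[
\{\tau^+_1<\tau^-_{-1}\} = \{\tau^+_1<\tau^-_{-1}\wedge\tau^{\{0\}}\}\ \sqcup\ \{\tau^{\{0\}}<\tau^+_1<\tau^-_{-1}\},
\]
on the second of which the origin is reached at $\tau^{\{0\}}<\tau^+_1\wedge\tau^-_{-1}$, i.e.\ strictly before either endpoint of $(-1,1)$ is crossed. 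Applying the strong Markov property at $\tau^{\{0\}}$ and using $X_{\tau^{\{0\}}}=0\in(-1,1)$, the contribution of this event factorises, giving
\[
\mathbb{P}_x(X_{\tau^+_1}-1\in\d\theta,\tau^+_1<\tau^-_{-1}\wedge\tau^{\{0\}}) = \mathbb{P}_x(X_{\tau^+_1}-1\in\d\theta,\tau^+_1<\tau^-_{-1}) - q(x)\,\mathbb{P}_0(X_{\tau^+_1}-1\in\d\theta,\tau^+_1<\tau^-_{-1}),
\]
where $q(x):=\mathbb{P}_x(\tau^{\{0\}}<\tau^+_1\wedge\tau^-_{-1})$.

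Both exit laws on the right are instances of the second form of Theorem \ref{2sidedexit}, taken at $x$ and at $0$ respectively; inserting them reproduces the two displayed prefactors exactly, so the whole corollary is equivalent to the single identity
\[
q(x) = (\alpha-1)\,x^{\alpha-1}\int_1^{1/x}(t-1)^{\alpha\rho-1}(t+1)^{\alpha\hat\rho-1}\,\d t.
\]
To compute $q(x)$ I would write $q(x)=1-\mathbb{P}_x(\tau^+_1\wedge\tau^-_{-1}<\tau^{\{0\}})$ and recall from \eqref{notatend} that $\mathbb{P}_x(\tau^+_1\wedge\tau^-_{-1}<\tau^{\{0\}}) = \hat{\mathbb{P}}^\circ_{1/x}(\tau^{(-1,1)}<\infty)$. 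Undoing the conditioning with the dual form of the change of measure \eqref{updownCOM} applied at $\tau^{(-1,1)}$, and noting that a path started at $1/x>1$ must enter $(-1,1)$ before it can reach $0$ (so that $\tau^{(-1,1)}<\tau^{\{0\}}$ almost surely, while $\tau^{(-1,1)}<\infty$ almost surely under the recurrent unconditioned dual law for $\alpha\in(1,2)$), yields, exactly as in the proof of Theorem \ref{Cramer>1},
\[
\hat{\mathbb{P}}^\circ_{1/x}(\tau^{(-1,1)}<\infty) = \frac{x^{\alpha-1}}{2\sin(\pi\alpha\rho)}\hat{\mathbb{E}}_{1/x}\left[\bigl(2\sin(\pi\alpha\rho)\mathbf{1}_{(X_{\tau^{(-1,1)}}>0)}+2\sin(\pi\alpha\hat\rho)\mathbf{1}_{(X_{\tau^{(-1,1)}}<0)}\bigr)\bigl|X_{\tau^{(-1,1)}}\bigr|^{\alpha-1}\right].
\]
Into this I would substitute the \emph{finite}-$z$ entry law of \cite{KPW} (their equation (20) at $z=1/x$), rather than the $z\to\infty$ limit \eqref{stationary_inshoot} that sufficed for the Cram\'er asymptotics. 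The two constituent terms of that entry law --- the elementary boundary term and the incomplete-beta integral term --- are precisely what feed the two terms of the final answer.

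The hard part will be the closing simplification. After integrating against the weight $\mathbf{1}_{(\cdot>0)}\sin(\pi\alpha\rho)+\mathbf{1}_{(\cdot<0)}\sin(\pi\alpha\hat\rho)$ times $|\cdot|^{\alpha-1}$ and forming $1-(\,\cdot\,)$, everything must collapse onto the single beta-type integral above. Equivalently --- and this is the cleanest way to certify the cancellation --- carrying the same Riesz--Bogdan--Zak computation through on the joint law directly, without the decomposition, reduces the entire corollary to the elementary identity
\[
x^{\alpha-1}\Bigl[(1-\alpha\hat\rho)2^{\alpha-1}\int_0^{\frac{1-x}{1+x}}t^{\alpha\rho-1}(1-t)^{1-\alpha}\,\d t + (\alpha-1)\int_1^{1/x}(t-1)^{\alpha\rho-1}(t+1)^{\alpha\hat\rho-1}\,\d t\Bigr] = (1+x)^{\alpha\hat\rho-1}(1-x)^{\alpha\rho}.
\]
I would prove this by differentiating both sides in $x$ and matching, the endpoint values at $x=1$ (both equal $0$) and as $x\to0$ (both tend to $1$, the second integral contributing $\sim x^{1-\alpha}/(\alpha-1)$) fixing the constant of integration. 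This is the direct analogue of the ``remarkable simplification'' exploited in \eqref{alreadybeendealtwith}, and could alternatively be extracted from the same family of $_2\mathcal{F}_1$ identities recorded at \cite{wolfram}. All the intervening changes of variable and applications of Fubini's theorem are of the routine kind already met in the preceding sections, and I would carry them out without comment.
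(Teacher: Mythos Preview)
Your decomposition via the strong Markov property at $\tau^{\{0\}}$ is correct and does reduce the corollary to the identity $q(x)=(\alpha-1)x^{\alpha-1}\int_1^{1/x}(t-1)^{\alpha\rho-1}(t+1)^{\alpha\hat\rho-1}\,\d t$, which is itself a nice byproduct. However, the paper takes the more direct route you mention only in passing: it applies the Riesz--Bogdan--\.Zak transform immediately to the \emph{joint} law, obtaining
\[
\mathbb{P}_x(X_{\tau^+_1}-1 \in \d \theta,\ \tau^+_1<\tau^-_{-1} \wedge \tau^{\{0\}})
= x^{\alpha-1}\Bigl(\tfrac{1}{1+\theta}\Bigr)^{\alpha+1}\hat p_{1/x}\Bigl(\tfrac{1}{1+\theta}\Bigr)\,\d\theta,
\]
and then simply substitutes the finite-$z$ formula for $\hat p_z(y)$ from \cite{KPW}. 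The two terms of the KPW density map, after elementary algebra, directly onto the two terms of the stated corollary; no ``closing simplification'' or auxiliary identity is required. Your impression that the direct route ``reduces the entire corollary to the elementary identity'' is therefore off: that identity is only needed for your decomposition approach, precisely because you have split the KPW formula implicitly between the Rogozin law and $q(x)$ and must then reassemble it. What your approach buys is an independent derivation of $q(x)$ (the probability of hitting $0$ before exiting $(-1,1)$), at the cost of a genuine extra computation; the paper's approach is shorter but yields only the overshoot law.
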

\begin{proof}
 According to Theorem \ref{th:BZ}, we thus have 
\begin{eqnarray*}
\lefteqn{\mathbb{P}_x(X_{\tau^+_1}-1>\theta, \tau^+_1<\tau^-_{-1} \wedge
\tau^{\{0\}})}&&\notag\\
&&=\mathbb{P}_x(-1/X_{\tau^+_1}\in(-(1+\theta)^{-1}, 0), \tau^+_1<\tau^-_{-1} \wedge
\tau^{\{0\}})\notag\\
&&=\mathbb{P}^\circ_{-1/x}(X_{\tau^{(-1,1)}}\in (-(1+\theta)^{-1}, 0),    \tau^{(-1,1)} <\infty)\notag\\
%&&=\hat{\mathbb{P}}^\circ_{1/x}(X_{\tau^{(-1/z,1/y)}}\in (0, {\rm e}^{-u}),    \tau^{(-1/z,1/y)} <\infty)\notag\\
&&=x^{\alpha-1}\hat{\mathbb{E}}_{1/x}\Big(|X_{\tau^{(-1,1)}}|^{\alpha -1} \, ;\, X_{\tau^{(-1,1)}}\in (0, (1+\theta)^{-1})\Big),
\end{eqnarray*}
for $\theta>0$. It follows that 
\[
\mathbb{P}_x(X_{\tau^+_1}-1 \in \d \theta,  \tau^+_1<\tau^-_{-1} \wedge
\tau^{\{0\}}) =x^{\alpha-1} \left(\frac{1}{1+\theta}\right)^{\alpha+1}\hat{p}_{1/x}\left(\frac{1}{1+\theta}\right)\d \theta,
\]
where, for $x>1$ and $y\in[-1,1]$, $\hat{p}_x(y) = \hat{\mathbb{P}}_x(X_{\tau^{(-1,1)}} \in \d y)/\d y$. The latter can be found in Theorem 1.1 of \cite{KPW} and is given by 
\begin{eqnarray*}
\hat{ \mathbb{P}}_x(X_{\tau_{-1}^1} \in \d y)/\d y
 &=& \frac{\sin(\pi\alpha\rho)}{\pi}
    (x+1)^{\alpha\rhohat}
    (x-1)^{\alpha\rho}
    (1+y)^{-\alpha\rhohat}
    (1-y)^{-\alpha\rho}
    (x-y)^{-1} \\
  && {} - (\alpha-1)
    \frac{\sin(\pi\alpha\rho)}{\pi}
    (1+y)^{-\alpha\rhohat}
    (1-y)^{-\alpha\rho}
    \int_1^x (t-1)^{\alpha\rho-1} (t+1)^{\alpha\rhohat-1}\, \d t.
\end{eqnarray*}
The result now follows by straightforward algebra.
\end{proof}

\bigskip

Let us now turn our attention to the dual of the Lamperti-stable MAP when $\alpha\in(0,1)$. We denote its law by $\hat{\mathbf{P}}_{x,i}$, for $x\in\mathbb{R}$ and $i=1,2$. Recall from Section \ref{updual} that this MAP corresponds to the rssMp $(X, \hat{\mathbb{P}}^\circ_x)$, $x\in\mathbb{R}\backslash\{0\}$.
The analogue of Theorem \ref{Cramer>1} takes the following form.
\begin{theorem}\label{Cramer<1}If $\alpha\in(0,1)$,
then, for $i =1,2$, 
\[
\lim_{a\to\infty}{\rm e}^{(1-\alpha)a}\hat{\mathbf{P}}_{0,i}(T_a<\infty)  = 
\frac{2^{1-\alpha}}{\Gamma(\alpha\rho)\Gamma(\alpha\rhohat)
\Gamma(2-\alpha)}\left\{\frac{\pi}{\sin(\pi\alpha\rho)}\mathbf{1}_{(i=1)} + \frac{\pi}{\sin(\pi\alpha\rhohat)}\mathbf{1}_{(i=2)}\right\}.
\]
If $\alpha\in [1,2)$, then $\hat{\mathbf{P}}_{0,i}(T_a<\infty)  = 1$, for $i=1,2$. Moreover,  
for $\alpha\in(0,2)$, $i = 1,2$ and $u>0$,
\begin{eqnarray*}
\lefteqn{\lim_{a\to\infty }\hat{\mathbf{P}}_{0,i}(H^+T_{a})-a\in \d u  ;J^{+}(T_{a})=j | T_a<\infty)}&&\\
&&=\left\{
\begin{array}{ll}%\left\{\frac{\pi}{\sin(\pi\alpha\rho)}\mathbf{1}_{(i=1)} + \frac{\pi}{\sin(\pi\alpha\rhohat)}\mathbf{1}_{(i=2)}\right\}
 2^{\alpha-1}\dfrac{\Gamma(2-\alpha)}{\Gamma(1-\alpha\rhohat)\Gamma(1-\alpha\rho)}
{\rm e}^{- u}   (1+{\rm e}^{-u})^{-\alpha\rho}
    (1-{\rm e}^{-u})^{-\alpha\rhohat}
    \d u & 
   \text{ if } j =1\\
   &\\
   2^{\alpha-1}\dfrac{\Gamma(2-\alpha)}{\Gamma(1-\alpha\rhohat)\Gamma(1-\alpha\rho)}
{\rm e}^{- u}   (1+{\rm e}^{-u})^{-\alpha\rhohat}
    (1-{\rm e}^{-u})^{-\alpha\rho}
    \d u &
   \text{ if } j =2.
    \end{array}
    \right.
    \end{eqnarray*} 
\end{theorem}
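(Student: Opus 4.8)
I plan to mirror the proof of Theorem \ref{Cramer>1}, exploiting the identity \eqref{exchange}, which tells us that the dual Lamperti-stable MAP underlies the rssMp $(X,\hat{\mathbb P}^\circ_x)$, together with the Riesz--Bogdan--Zak transform of Theorem \ref{th:BZ}. The crucial difference is that the roles of the two regimes are now interchanged, because, reading Remark \ref{absorbedat0} with $\rho\leftrightarrow\hat\rho$, the process $(X,\hat{\mathbb P}^\circ_x)$ is absorbed at the origin when $\alpha\in(0,1)$ and avoids the origin (and is recurrent) when $\alpha\in[1,2)$. The central step is a single, clean reduction. By the Lamperti--Kiu representation and self-similarity, exactly as in \eqref{scaleit}, $\hat{\mathbf P}_{0,1}(T_a<\infty)=\hat{\mathbb P}^\circ_{{\rm e}^{-a}}(\tau^+_1\wedge\tau^-_{-1}<\tau^{\{0\}})$. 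Since the dual MAP already corresponds to the $\circ$-process, I apply Theorem \ref{th:BZ} (with $\rho\leftrightarrow\hat\rho$) directly as an equality in law: the Kelvin inversion $x\mapsto -1/x$ carries $(X,\hat{\mathbb P}^\circ_{{\rm e}^{-a}})$ onto the \emph{ordinary} $(\alpha,\hat\rho)$-stable process started from $-{\rm e}^a$, with exit of $(-1,1)$ before $\tau^{\{0\}}$ becoming entrance of $(-1,1)$. After a reflection to undo the $\rho\leftrightarrow\hat\rho$ swap,
\[
\hat{\mathbf P}_{0,1}(T_a<\infty)=\hat{\mathbb P}_{-{\rm e}^a}(\tau^{(-1,1)}<\infty)=\mathbb P_{{\rm e}^a}(\tau^{(-1,1)}<\infty).
\]
I stress that, unlike in Theorem \ref{Cramer>1}, no Esscher/change-of-measure factor appears, because the Riesz--Bogdan--Zak identity maps the conditioned process onto the ordinary one in a single step.

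With this reduction the case $\alpha\in[1,2)$ is immediate: in dimension one a strictly $\alpha$-stable process is recurrent precisely when $\alpha\geq 1$, so it enters $(-1,1)$ almost surely from any starting point, giving $\mathbb P_{{\rm e}^a}(\tau^{(-1,1)}<\infty)=1$ and hence $\hat{\mathbf P}_{0,i}(T_a<\infty)=1$ (the case $i=2$ being identical after starting the rssMp at $-{\rm e}^{-a}$). For the overshoot in this regime the conditioning is vacuous, and I track the entrance position through the same maps: $X_{\tau^+_1}\in(1,{\rm e}^u]$ with $J^+(T_a)=1$ corresponds, under Kelvin inversion and reflection, to $\mathbb P_{{\rm e}^a}(X_{\tau^{(-1,1)}}\in[{\rm e}^{-u},1))$, an entrance on the positive side. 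Letting $a\to\infty$ invokes the limiting entrance law into $(-1,1)$ from $+\infty$, which for $\alpha\in(1,2)$ is $p_{+\infty}(y)=c(\alpha)(1+y)^{-\alpha\rho}(1-y)^{-\alpha\hat\rho}$, namely \eqref{stinshoot} with $\rho$ and $\hat\rho$ interchanged (a probability density of total mass one, as one checks via a beta integral), and for $\alpha=1$ reduces to $\pi^{-1}(1-y^2)^{-1/2}$ by continuity or a direct Cauchy computation. Differentiating $\int_{{\rm e}^{-u}}^1 p_{+\infty}(y)\,\d y$ in $u$ yields the stated $j=1$ density; the case $j=2$ follows by interchanging $\rho$ and $\hat\rho$.

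When $\alpha\in(0,1)$ the process is transient and the quantitative input is the hitting density of the interval, which I extract from \cite{KPW} in the regime $\alpha\in(0,1)$ (where, in contrast to the $\alpha\in(1,2)$ formula used for Theorem \ref{Cramer>1}, the corrective integral term must be absent, since otherwise the density would fail to decay and would even go negative). Its leading behaviour as $z\to\infty$ is $z^{1-\alpha}\,\mathbb P_z(X_{\tau^{(-1,1)}}\in\d y)/\d y\to \tfrac{\sin(\pi\alpha\hat\rho)}{\pi}(1+y)^{-\alpha\rho}(1-y)^{-\alpha\hat\rho}$. Integrating over $y\in(-1,1)$ via $y=2s-1$ and the beta integral $\int_0^1 s^{-\alpha\rho}(1-s)^{-\alpha\hat\rho}\,\d s=\Gamma(1-\alpha\rho)\Gamma(1-\alpha\hat\rho)/\Gamma(2-\alpha)$, I get
\[
\lim_{a\to\infty}{\rm e}^{(1-\alpha)a}\hat{\mathbf P}_{0,1}(T_a<\infty)=\frac{\sin(\pi\alpha\hat\rho)}{\pi}\,2^{1-\alpha}\,\frac{\Gamma(1-\alpha\rho)\Gamma(1-\alpha\hat\rho)}{\Gamma(2-\alpha)},
\]
which the reflection formula $\Gamma(\alpha\hat\rho)\Gamma(1-\alpha\hat\rho)=\pi/\sin(\pi\alpha\hat\rho)$ rewrites as the claimed constant for $i=1$, with $i=2$ following by the $\rho\leftrightarrow\hat\rho$ symmetry. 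For the conditional overshoot I run the same entrance-position computation with the scaling $z^{1-\alpha}$, obtaining $\int_{{\rm e}^{-u}}^1 \tfrac{\sin(\pi\alpha\hat\rho)}{\pi}(1+y)^{-\alpha\rho}(1-y)^{-\alpha\hat\rho}\,\d y$ in the numerator, and dividing by the Cram\'er constant above; after differentiating in $u$ the factors $\sin(\pi\alpha\hat\rho)/\pi$ cancel and the beta-function constants collapse, leaving exactly $c(\alpha)\,{\rm e}^{-u}(1+{\rm e}^{-u})^{-\alpha\rho}(1-{\rm e}^{-u})^{-\alpha\hat\rho}$, the common form valid for all $\alpha\in(0,2)$.

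The main obstacle I anticipate is twofold. First, the careful bookkeeping of the composite map (self-similarity, the Kelvin inversion of Theorem \ref{th:BZ}, and the reflection exchanging $\rho$ and $\hat\rho$), so that the modulating state $J^+(T_a)=j$ is correctly matched with the sign of the entrance point into $(-1,1)$; obtaining $\hat{\mathbf P}_{0,1}(T_a<\infty)=\mathbb P_{{\rm e}^a}(\tau^{(-1,1)}<\infty)$ \emph{free of any Radon--Nikodym factor} is the key structural point, and it rests on reading off from Remark \ref{absorbedat0} that the dual MAP genuinely corresponds to the $\circ$-process rather than the ordinary one. Second, the passage to the limit in the $\alpha\in(0,1)$ case requires justifying that one may scale the \cite{KPW} hitting density by $z^{1-\alpha}$, pass to the pointwise limit, and integrate term by term, where the limiting density is singular, but integrably so since $\alpha\rho,\alpha\hat\rho<1$, at the endpoints $y=\pm1$; a dominated-convergence argument controlling the pre-limit density uniformly near the endpoints should close this gap. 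The remaining steps are the routine beta-integral and reflection-formula simplifications already illustrated in \eqref{stationary_inshoot} and \eqref{alreadybeendealtwith}.
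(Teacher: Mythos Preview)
Your argument is correct and takes a genuinely different route from the paper's proof. The paper unwinds $\hat{\mathbb P}^\circ_{{\rm e}^{-a}}(\tau^+_1\wedge\tau^-_{-1}<\infty)$ by applying the change of measure \eqref{updownCOM} directly, which produces the weight $|X_{\tau^+_1\wedge\tau^-_{-1}}|^{\alpha-1}$ against the \emph{two-sided exit} law of Theorem~\ref{2sidedexit}; the resulting integrals are summed into two ${_2}\mathcal F_1$ terms and then collapsed via a nontrivial hypergeometric identity from \cite{wolfram} to obtain the constant \eqref{hatconstant}. You instead invoke Theorem~\ref{th:BZ} at the level of process laws, so that the exit problem for $(X,\hat{\mathbb P}^\circ_{{\rm e}^{-a}})$ becomes the \emph{interval-entrance} problem $\mathbb P_{{\rm e}^a}(\tau^{(-1,1)}<\infty)$ with no Radon--Nikodym factor, and then feed in the hitting density of $(-1,1)$ from \cite{KPW}; the asymptotic reduces to a single beta integral and a reflection-formula rewrite, bypassing the hypergeometric step entirely. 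Both approaches are valid: the paper's stays within identities already developed in the text (Theorem~\ref{2sidedexit}) at the price of the ${_2}\mathcal F_1$ simplification, whereas yours is structurally cleaner but imports the $\alpha\in(0,1)$ case of the \cite{KPW} formula, which the paper only quotes explicitly for $\alpha\in(1,2)$. For the conditional overshoot the two computations again converge on the same density, yours via $p_{+\infty}$ and the paper's via the weighted two-sided exit law, with the $\alpha\in(1,2)$ case in both treatments reducing to \eqref{rhoexchangelater}.
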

\begin{proof} Appealing to (\ref{scaleit}), we start by noting that
\[
\hat{\mathbf{P}}_{0,1}(T_a<\infty) = \hat{\mathbb{P}}^\circ_{{\rm e}^{-a}}(\tau^+_1\wedge\tau^-_{-1}<\infty),
\]
and hence, by again making use of Theorem \ref{2sidedexit} and Theorem \ref{th:BZ}, we also have that, when $\alpha\in(0,1)$, 
\begin{eqnarray}
\lefteqn{\lim_{a\to\infty}
\sin(\pi\alpha\rho){\rm e}^{(1-\alpha)a}\hat{\mathbf{P}}_{0,1}(T_a<\infty) }&&\notag\\
&&=\lim_{a\to\infty}\hat{\mathbb{E}}_{{\rm e}^{-a}}\left(
\left(\sin(\pi\alpha\rho)\mathbf{1}_{(X_{\tau^+_1\wedge\tau^-_{-1}} >0)}+ \sin(\pi\alpha\rhohat)\mathbf{1}_{(X_{\tau^+_1\wedge\tau^-_{-1}} <0)}\right) |X_{\tau^+_1\wedge\tau^-_{-1}}|^{\alpha-1} 
\right)\notag\\
&&=\frac{\sin(\pi\alpha\rhohat)\sin(\pi\alpha\rho)}{\pi} \int_0^\infty {\rm e}^{(\alpha -1)u}({\rm e}^u-1)^{-\alpha\rhohat}({\rm e}^u+1)^{-\alpha\rho} \d u\notag\\
&&\hspace{0.5cm}+\frac{\sin(\pi \alpha\rhohat)\sin(\pi\alpha\rho)}{\pi} \int_0^\infty {\rm e}^{(\alpha -1)u}({\rm e}^u-1)^{-\alpha\rho}({\rm e}^u+1)^{-\alpha\rhohat} \d u\notag\\
&&=\frac{\sin(\pi\alpha\rhohat)\sin(\pi\alpha\rho)}{\pi(1-\alpha\rhohat)(1-\alpha\rho)} (1-\alpha\rho) {_2}\mathcal{F}_1(1, \alpha\rho, 2-\alpha\rhohat; -1)\notag\\
&&\hspace{0.5cm}+\frac{\sin(\pi\alpha\rhohat)\sin(\pi\alpha\rho)}{\pi(1-\alpha\rhohat)(1-\alpha\rho)} (1-\alpha\rhohat) {_2}\mathcal{F}_1(1, \alpha\rhohat, 2-\alpha\rho; -1)\notag\\
&& = \frac{\sin(\pi\alpha\rhohat)\sin(\pi\alpha\rho)}{\pi(1-\alpha\rhohat)(1-\alpha\rho)} \times 2^{1-\alpha}\frac{\Gamma(2-\alpha\rho)\Gamma(2-\alpha\rhohat)}{\Gamma(2-\alpha)}\notag\\
&&=\frac{2^{1-\alpha}\pi }{\Gamma(\alpha\rho)\Gamma(\alpha\rhohat)
\Gamma(2-\alpha)}.
\label{hatconstant}
%&&= \hat{c}(\alpha).
\end{eqnarray}
where the penultimate equality is again remarkably due to a very particular identity for hypergeometric functions; see the second formula in the \texttt{functions.wolfram.com} webpage \cite{wolfram}. If we repeat the computation with $\hat{\mathbf{P}}_{0,1}$ replaced by $\hat{\mathbf{P}}_{0,2}$, then the only other thing that changes in  (\ref{hatconstant}) is that $\sin(\pi\alpha\rho)$ is replaced by $\sin(\pi\alpha\rhohat)$ on the left-hand side. This completes the proof of the first part of the theorem. 

When $\alpha\in[1,2)$, the ascending ladder height MAP of the dual is not killed (see the discussion in in Section \ref{updual}) and hence $\hat{\mathbf{P}}_{0,i}(T_a<\infty)  = 1$, for $i=1,2$.

\bigskip

For the next part set $\alpha\in(0,1]$. Starting as we did in the proof of Theorem \ref{WHFMAPHG} (i), we note from Lemma \ref{MRT},  \eqref{hatconstant} and Theorem \ref{2sidedexit}  that
\begin{eqnarray*}
\lefteqn{\lim_{a\to\infty }\hat{\mathbf{P}}_{0,1}(H^{+}(T_{a})-a\leq u  ;J^{+}(T_{a})=1|T_a<\infty )}&&\\
&&=\lim_{a\to\infty}\hat{\mathbb{P}}^\circ_{{\rm e}^{-a}}(X_{\tau^+_1} \leq {\rm e}^u; \tau^+_1<\tau^-_{-1}|\tau^+_1\wedge\tau^-_{-1}<\infty)\\
&&=\lim_{a\to\infty}\frac{\hat{\mathbb{P}}^\circ_{{\rm e}^{-a}}(X_{\tau^+_1} -1\leq {\rm e}^u-1, \tau^+_1<\tau^-_{-1})}{\hat{\mathbb{P}}^\circ_{{\rm e}^{-a}}(\tau^+_1\wedge\tau^-_{-1}<\infty)}\\
&&=\frac{\Gamma(\alpha\rhohat)\Gamma(2-\alpha)}{\Gamma(1-\alpha\rho)}\hat{\mathbb{P}}(X_{\tau^+_1}^{\alpha -1}\,;\, X_{\tau^+_1} -1\leq {\rm e}^u-1, \tau^+_1<\tau^-_{-1})\\
&&=2^{\alpha-1}\frac{\Gamma(2-\alpha)}{\Gamma(1-\alpha\rhohat)\Gamma(1-\alpha\rho)}
\int_0^{{\rm e}^u-1} (\theta+1)^{\alpha-2} \theta^{-\alpha\rhohat} (\theta+2)^{-\alpha\rho} \d \theta.
\end{eqnarray*}
%where $c(\alpha) = 2^{\alpha-1}{\Gamma(2-\alpha)}/{\Gamma(1-\alpha\rhohat)\Gamma(1-\alpha\rho)}$. % was defined in Theorem \ref{WHFMAPHG>1}.
This is equivalent to the second statement of the theorem for $i=1$. The computation when $i=2$ can be performed similarly. Replacing the event $\{J^{+}(T_{a})=1\}$  by $\{J^{+}(T_{a})=2\}$ in the probability above, affects the final equality only exchanging the roles of $\rho$ and $\rhohat$. The details are left to the reader.

Finally, when $\alpha\in(1,2)$, the desired asymptotic can already be found in (\ref{rhoexchangelater}), as soon as one notes that $\hat{\mathbf{P}}_{0,i}$ agrees with $\mathbf{P}_{0,i}^\circ$ when the roles of $\rho$ and $\rhohat$ are exchanged.
\end{proof}

Similarly to before, one can proceed to extract further identities for the Doob $h$-transformed process $(X, \mathbb{P}^\circ_{x})$, $x\in\mathbb{R}\backslash\{0\}$, however, we leave this for the reader to amuse themselves with.

\section*{Acknowledgements}

The author would like to thank Loic Chaumont,  Alexey Kuznetsov, Victor Rivero and Weerapat Satitkanitkul for useful discussions. This work is sponsored by EPSRC grant EP/L002442/1.

%\section*{Appendix}
%The following fluctuation identity first appears in \cite{Rog} and gives the overshoot distributions in the two-sided exit problem for a stable process.
%
%\begin{theorem}
%Suppose that $X$ is a stable process as described in the introduction. Then, for $\theta\geq 0$, 
%\begin{eqnarray*}
%\lefteqn{\mathbb{P}_x(X_{\tau^+_1}-1 \in\d  \theta ;
%\tau^+_1<\tau^-_{-1})}&&\\
%&&= \frac{\sin\pi\alpha \rho}{\pi} (1-
%x)^{\alpha\rho}(1+x)^{\alpha\rhohat}
%\theta^{-\alpha\rho}(\theta+2)^{-\alpha\rhohat}(\theta+1-x)^{-1}{\d}\theta.
%\end{eqnarray*}
%\end{theorem}

\end{document}